\documentclass[12pt]{article}

\usepackage{pifont}
\usepackage{amsmath}
\usepackage[english]{babel}
\usepackage[utf8x]{inputenc}
\usepackage[T1]{fontenc}
\usepackage{float}
\usepackage{natbib}
\usepackage{mathrsfs}
\usepackage{enumerate}
\usepackage{amsfonts,multirow}
\usepackage{comment,multirow}
\usepackage{caption}
\usepackage{amsthm}

\newtheorem{theorem}{Theorem}
\newtheorem{assumption}{Assumption}
\newtheorem{lemma}{Lemma}

\newtheorem{remark}{Remark}
\def\P{\mathbb{P}}
\def\R{\mathbb{R}}

\def \logit{\text{logit }}

\DeclareMathOperator*{\argmax}{arg\,max}
\DeclareMathOperator*{\argmin}{arg\,min}

\newcommand{\blind}{1}


\usepackage{graphicx}
\usepackage[colorinlistoftodos]{todonotes}
\usepackage[colorlinks=true, allcolors=blue]{hyperref}


\addtolength{\oddsidemargin}{-.5in}%
\addtolength{\evensidemargin}{-1in}%
\addtolength{\textwidth}{1in}%
\addtolength{\textheight}{1.7in}%
\addtolength{\topmargin}{-1in}%
\setlength{\footskip}{0.2in}

\begin{document}
\def\spacingset#1{\renewcommand{\baselinestretch}%
{#1}\small\normalsize} \spacingset{1}

\if1\blind
{
  \title{\bf Constructing optimal treatment length strategies to  maximize quality-adjusted lifetimes }
%
  \author{Hao Sun\hspace{.2cm}\\
    Didi Chuxing Technology Company\\
    Ashkan Ertefaie\thanks{
    The authors gratefully acknowledge \textit{R01DA058996, R01DA048764, R61NS120240, and R33NS120240 from the National Institutes of Health}} \hspace{.2cm}\\
    Department of Biostatistics and Computational Biology, University of Rochester\\
    Luke Duttweiler \hspace{.2cm}\\
    Department of Biostatistics, Harvard T.H. Chan School of Public Health\\
    and \\
    Brent A. Johnson \\
    Department of Biostatistics and Computational Biology, University of Rochester}
  \maketitle
} \fi

\if0\blind
{
  \bigskip
  \bigskip
  \bigskip
  \begin{center}
    {\LARGE\bf Constructing optimal treatment length strategies to  maximize quality-adjusted lifetimes}
\end{center}
  \medskip
} \fi

\bigskip

\begin{abstract}
 Real-world clinical decision making is a complex process that involves
balancing the risks and benefits of treatments. Quality-adjusted lifetime is a composite outcome that combines patient quantity and quality of life, making it an attractive outcome in clinical research. We propose methods for constructing optimal treatment length strategies to maximize this outcome. Existing methods for estimating optimal treatment strategies for survival outcomes cannot be applied to a quality-adjusted lifetime due to induced informative censoring. We propose a weighted estimating equation that adjusts for both confounding and informative censoring. We also propose a nonparametric estimator of the mean counterfactual quality-adjusted lifetime survival curve under a given treatment length strategy, where the weights are estimated using an undersmoothed sieve-based estimator. We show that the estimator is asymptotically linear  and provide a data-dependent undersmoothing criterion.  We apply our method to obtain the optimal time for percutaneous endoscopic gastrostomy insertion in patients with amyotrophic lateral sclerosis.


\end{abstract}

\noindent%
{\it Keywords:} Amyotrophic lateral sclerosis, highly adaptive lasso,  informative censoring, nonprametric IPW estimator, sectional variation norm, undersmoothing.
\vfill


\newpage
\spacingset{1.9} 
\section{Introduction}
\addtolength{\textheight}{.5in}%
\subsection{Motivation}

In patient care, clinicians often navigate the delicate balance between risks and benefits, particularly in critical care, life-threatening situations, or managing incurable neurodegenerative disorders like Parkinson’s Disease, Alzheimer’s, and ALS. For example, in ALS, as functional decline progresses, patients and providers face the complex decision of whether to pursue a gastric feeding tube (PEG), which can extend survival but may also introduce side effects and reduce quality of life.

ALS is a rare but fatal neurodegenerative disease with no cure \citep{zarei2015comprehensive}. PEG tube insertion provides nutrition when oral intake becomes unsafe, potentially prolonging life. However, it can lead to complications such as infections, reduced mobility, respiratory issues, and discomfort at the insertion site. These risks increase significantly for patients in poor health, underscoring the importance of early intervention \citep{mathus1994percutaneous, leigh2003management}. Determining the optimal timing and target population for PEG remains an open question, which our article seeks to address using innovative methods.

\subsection{Optimal Treatment Strategies with Survival Outcomes}

Quality-adjusted lifetime is a composite outcome that integrates patients' quantity and quality of life \citep{goldhirsch1989costs, glasziou1990quality}. It represents discounted survival time, where the discount factor—a function of time and disease status—ranges from 0 to 1. Consequently, quality-adjusted lifetime is always less than or equal to survival time. Without censoring, patients with poor quality of life contribute to failure sets earlier on the adjusted time scale. However, in the presence of censoring, these patients may be under-represented in early failure sets due to induced informative censoring, which biases standard survival analysis approaches when quality-adjusted lifetime is the endpoint of interest \citep{gelber1989quality}.
\cite{zhao1997consistent} addressed this by proposing a consistent estimator for the distribution of quality-adjusted lifetime using a weighted estimating equation. The weights are reciprocal to the probability of observing an uncensored quality-adjusted lifetime and are estimated via the Kaplan-Meier method. Alternative approaches include regression-based methods \citep{fine2001joint, bang2002median, wang2006regression}, augmented inverse probability-weighted estimators \citep{van1999locally}, and maximum likelihood estimation of the marginal mark-and-time joint distribution \citep{huanglouis1998}.

Precision medicine further emphasizes individualized treatment strategies designed to optimize health outcomes \citep{ robins2004optimal, chakraborty2013statistical}. For survival outcomes, methods include regression-based approaches incorporating Cox’s model \citep{chen2001causal, tian2014simple}, doubly robust estimators \citep{hubbard2000nonparametric, robins2008estimation, bai2013doubly, zhao2014doubly, diaz2018targeted, cai2020one, rytgaard2022continuous}, Q-learning extensions for censored outcomes \citep{goldberg2012q}, and Kaplan-Meier-based survival estimators for treatment strategies \citep{jiang2017estimation}.  These methods, however, are not applicable to settings where the quality-adjusted lifetime is the endpoint of interest.  


\subsection{Our contribution}

We propose an estimating equation approach to estimate an optimal treatment length strategy  concerning quality-adjusted survival lifetime. Our method is applicable, for example, in settings where the interest is to construct individualized decision rules for when to stop or initiate therapy.  One of the challenges in such cases is the presence of many decision points. While in theory efficient estimators can be constructed, these estimators will rely on many nuisance parameters and the derivation of the efficient influence function will be cumbersome. Importantly,  the resulting estimator can be irregular with large biases when a subset of the nonparametriclly estimated nuisance functions fail to be consistent \citep{van2014targeted, benkeser2017doubly}. To address this, we propose two inverse probability weighting estimators: one using parametric models for weights and another allowing data-adaptive estimation of weights. The second approach achieves nonparametric efficiency under mild conditions without deriving the efficient influence function \citep{ertefaie2023nonparametric}.  Our  contributions are threefold.
 First, the proposed method addresses the induced censoring problem after transforming survival time into qualify adjusted lifetime and enables estimating an optimal treatment length strategy.
Second, it allows to estimate the nuisance parameters (i.e., weight functions) using an undersmoothed sieve-based  (i.e., data adaptive) technique while providing  root-n consistency, thereby achieving efficiency in a nonparametric model space.  Third, we provide both theoretical and finite sample undersmoothing criteria to  attain the best estimator for any given data, which can be of interest in its own right. 

Inverse probability weighting estimators with data-adaptive weights often face challenges in achieving asymptotic linearity and root-$n$ convergence. We overcome these issues by  using an undersmoothed highly adaptive lasso to estimate the weight functions, yielding an asymptotically linear estimator for the survival curve of quality-adjusted lifetime under a treatment regime. Our method is general and applies to various survival curve functions, including $t$-year survival probabilities and restricted quality-adjusted lifetime (RQAL). We focus on RQAL in simulation studies due to its relevance to the real data example.

\section{Preliminaries}\label{ch3-sec:method}

\subsection{Notation} \label{ch3-sec:data}

Assume that subjects are monitored at $K$ (finite) 
pre-specified landmarks indexed by $0 = l_0 < l_1 < l_2 <... < l_K \le \tau$, where $\tau$ is the end of follow-up or the upper limit of the support in the time-scale. We define the ALS quality-of-life score  at time $t$ as $V(t)$ and assume $V(t)$ does not change between two adjacent landmarks. We define $V(s) = 0$ as the absorbing state, which implies that $V(t) = 0$ for $t > s$; in our application, the absorbing state is death and the quality-of-life is assumed to be zero after death. The lifetime $T$ is defined: $T = \inf \{t: V(t) = 0\}$; it is recorded continuously and represents the time it takes for a subject to move from the initial state to the absorbing state. Define $\mathcal{Q}$ as a quality-of-life function mapping the state space $V(t)$ to the interval $[0, 1]$.  We further assume the functional form of $\mathcal{Q}$ is known and set to zero at the absorbing state, i.e.  $\mathcal{Q}(0) = 0$. With this notation, 
a subject's quality-adjusted lifetime is defined as
\begin{equation} \label{eq:qal surv}
    U = \int_{0}^{T} \mathcal{Q}\{V(t)\} dt.
\end{equation}
Let $s^{\ast}(x) = \inf \left[s: \int_{0}^{s} \mathcal{Q}\{V(t)\}dt \ge x \right]$ denote the minimum time (on the original scale) required to cross  the target quality-adjusted lifetime $x$. Let $Z_0$ denote a vector of baseline variables and $Z_j$ denote the updated time-dependent information at the $j$th landmark. Also, let $A_j \in \{0, 1\}, j = 1, ..., K$ denote treatment assignment indicator, and we  assume the treatment assignment is monotone that is once it is initialized or stopped, the individual will stay at the current status and no longer switch until the end of the study. We assume that the treatment assignment does not occur at baseline, i.e. $A_0 = 0$. Let $C$ denote a continuously recorded censoring time and $\tilde T=\min(T,C)$. Let $Y_j = I(\tilde T \geq l_j)$ be the at-risk indicator and realizes the value 1 if the subject is still at risk at landmark $l_j$ and 0 otherwise; $Y_0=1$ by definition. Observations after censoring or absorbing state are filled as $\emptyset$ values for simplicity. Our data consists of $n$ independent, identically distributed trajectories $\mathcal{O}  = [\{Y_j, Z_j, A_j\}_{j=0}^K,I(T<C),\tilde T]$.

 Let $T(x) = \min\{T, s^{\ast}(x)\}$, then define the censoring indicator at target time $x$ as $\Delta_c(x) = I\{C > T(x)\}$ and $\tilde T(x) =\min\{T(x), C\} $. We also define $l(x) = \operatorname*{argmax}\{j:  \tilde T(x) \ge l_j \}$ to denote the last landmark point that an individual was still at risk for a target quality-adjusted lifetime $x$.  We also define landmark based censoring indicators as $C_j(x) \in \{0, 1\}, j = 0,..., K$, where $C_j(x) = I\{\Delta_c(x)=0,\tilde T(x)<l_{j+1}\}$.  
 Overbars  with a subscript is used for cumulative information, e.g., the covariate history up to the $j$th landmark is denoted as $\bar{Z}_j = (Z_0, ..., Z_j)$.

Define the propensity score for treatment assignment $A_j, j = 1, ..., K$ as 
\begin{equation}\label{eq:trtmod}
    h_{a,j}(1\mid\bar{a}_{j-1},\bar{z}_j) = (1-A_{j-1})P(A_j = 1 \mid A_{j-1} = 0,  Y_j = 1, \bar{Z}_j = \bar{z}_j)+A_{j-1}.
\end{equation}
Note, $h_{a,j}(1\mid\bar{a}_{j-1},\bar{z}_j)=1$ if a subject has inserted PEG prior to $l_j$.
We define the censoring mechanism for $C_j(x),~j = 1,\ldots,K$ as 
\begin{equation}\label{eq:censor}
    h_{c,j,x}(1\mid \bar{a}_j, \bar{z}_j) = \{1-C_{j-1}(x)\} P\{C_j(x) = 1 \mid \bar{A}_{j} = \bar{a}_j, C_{j-1}(x) = 0, Y_j = 1, \bar{Z}_j = \bar{z}_j\}+C_{j-1}(x).
\end{equation}
Similar to $h_{a,j}(1\mid\bar{a}_{j-1},\bar{z}_j)$ above, we set $h_{c,j,x}(1\mid\bar{a}_j,\bar{z}_j)=1$ if a subject is censored prior to $l_j$.


\subsection{Causal Estimands and Estimating Equations}

The goal of our study to construct a dynamic treatment regime for PEG tube insertion that optimizes quality-adjusted lifetime. A  treatment length strategy for PEG tube insertion is a sequence of decision rules of whether  
to undergo surgery for PEG tube insertion given all available information up to that time point.  
We consider a class of decision rules 
\[\mathcal{G} = \left\{g: g^{\eta}(A_{j-1},Z_j)= I\left[(\eta^{\top}Z_j)^{1 - A_{j-1}} \ge 0\right], j = 1,...,K, \Vert \eta\Vert = 1\right\},\]
where $\Vert \cdot \Vert$ is the $L_2$ norm. Let $\bar{g}_K^\eta=\{g_1^\eta(A_{0},Z_1),\ldots,g_K^\eta(A_{K-1},Z_K)\}$. The decision rule $g^{\eta}_j \equiv g^{\eta}(A_{j-1},Z_j)$ triggers PEG tube insertion as soon as the linear score $\eta^{\top}Z_j$ crosses zero. The power $(1 - A_{j-1})$ in the exponent of the linear score guarantees that the regimes of interest belong to the class of monotone treatment regimes so that patients who have PEG tube inserted do not later have it removed and possibly wait to have it reinserted again at a later date \citep{strijbos2017percutaneous}.

Define $T^{\eta}$ and $U^{\eta}$ as the potential survival time  and the corresponding potential quality-adjusted lifetime, respectively, defined by \eqref{eq:qal surv} that would have been
observed if a patient's lifetime was uncensored during follow-up 
and had followed the strategy $\bar{g}_K^{\eta}$ 
for PEG tube insertion, 
i.e., $\{\bar{A}_K = \bar{g}_K^{\eta}\}$. We define the survivor distribution of quality-adjusted lifetime under the  treatment length strategy $\bar{g}_K^{\eta}$,  
$P(U^{\eta} > x) = S_U(x;\eta)$, and our target estimand as 
$f\{S_U(x;\eta)\}$, for some 
user-defined known functional of $S_U(x;\eta)$. 
When the goal is to maximize the $t$-year survival probability \citep{jiang2017estimation}, $f(x) = x$, and  when the target parameter is the restricted mean qualify adjusted lifetime, then 
$f\{S_U(x; \eta)\} = \int_{0}^{L_U} S_U(x; \eta) dt$  with a user defined upper limit  $L_U \leq \tau$. Naturally, estimating $S_U(x;\eta)$ consistently is 
a principal concern in the proposed method. 



Our procedure handles  confounding and dependent censoring as two sources of estimation bias. We leverage the inverse probability weighting strategy to adjust for these biases. Let $\bar{g}_K^{\eta}$ as $\Delta^{\eta}_{a}(x) = \prod_{j \le l(x)}I\{A_j = g_j^{\eta}\}$ denote the indicator function for treatment assignment at the target time $x$ under the  treatment length strategy $\bar{g}_K^{\eta}$. This indicates  whether the observed cumulative treatment assignment $\bar{A}_{l(x)}$ up to landmark $l(x)$ follows the 
strategy $\bar{g}_K^{\eta}$. 
Define the probability of following the regime $\bar{g}_K^{\eta}$ through 
landmark $l(x)$ as
$H_a(x, \bar{z}_{l(x)}) = \prod_{j \le l(x)} h_{a,j}(a\mid\bar{a}_{j-1},\bar{z}_j)$. 
We  propose the following estimating function for $S_U(x; \eta)$,
\begin{eqnarray} 
   \lefteqn{\mathbb{P}_n \left[ \frac{\Delta^{\eta}_{a}(x)\Delta_{c}(x)}{H_{a}(x)H_{c}(x)}\{I(U > x) - S_U(x; \eta)\} \right] =} \nonumber\\
    &&\mathbb{P}_n \left[\prod_{j\le l(x)}
    \frac{I\{A_j = g_j^{\eta}\}}{h_{a,j}(A_j\mid \bar{A}_{j-1},\bar{Z}_j)}
    \frac{I(C_j(x)=0)}{1 - h_{c,j,x}(1\mid\bar{Z}_j, \bar{A}_j)}
    \{I(U>x) - S_U(x;\eta)\} \right], \label{eq:direct}
\end{eqnarray}
where $H_a(x)=H_a(x,\bar{z}_{l(x)})$ and $H_c(x) = H_{c}(x,\bar{a}_{l(x)}, \bar{z}_{l(x)}) := \prod_{j \le l(x)}\{1 - h_{ c,j,x}(1\mid \bar{z}_j, \bar{a}_j)\}$ denote the propensity score and the uncensored probability up to landmark $l(x)$. 
We will discuss the methods to estimate $H_c$ and $H_a$  in Section~\ref{seb:est}.

\medskip

\begin{remark}[Extension to Quality-adjusted Lifetime]
    Mean restricted quality-adjusted lifetime is defined as the integrated quality-adjusted lifetime from zero to $L_U$, where $L_U$ is a user-defined upper-limit on the quality-adjusted lifetime scale.   Hence, 
\begin{equation}\label{eq:rqaldirect}
R(\eta) = \int_{0}^{L_U} S_U(x; \eta) dx.
\end{equation}
In our application, we set $L_U=91.12$ which is the 95\% quantile of the observed quality-adjusted lifetime. 
\end{remark}

\section{Estimation} \label{seb:est}
\subsection{Semiparametric and Nonparametric Estimators}

The estimating equation (\ref{eq:direct}) is infeasible due to unknown nuisance functions. We propose two estimators for the quality adjusted lifetime based on (\ref{eq:direct}). The first estimator relies on parametric modeling assumptions for nuisance parameters. Specifically, we define $\hat{S}_U^{par}(x;\eta)$ as a solution of (\ref{eq:direct}) where $h_{a,j}$ and $h_{c,j,x}$ are replaced with their corresponding parametric estimators  $\hat h^{par}_{a,j}$ and $\hat h^{par}_{c,j,x}$. The statistical properties of $\hat{S}_U^{par}(x;\eta)$ depend on the consistency of both of these estimators. 

The second estimator relaxes these modeling assumptions and allows one to model the nuisance functions using a nonparametric regression approach (i.e., highly adaptive lasso). We refer to this estimator as $\hat{S}_U^{npar}(x;\eta)$ with the estimated nuisance functions as $\hat h^{npar}_{a,j}$ and $\hat h^{npar}_{c,j,x}$.  Accordingly, an estimator of mean restricted quality-adjusted lifetime under regime $\bar{g}_K^{\eta}$ is
\begin{equation}\label{eq:rqaldirect}
\hat{R}^{\diamond} (\eta) = \int_{0}^{L_U} \hat{S}_U^{\diamond}(x; \eta) dx,
\end{equation}
where $\diamond \in \{par,npar\}$. 
Accordingly,  the optimal dynamic regime to maximize mean restricted quality-adjusted lifetime is therefore $\bar{g}_K^{\hat{\eta}_{opt}^{\diamond}}$ where $\hat{\eta}_{opt}^{\diamond}$ is the maximizer of $\hat{R}^{\diamond}(\eta)$.

\subsection{Finite Sample Bias Correction}

In Section~\ref{ch3-sec:theory}, we show that, under certain conditions, $\hat{R}^\diamond(\eta)$ is consistent for $R(\eta)$ for any given $\eta$.
 However, when the optimal treatment length strategy is estimated using data, $\hat{R}^\diamond(\hat{\eta}_{opt}^\diamond) \ge \hat{R}^\diamond(\eta_{opt}) \rightarrow R(\eta_{opt})$, which means that $\hat{R}^\diamond(\hat{\eta}_{opt}^\diamond) \rightarrow R(\eta_{opt})$ from above as $n \rightarrow \infty$, but $\hat{R}^\diamond(\hat{\eta}_{opt}^\diamond) \ge \hat{R}^\diamond(\eta_{opt})$ with finite sample size and $\hat{R}^\diamond(\hat{\eta}_{opt}^\diamond)$ would slightly overestimate $R(\eta_{opt})$. Such bias has no impact for the estimation of an optimal strategy, but may lead to insufficient coverage rate when the statistical inference of $R(\eta_{opt})$ is of interest. To mitigate this issue, we construct a smooth estimator to approximate the inverse probability weighting estimator  \citep{jiang2017estimation}.  Asymptotically, we can replace $g_j^{\eta} = I\left\{(\eta^{\top}Z_j)^{1 - A_{j-1}} > 0\right\}$ with $\Phi\left\{(\eta^{\top}Z_j)^{1 - A_{j-1}}/\nu\right\}$, where $\Phi(\cdot)$ is the standard normal distribution function and $\nu$ is a bandwidth tending to 0 as $n \rightarrow \infty$. When $\nu \rightarrow 0$, the normal distribution function obviously converges to the indicator function. The indicator function of treatment following $I(A_j = g_j^{\eta})$ can thus be replaced by $W_{A_j}^\eta = A_j \times \Phi\left\{(\eta^{\top}Z_j)^{1 - A_{j-1}}/\nu\right\} + (1 - A_j) \times \left[1 - \Phi\left\{(\eta^{\top}Z_j)^{1 - A_{j-1}}/\nu\right\}\right]$,
and thus, $\Delta^{\eta}_{a}(x)=\Pi_{j \le l(x)}I(A_j = g_j^{\eta})$, a product of indicator function, can be replaced with 
$\tilde\Delta_a^\eta(x) = \Pi_{j \le l(x)} W_{A_j}^\eta$.
Finally, with this modification, we redefine the estimating equation \eqref{eq:direct} at time $x$ as 
\begin{equation}\label{eq:smooth}
      \mathbb{P}_n \left[\frac{ \tilde\Delta_{a}^\eta(x)\Delta_{c}(x)}{H_{a}^\diamond(x)H_{c}^\diamond(x)}\{I(U > x) - S_U(x; \eta)\} \right]
\end{equation}
and denote the solution of \eqref{eq:smooth} as $\tilde{S}_U^\diamond(x;\eta)$. This is a normal approximation of indicator function which is used to correct the finite sample bias \citep{yang2017asymptotic}. We recommend and use $\nu = n^{-1/3}sd(\mathcal{Z})/K$, where $\mathcal{Z} = \{\eta^{\top}Z_j, j = 1, ..., K\}$, based on our empirical studies. Here, $sd(\mathcal{Z})$ is the sample standard deviation of the linear predictor and $K$ is the number of stages in the study. 
The extension to  restricted quality-adjusted lifetime  is straightforward by combining \eqref{eq:rqaldirect} and \eqref{eq:smooth}. Accordingly, define $\tilde{\eta}_{opt}^\diamond = \argmax_{\eta} \tilde{R}^\diamond(\eta)$ where
\begin{align} \label{eq:rqalsmooth}
    \begin{split}
        \tilde{R}^\diamond(\eta) = \int_{0}^{L_U} \tilde{S}_U^\diamond(x; \eta) dx.
    \end{split}
\end{align}

\subsection{Highly Adaptive Lasso Estimator} \label{ch3-sec:hal}

The highly adaptive lasso estimator is a data-adaptive modeling technique which is shown to be
consistent for the true regression function with a convergence rate faster than $n^{-1/3}$, up to a log factor 
\citep{benkeser2016highly, bibaut2019fast}. 
Highly adaptive lasso offers a flexible tool for estimation of nuisance parameters. 

Let $d_j$ denote the covariate dimension at time $j$ and $s \subset (1, ..., d_j)$. The first step to map any subset $s$ of the covariate vector to  indicator basis functions. For example, when the covariate is two dimensional $X = (X_1, X_2)$, $\phi_i(x) = \{\phi_{s=X_1,i}(x), \phi_{s=X_2,i}(x), \phi_{s=(X_1, X_2),i}(x)\} = \{I(x_1 \ge X_{i1}), I(x_2 \ge X_{i2}), I(x_1 \ge X_{i1}, x_2 \ge X_{i2})\}, i = 1,...,n,$ is the collection of the first- and the second-order indicator basis functions. Then, for each $j$, a regression model is fit (e.g., with logit link) given these set of basis functions denoted as $\phi_{j,s,i}$. Denote the corresponding parameters as $\beta_{j,s,i}$, then the propensity score can be represented as  
$
 \text{logit } h_{a,j,\beta} = \beta^a_{j,0}+\sum_{s \subset\{1,\ldots,d_j\}}\sum_{i=1}^{n} \beta^a_{j,s,i} \phi_{s,i},
$ 
where $ |\beta^a_{j,0}|+\sum_{s \subset\{1,\ldots,d_j\}}\sum_{i=1}^{n} |\beta^a_{j,s,i}|$ is an approximation of the sectional variation norm of $\logit h_{a,j}$. The highly adaptive lasso defines a minimum loss based estimator $\hat \beta^a_j$  as
\[
\hat \beta^a_j= \arg \min_{\beta: |\beta^a_{j,0}|+\sum_{s \in \mathcal{T}}\sum_{i=1}^{n} |\beta^a_{j,s,i}|<\lambda} \mathbb{P}_n L(\text{logit } h_{a,j,\beta}),
\]
where $L(\cdot)$ is the negative log-likelihood loss function, $\mathbb{P}_n$ is the empirical average and $\lambda$ is the sectional variation norm of $\logit h_{a,j}$ which can be obtained using cross-validation \citep{benkeser2016highly}. We denote the estimated propensity score at time $j$ as $\hat h_{a,j,\hat \beta^a}^{npar}\equiv \hat h_{a,j}^{npar}$. Similarly, the parameters associated with the censoring mechanism can be estimated, yielding $h_{c,j,x,\hat \beta^c}^{npar}\equiv \hat h_{c,j,x}^{npar}$. Additional details on the highly adaptive lasso are provided in Section \ref{app:hal} of the Supplementary Material.

\subsection{ Undersmoothing  in practice} \label{sec:undercri}

We generalize a  data adaptive undersmoothing criterion of \cite{ertefaie2023nonparametric} to our longitudinal settings. 
Define, for $j=1,2,\cdots,l(x)$,
\begin{equation}\label{eq:score}
 \tilde \lambda_{n,j}^a = \argmin_{\lambda}  B^{-1} \sum_{b=1}^B \left[ \sum_{(s,i) \in
 \mathcal{J}_n} \frac{1}{ \lVert \hat\beta^a_{j,\lambda,b} \rVert_{L_1}}
 \bigg\lvert \P_{n,b}^1 \tilde \Omega_{s,i}(\phi,\hat h_{a,j,\lambda,b} )
 \bigg\rvert \right],
\end{equation}
in which $\lVert \hat\beta^a_{j,\lambda,b} \rVert_{L_1} = \lvert \hat\beta^a_{j,\lambda,0}
\rvert + \sum_{s \subset\{1, \ldots, d_j\}} \sum_{i=1}^{n} \lvert
\hat \beta^a_{j,\lambda,s,i} \rvert$ is the $L_1$-norm of the coefficients
$\hat \beta^a_{j,\lambda,s,i}$ in the highly adaptive estimator $\hat h_{a,j,\lambda}$
for a given $\lambda$ in fold $b$, and $\tilde \Omega_{s,i}(\phi, h_{a,j,\lambda,b}^{npar}) =
\phi_{r,i}(\bar A_{j-1},\bar Z_{j}) \{A_{j} - \hat h_{a,j,\lambda,b}^{npar}(1 \mid \bar A_{j-1},\bar Z_{j})\}\{\hat h_{a,j,\lambda,b}^{npar}
(1 \mid \bar A_{j-1},\bar Z_{j}))\}^{-1}$. 

Undersmoothing can result in a reduced convergence rate for the highly adaptive lasso fit. Our asymptotic linearity proof of $\hat{R}^{npar}(\hat{\eta}^{opt})$ in Theorem \ref{ch3-theorem3} requires $\| \hat h_{a,j}^{npar} -h_{a,j}\|_2 = o_p(n^{-1/4})$ for all $j=1,2,\cdots,l(x)$, which is slower than the convergence rate of the cross-validated highly adaptive lasso fit, given by $O_p(n^{-1/3})$. This ensures that a certain degree of undersmoothing can be tolerated without compromising our desired asymptotic properties. Let $J$ represent the number of features included in the fit (i.e., $J=|\mathcal{J}_n|$). As shown by \cite{van2023higher}, the convergence rate of a highly adaptive lasso is $(J/n)^{1/2}$. To ensure the required $o_p(n^{-1/4})$, we must choose $J$ such that $J<n^{1/2}$, and let $\lambda^a_{n,J}$ be the corresponding $L_1$-norm. We propose our  criterion as:
\begin{align} \label{UIPWScore}
    \breve{\lambda}_{n,j}^a = \max(\lambda^a_{n,J}, \tilde \lambda_{n,j}^a).
\end{align}
In certain instances, the criterion may lead to excessive undersmoothing (i.e., selecting a very small $\tilde \lambda_{n,j}^a$), and the max operator aids in mitigating this issue. The same procedure can be implemented to obtain the undersmoothing parameter $\breve{\lambda}_{n,j}^c = \max(\lambda^c_{n,J}, \tilde \lambda_{n,j}^c)$ to fit the  function $\hat h_{c,j,\lambda^c}^{npar}$.

\section{Theoretical properties}\label{ch3-sec:theory}

\subsection{Coarsening assumptions }
The following assumptions are required for identification of the target parameters using the observed data.

\begin{assumption} (causal inference and censoring assumptions) \label{assump:basic}
\begin{enumerate}
    \item[]a. Consistency: $I(U > x) = I(U^{\eta} > x)$ if $\Delta^{\eta}_{a}(x)\Delta_{c}(x) = 1$;
    \item[]b. Sequential randomization assumption (SRA): given that a patient is still under observation at landmark $l_j$ and given the patient’s covariate history  $\bar{Z}_j, \bar{A}_{j-1}$ and $\bar{C}_{j-1}$, $A_j$ doesn't depend on future prognosis; also, given that a patient is still under observation at landmark $l_j$ and given the patient’s covariate history $\bar{Z}_j, \bar{A}_{j}$ and $\bar{C}_{j-1}$, $C_j$ doesn't depend on future prognosis;
    \item[]c. Strong positivity: $H_a(x, \bar{z}) > \alpha_1$ and $H_c(x, \bar{a}, \bar{z}) > \alpha_2$ for each $a, z$ and $\alpha_1, \alpha_2 > \epsilon$ with $\epsilon > 0$ for $j = 1,...,K$;
\end{enumerate}
\end{assumption}

Assumption \ref{assump:basic}a connects the observed outcome with the potential outcomes and validates the use of observed data to estimate the causal estimand. It also states that there is no interference between the observations. Assumption \ref{assump:basic}b assumes that there is no unmeasured confounding variable over time for treatment assignment and censoring process and is crucial to identify the mean outcome. Assumption \ref{assump:basic}c ensures that statistical inference of the treatment effect is possible.  These assumptions are commonly used in causal inference literature \citep{rosenbaum1983central,robins1986new}. 

\subsection{Parametric Estimation of the Nuisance Functions}

The first two theorems establish the results when the discrete hazard function for treatment assignment (\ref{eq:trtmod}) and censoring (\ref{eq:censor})  are correctly specified by finite dimensional parametric models, e.g., logistic models. 
\begin{theorem} \label{ch3-theorem1}
Under conditions (C.1)-(C.5) in Appendix~\ref{app-ch3}, if the functions $h_{a,j}$ and $h_{c,j,x},~j=1,\ldots,K,$ are correctly specified, for any regime $\bar{g}^{\eta}_K$, we have, as $n \rightarrow \infty$, (1)  $\hat{S}_U^{par}(x; \eta) \rightarrow_p S_U(x; \eta)$ for $x \in (0, L_U]$,
    (2) $n^{1/2} \{\hat{S}_U^{par}(x; \eta) - S_U(x; \eta)\}$ converges to a mean-zero Gaussian process,
    (3) $n^{1/2} \{\hat{S}_U^{par}(x; \hat{\eta}_{opt}^{par}) - S_U(x; \eta_{opt})\} \rightarrow_d N\{0, \sigma^2_{\eta_{opt}, 1}(x)\}$, where the expression of $ \sigma^2_{\eta_{opt}, 1}(x)$ can be found in Appendix~\ref{app-ch3},
    (4) $n^{1/2} \{\hat{S}_U^{par}(x; \hat{\eta}_{opt}^{par}) - \tilde{S}_U^{par}(x; \tilde{\eta}_{opt}^{par})\} = o_p(1)$.
\end{theorem}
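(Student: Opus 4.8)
The plan is to reduce the claim to two facts: a fixed-$\eta$ equivalence of the two estimators, and the asymptotic equivalence of the two optimizers. Because both estimating equations are linear in $S_U$, write $\hat S_U^{par}(x;\eta) = \mathbb{P}_n[\omega_\eta\, I(U>x)] / \mathbb{P}_n[\omega_\eta]$ with $\omega_\eta = \Delta^\eta_a(x)\Delta_c(x)/\{H_a^{par}(x)H_c^{par}(x)\}$, and $\tilde S_U^{par}(x;\eta)$ identically but with $\Delta^\eta_a(x) = \prod_{j\le l(x)}I\{A_j=g_j^\eta\}$ replaced by $\tilde\Delta^\eta_a(x)=\prod_{j\le l(x)}W_{A_j}^\eta$; only the treatment-following factor is smoothed. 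Using $\hat S_U^{par}(x;\hat\eta_{opt}^{par}) - \tilde S_U^{par}(x;\tilde\eta_{opt}^{par}) = \{\hat S_U^{par}(x;\hat\eta_{opt}^{par})-\tilde S_U^{par}(x;\hat\eta_{opt}^{par})\} + \{\tilde S_U^{par}(x;\hat\eta_{opt}^{par})-\tilde S_U^{par}(x;\tilde\eta_{opt}^{par})\}$, it suffices to show (i) $\sup_{\|\eta\|=1}|\hat S_U^{par}(x;\eta)-\tilde S_U^{par}(x;\eta)| = o_p(n^{-1/2})$ and (ii) the second bracket is $o_p(n^{-1/2})$.

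For (i), the core step is the bound $\sup_{\|\eta\|=1}|\mathbb{P}_n[(\omega_\eta-\tilde\omega_\eta)\,\xi]| = o_p(n^{-1/2})$ for bounded $\xi\in\{1,I(U>x)\}$. Telescoping the product and using strong positivity (Assumption~\ref{assump:basic}c) to bound the inverse weights, $|\omega_\eta-\tilde\omega_\eta|\le C\sum_{j\le l(x)}|I((\eta^\top Z_j)^{1-A_{j-1}}\ge 0) - \Phi((\eta^\top Z_j)^{1-A_{j-1}}/\nu)|$; the summands with $A_{j-1}=1$ are bounded by $e^{-1/(2\nu^2)}$ and negligible, and for $A_{j-1}=0$ the summand is $|I(\eta^\top Z_j\ge 0)-\Phi(\eta^\top Z_j/\nu)|$. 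I would then split $\mathbb{P}_n[(\omega_\eta-\tilde\omega_\eta)\xi]$ into its mean and an empirical-process remainder. For the mean, condition on the linear index $\eta^\top Z_j$ and write the conditional mean of the remaining factors times the index density as a smooth function $g_{\eta,j}$ (a Lipschitz-density condition on $\eta^\top Z_j$ near $0$, uniform over the sphere, is the regularity ingredient, implied by (C.1)--(C.5)); after the substitution $v=\nu t$ the leading term cancels exactly because $\int_{-\infty}^{0}\Phi(t)\,dt = \int_{0}^{\infty}\{1-\Phi(t)\}\,dt$, so the mean is $O(\nu^2)$. For the remainder, $\mathrm{Var}\{(\omega_\eta-\tilde\omega_\eta)\xi\}\le C\sum_j\|f_{\eta^\top Z_j}\|_\infty\,\nu\int\{I(t\ge 0)-\Phi(t)\}^2\,dt = O(\nu)$, and since $\{(\omega_\eta-\tilde\omega_\eta)\xi:\|\eta\|=1\}$ has a uniformly bounded envelope and finite uniform entropy, a maximal inequality gives $\sup_\eta|(\mathbb{P}_n-P)\{(\omega_\eta-\tilde\omega_\eta)\xi\}| = O_p(\sqrt{\nu\log n/n})$. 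With $\nu = n^{-1/3}sd(\mathcal{Z})/K$ both $\nu^2$ and $\sqrt{\nu\log n/n}$ are $o(n^{-1/2})$. Claim (i) then follows by writing $I(U>x)-\tilde S_U^{par}(x;\eta) = \{I(U>x)-S_U(x;\eta)\}+O_p(n^{-1/2})$, applying the bound to the bounded part, noting the scalar $O_p(n^{-1/2})$ part multiplies $\mathbb{P}_n[\omega_\eta-\tilde\omega_\eta]=o_p(n^{-1/2})$, together with $\mathbb{P}_n[\omega_\eta]\rightarrow_p 1$ uniformly and bounded away from $0$.

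For (ii), integrating (i) over $x\in[0,L_U]$ yields $\sup_{\|\eta\|=1}|\hat R^{par}(\eta)-\tilde R^{par}(\eta)| = o_p(n^{-1/2})$; applying the same localization to the (now differentiable) gradient of $\tilde R^{par}$ shows that the estimating equation characterizing $\tilde\eta_{opt}^{par}$ agrees, up to $o_p(n^{-1/2})$, with the (suitably smoothed) estimating equation whose asymptotic expansion for $\hat\eta_{opt}^{par}$ is obtained in the proof of Theorem~\ref{ch3-theorem1}(3). Hence $\hat\eta_{opt}^{par}$ and $\tilde\eta_{opt}^{par}$ admit the same first-order asymptotically linear expansion about $\eta_{opt}$, so $\hat\eta_{opt}^{par}-\tilde\eta_{opt}^{par}=o_p(n^{-1/2})$. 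A first-order Taylor expansion of the smooth map $\eta\mapsto\tilde S_U^{par}(x;\eta)$ about $\eta_{opt}$ then gives $\tilde S_U^{par}(x;\hat\eta_{opt}^{par})-\tilde S_U^{par}(x;\tilde\eta_{opt}^{par}) = \partial_\eta S_U(x;\eta_{opt})^\top(\hat\eta_{opt}^{par}-\tilde\eta_{opt}^{par}) + o_p(\|\hat\eta_{opt}^{par}-\tilde\eta_{opt}^{par}\|)$, with $\partial_\eta\tilde S_U^{par}(x;\eta_{opt})\rightarrow_p\partial_\eta S_U(x;\eta_{opt})$ because the kernel $\phi(\cdot/\nu)/\nu$ concentrates at $0$; this is $o_p(n^{-1/2})$.

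The main obstacle is the argmax-shift term (ii): since $\hat R^{par}$ is a step-type, non-differentiable objective, its maximizer is not a classical M-estimator and one cannot simply differentiate; the argument must either borrow the asymptotic expansion of $\hat\eta_{opt}^{par}$ from the proof of part (3) or, equivalently, localize around $\eta_{opt}$ and show that maximizing $\hat R^{par}$ is equivalent, modulo $o_p(n^{-1/2})$, to maximizing the smooth $\tilde R^{par}$, so the two optimizers share an influence function. A secondary but essential subtlety inside (i) is that the normal-CDF approximation bias must be shown to be of second order $O(\nu^2)$ rather than the naive $O(\nu)$ — this is exactly where the identity $\int_{-\infty}^0\Phi = \int_0^\infty(1-\Phi)$ and the Lipschitz-density condition enter, and it is what licenses the bandwidth $\nu\asymp n^{-1/3}$. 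The remaining ingredients — positivity bounds on the weights, maximal inequalities for VC-type classes, and Taylor expansion of the smooth estimator — are routine given the earlier results.
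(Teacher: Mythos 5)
Your proposal in substance addresses only part (4); parts (1)--(3) are deferred as ``routine,'' which is defensible since the paper's own Steps 1--2 are a standard weak-law/Taylor expansion around the parametric nuisance estimators followed by an argmax argument with stochastic equicontinuity (citing Zhang et al.). For part (4), your step (i) is essentially the paper's argument (which follows Heller, 2007): condition on the linear index $\eta^\top Z_j$, kill the naive $O(\nu)$ bias by the symmetry $\int_{-\infty}^{0}\Phi(t)\,dt=\int_{0}^{\infty}\{1-\Phi(t)\}\,dt$ so that the bias is $O(\nu^2)=o(n^{-1/2})$ under condition (C.5) ($n\nu^4\to 0$), and control the centered empirical process uniformly over the unit sphere by a maximal/oscillation inequality with pointwise variance $O(\nu)$, giving $O_p(\sqrt{\nu\log n})$ after scaling by $n^{1/2}$. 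The paper does this explicitly for $K=1$ and $K=2$ and gestures at general $K$; your telescoping of the product over stages is a cleaner, uniform treatment of the same estimate.

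The genuine weak point is step (ii). You assert that $\hat\eta_{opt}^{par}$ and $\tilde\eta_{opt}^{par}$ share a first-order asymptotically linear expansion about $\eta_{opt}$ and hence differ by $o_p(n^{-1/2})$. Nothing in the paper (or in your argument) establishes asymptotic linearity of either argmax; for indicator-based value search the maximizer generically exhibits cube-root (Kim--Pollard) behavior, so this claim is unsupported and your closing Taylor step cannot be completed as written, since its remainder is only $o_p(\|\hat\eta_{opt}^{par}-\tilde\eta_{opt}^{par}\|)$. The repair --- which is exactly the paper's route --- is that no rate on the difference of the two argmaxes is needed: pivot through $\eta_{opt}$ rather than through $\hat\eta_{opt}^{par}$, writing the target as $\{\hat{S}_U^{par}(x;\hat\eta_{opt}^{par})-\hat{S}_U^{par}(x;\eta_{opt})\}+\{\hat{S}_U^{par}(x;\eta_{opt})-\tilde{S}_U^{par}(x;\eta_{opt})\}+\{\tilde{S}_U^{par}(x;\eta_{opt})-\tilde{S}_U^{par}(x;\tilde\eta_{opt}^{par})\}$. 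The middle term is your step (i) at the fixed point $\eta_{opt}$, and the outer terms are each $o_p(n^{-1/2})$ by consistency of the respective optimizers, stochastic equicontinuity of $W_n(\eta)=n^{1/2}\{\hat{S}_U^{par}(x;\eta)-S_U(x;\eta)\}$ (and its smoothed analogue), and the vanishing of $\partial_\eta S_U(x;\eta)$ at $\eta_{opt}$, which makes the population-level difference second order. With that substitution your argument coincides with the paper's proof.
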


Theorem~\ref{ch3-theorem1} serves as the building block of our main result in Theorem~\ref{ch3-theorem2}.
\begin{theorem} \label{ch3-theorem2}
Under conditions (C.1)-(C.5) in Appendix~\ref{app-ch3}, if $h_{a,j}$ and $h_{c,j,x},~j=1,\ldots,K,$, are correctly specified, for any regime $\bar{g}^{\eta}_K$, we have, as $n \rightarrow \infty$, (1) $\hat{R}^{par}(\eta) \rightarrow_p R(\eta)$,
    (2) $n^{1/2} \{\hat{R}^{par}(\eta) - R(\eta)\}$ converges to a mean-zero Gaussian process,
    (3) $n^{1/2} \{\hat{R}^{par}(\hat{\eta}_{opt}^{par}) - R(\eta_{opt})\} \rightarrow_d N(0, \sigma^2_{\eta_{opt}, 2})$, where the expression of $ \sigma^2_{\eta_{opt}, 2}$ can be found in Appendix~\ref{app-ch3},
    (4) $n^{1/2} \{\hat{R}^{par}(\hat{\eta}_{opt}^{par}) - \tilde{R}^{par}(\tilde{\eta}_{opt}^{par})\} = o_p(1)$.
\end{theorem}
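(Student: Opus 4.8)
The plan is to derive each of the four assertions from the corresponding part of Theorem~\ref{ch3-theorem1} by viewing $\hat{R}^{par}(\eta)=\int_0^{L_U}\hat{S}_U^{par}(x;\eta)\,dx$ as the image of the process $x\mapsto\hat{S}_U^{par}(x;\eta)$ under the continuous linear map ``integrate over $[0,L_U]$'', and, for the statements involving $\hat{\eta}_{opt}^{par}$, by a standard argmax/sandwich argument built on the $\eta$-indexed weak convergence. The elementary bound $|\hat{R}^{par}(\eta)-R(\eta)|\le L_U\sup_{x\in[0,L_U]}|\hat{S}_U^{par}(x;\eta)-S_U(x;\eta)|$, and its $\tilde{S}$ analogue, is used repeatedly.

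For Part (1), Theorem~\ref{ch3-theorem1}(1)--(2) implies $\sup_{x\in[0,L_U]}|\hat{S}_U^{par}(x;\eta)-S_U(x;\eta)|=o_p(1)$, so the bound above gives $\hat{R}^{par}(\eta)\rightarrow_p R(\eta)$; the uniform-in-$\eta$ control over the compact set $\{\eta:\|\eta\|=1\}$ that conditions (C.1)--(C.5) furnish for the $\hat{S}_U^{par}$ process upgrades this to $\sup_{\|\eta\|=1}|\hat{R}^{par}(\eta)-R(\eta)|=o_p(1)$, which is needed for Part (3). For Part (2), write $n^{1/2}\{\hat{R}^{par}(\eta)-R(\eta)\}=\int_0^{L_U}n^{1/2}\{\hat{S}_U^{par}(x;\eta)-S_U(x;\eta)\}\,dx$; Theorem~\ref{ch3-theorem1}(2), together with the Donsker property underpinning Theorem~\ref{ch3-theorem1}(3), gives weak convergence, jointly in $(x,\eta)$, of the integrand to a mean-zero Gaussian process $\mathbb{G}(x;\eta)$ with a.s.\ continuous sample paths, and since $f\mapsto\int_0^{L_U}f(x,\cdot)\,dx$ is Lipschitz for the supremum norm, the continuous mapping theorem yields weak convergence of $n^{1/2}\{\hat{R}^{par}(\cdot)-R(\cdot)\}$ to $\int_0^{L_U}\mathbb{G}(x;\cdot)\,dx$, which is mean zero by Fubini and Gaussian as an $L^2$-limit of Gaussian Riemann sums, with variance at a fixed $\eta$ equal to $\int_0^{L_U}\!\int_0^{L_U}\mathrm{Cov}\{\mathbb{G}(x;\eta),\mathbb{G}(x';\eta)\}\,dx\,dx'$; at $\eta=\eta_{opt}$ this is $\sigma^2_{\eta_{opt},2}$. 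For Part (4), Theorem~\ref{ch3-theorem1}(4), in its uniform-in-$x$ form, gives $\sup_{x\in[0,L_U]}n^{1/2}|\hat{S}_U^{par}(x;\hat{\eta}_{opt}^{par})-\tilde{S}_U^{par}(x;\tilde{\eta}_{opt}^{par})|=o_p(1)$, and integrating over $[0,L_U]$ gives the claim.

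Part (3) is the argmax step. Since $\{\eta:\|\eta\|=1\}$ is compact, $R$ is continuous with a well-separated maximum at $\eta_{opt}$ under (C.1)--(C.5), and $\hat{R}^{par}\rightarrow R$ uniformly by Part (1), a standard argmax consistency argument gives $\hat{\eta}_{opt}^{par}\rightarrow_p\eta_{opt}$. Put $D_n(\eta)=\hat{R}^{par}(\eta)-R(\eta)$; Part (2) says $n^{1/2}D_n(\cdot)$ is asymptotically uniformly equicontinuous in probability, hence $n^{1/2}\{D_n(\hat{\eta}_{opt}^{par})-D_n(\eta_{opt})\}=o_p(1)$. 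Using that $\hat{\eta}_{opt}^{par}$ maximizes $\hat{R}^{par}$ and $\eta_{opt}$ maximizes $R$,
\[
D_n(\eta_{opt})=\hat{R}^{par}(\eta_{opt})-R(\eta_{opt})\le\hat{R}^{par}(\hat{\eta}_{opt}^{par})-R(\eta_{opt})=D_n(\hat{\eta}_{opt}^{par})+\{R(\hat{\eta}_{opt}^{par})-R(\eta_{opt})\}\le D_n(\hat{\eta}_{opt}^{par}),
\]
so $n^{1/2}\{\hat{R}^{par}(\hat{\eta}_{opt}^{par})-R(\eta_{opt})\}=n^{1/2}D_n(\eta_{opt})+o_p(1)$, and evaluating the Part (2) limit at $\eta_{opt}$ together with Slutsky's theorem gives $n^{1/2}\{\hat{R}^{par}(\hat{\eta}_{opt}^{par})-R(\eta_{opt})\}\rightarrow_d N(0,\sigma^2_{\eta_{opt},2})$. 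Note the sandwich needs only consistency of $\hat{\eta}_{opt}^{par}$, not a rate, because $R$-optimality enters only through the sign of $R(\hat{\eta}_{opt}^{par})-R(\eta_{opt})\le 0$ (an envelope-theorem phenomenon).

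All of the above is bookkeeping on top of Theorem~\ref{ch3-theorem1} except for one genuinely substantive point, which I expect to be the main obstacle: the $\eta$-uniform weak convergence used in Parts (2) and (3), equivalently stochastic equicontinuity of $D_n$ in $\eta$. This requires the class of half-space indicators $\{I(\eta^\top z\ge 0):\|\eta\|=1\}$ appearing in $\Delta_a^\eta(x)$, multiplied by the remaining factors in \eqref{eq:direct} (all uniformly bounded under strong positivity), to form a Donsker class, and the limiting covariance kernel to be continuous in $\eta$ so that $\mathbb{G}$ has continuous paths. The half-space class is a VC class and hence Donsker, and continuity of the kernel follows from an anti-concentration/density condition on the linear scores $\eta^\top Z_j$, presumably among (C.1)--(C.5); carefully verifying these so that Theorem~\ref{ch3-theorem1}'s process statements can be taken jointly in $(x,\eta)$ is where the real work lies, after which the theorem follows as above.
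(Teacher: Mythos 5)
Your proposal is correct and follows essentially the same route as the paper: Theorem~\ref{ch3-theorem2} is deduced from Theorem~\ref{ch3-theorem1} by integrating the pointwise-in-$x$ expansion over $[0,L_U]$ (the paper phrases this as continuous mapping plus the delta method, giving the influence function $\int_0^{L_U} IC_{PAR}(o;x,\eta_{opt})\,dx$ and the variance $\sigma^2_{\eta_{opt},2}$), with the estimated optimum handled by the same stochastic-equicontinuity device. The only substantive difference is that the paper performs the argmax step at the level of $\hat{S}_U(x;\eta)$ inside Step~2 of Theorem~\ref{ch3-theorem1}'s proof (a first-order Taylor expansion using $\partial S_U(x;\eta)/\partial\eta\vert_{\eta_{opt}}=0$) and then integrates, whereas you run a sandwich inequality directly on $\hat{R}^{par}$ --- arguably the more faithful version, since $\hat{\eta}_{opt}^{par}$ maximizes $\hat{R}^{par}$ rather than $\hat{S}_U^{par}(x;\cdot)$ for each $x$, and it avoids assuming differentiability of $R$ in $\eta$ --- and you make explicit the VC/Donsker justification for uniformity in $\eta$ that the paper delegates to a citation.
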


\subsection{Nonparametric estimator of $S_U(x; \eta)$} \label{sec:nonparm}

 To improve the robustness of our approach, we propose a nonparametric estimator where the weight functions are estimated nonparametrically. In general, obtaining an inverse probability weighted estimator with a desired root-$n$ is challenging due to  slower rate of convergence of estimated weight functions. 
 We overcome this issue by estimating the nuisance parameters using undersmoothed highly adaptive lasso denoted as $\hat H_a^{npar}$ and $\hat H_c^{npar}$. Specifically, we  show that (i) under certain assumption listed below $\Psi_{\hat H_a^{npar},\hat H_c^{npar}}(x;\eta)$ is an unbiased estimating equation; and, (ii)  the resulting estimator $\hat{S}_U^{npar}(x; \eta)$ is asymptotically linear. 
 The estimated optimal regime at $x$ is defined as $\hat{\eta}_{opt}^{npar} = \operatorname*{argmax}_{\eta}\hat{S}_U^{npar}(x; \eta)$. 

To improve the performance of our estimator in finite samples, we propose employing cross-fitting for estimating nuisance parameters ~\citep{klaassen1987consistent, zheng2011cross, chernozhukov2017double}. The dataset is randomly divided into $B$ mutually exclusive and exhaustive sets, each approximately of size $n B^{-1}$. The empirical distribution of a training and validation sample is denoted as $\P_{n,b}^0$ and $\P_{n,b}^1$, respectively, for $b=1,2,\cdots,B$. 
The resulting estimating equation is
\begin{equation} 
   \Psi_{H_a^{npar},H_c^{npar}}(x;\eta)= \frac{1}{B} \sum_{b=1}^B \P_{n,b}^1 \frac{\Delta^{\eta}_{a}(x)\Delta_{c}(x)}{H_{a,b}^{npar}(x)H_{c, b}^{npar}(x)}\{I(U > x) - S_U(x; \eta)\}.
\end{equation}


 The next theorem shows that, when undersmoothed highly adaptive lasso estimator is used to model the nuisance parameters, we can still develop the asymptotic linearity of our estimators.

\begin{assumption}[Complexity of nuisance functions] \label{assump:cadlag}
 For each $j=1,2,\cdots,l(x)$, the functions $Q_j(x)= E\{I(U  >x) | \bar{A}_{j}=\bar g_j^\eta,C_{j-1}(x)=0,\bar{Z}_j \}$, $h_{a,j}$ and $h_{c,j,x}$ are c\`{a}dl\`{a}g with finite
  sectional variation norm.
\end{assumption}
Assumption \ref{assump:cadlag} is rather mild and includes a wide class of true conditional mean models likely encountered in practice. Specifically, it defines the overall smoothness assumption for the true functions, which is considerably less restrictive compared to local smoothness assumptions imposed by  H{\"o}lder balls~\citep[e.g.,][]{robins2008higher, robins2017minimax, mukherjee2017semiparametric}. This assumption facilitates a rapid convergence rate of $n^{-1/3}$ (up to a log factor) achievable through the highly adaptive lasso, irrespective of the dimensionality \citep{ van2023higher}.

\begin{assumption}[Undersmoothing]\label{assump:proj}
Let 
\begin{align*}
    f^a_j&= \frac{Q_j(x) I(\bar A_{j-1}=\bar g_{j-1}^\eta) I\{C_{j-1}(x)=0\} \hat h_{c,j,x}(0\mid\bar{z}_s, \bar{a}_s) }{\Pi_{s \le j}\{1 - h_{c,s}(1\mid\bar{z}_s, \bar{a}_s)\}\Pi_{s \le j}\{ h_{a,s}(g_j^\eta\mid\bar{z}_s, \bar{a}_{s-1})\}}\\
    f^c_j&= \frac{Q_j(x) I(\bar A_{j-1}=\bar g_{j-1}^\eta) I\{C_{j-1}(x)=0\} I(A_j=g_j^\eta) }{\Pi_{s \le j}\{1 - h_{c,s}(1\mid\bar{z}_s, \bar{a}_s)\}\Pi_{s \le j}\{ h_{a,s}(g_j^\eta\mid\bar{z}_s, \bar{a}_{s-1})\}},
\end{align*}
where $Q_j(x)= E\{I(U  >x) | \bar{A}_{j}=\bar g_j^\eta,C_{j-1}(x)=0,\bar{Z}_j \}$ and $j=1,2,\cdots,l(x)$.
 Let  ${f}_{\phi,j}^a$ and ${f}_{\phi,j}^c$   be the projections of  $f^a $ and $f^c$ onto a linear
  span of basis functions $\phi_{k}$ in $L^2(P)$, for $\phi_{k}$
  satisfying conditions (\ref{eq:basisa}) and (\ref{eq:basisc})  of Lemma  \ref{lem:ucondition}. Then,  $\lVert f_j^a - {f}_{\phi,j}^a
  \rVert_{2,\mu} = O_p(n^{-1/4})$ and $\lVert f_j^c - {f}_{\phi,j}^c
  \rVert_{2,\mu} = O_p(n^{-1/4})$ 
where $\mu$ is a $\sigma$-finite measure.
\end{assumption}

Assumption \ref{assump:proj} is a crucial condition for maintaining the asymptotic linearity of $ \hat{R}^{npar}(\hat{\eta}_{opt}^{npar})$. It asserts that when the estimated coarsening mechanisms (i.e., $h_{a,j}$ and $h_{c,j,x}$) are appropriately undersmoothed, the generated features can approximate functions $f_j^a$ and $f_j^c$ at a rate of $n^{-1/4}$. Under Assumption \ref{assump:cadlag}, ${f}_j^a$ and ${f}_j^c$ fall into the class of c\`{a}dl\`{a}g functions with a finite sectional variation norm, and thus can be approximated using the highly adaptive lasso-generated basis functions at a rate of $n^{-1/3}$ (up to a $\log n$ factor). It's noteworthy that the required rate in Assumption \ref{assump:proj}, $O_p(n^{-1/4})$, is slower than the $O_p(n^{-1/3})$ rate obtained by the highly adaptive lasso estimator, affirming the credibility of the assumption.

In the proof of Theorem \ref{ch3-theorem3}, we show that our estimator will be asymptotically linear when 
$\left| \P_n \sum_{j=0}^{l(x)} f^a_j \left\{I(A_j=g_j^\eta)  - \hat h_{a,j}^{npar}(g_j^\eta\mid\bar{a}_{j-1},\bar{z}_j)\right\} \right|=o_p(n^{-1/2})$ and \\
$\left| \P_n \sum_{j=0}^{l(x)} f^c_j \left\{I(C_j(x)=0)  - \hat h_{c,j,x}^{npar}(0\mid \bar{a}_j,\bar{z}_j)\right\} \right|=o_p(n^{-1/2})$.  Lemma \ref{lem:ucondition} in the Supplementary Material provides the theoretical undersmoothing prerequisites for $\hat h_{a,j}^{npar}(g_j^\eta\mid\bar{a}_{j-1},\bar{z}_j)$ and $\hat h_{c,j,x}^{npar}(0\mid \bar{a}_j,\bar{z}_j)$ to satisfy the $o_p(n^{-1/2})$ conditions.

\begin{theorem} \label{ch3-theorem3}
Let $L(\cdot)$ be a log-likelihood loss function. Assume that conditions (C.1), (C.2), (C.4) and (C.5) in Appendix~\ref{app-ch3} hold and both treatment assignment  and censoring mechanisms belong to a class of c\`{a}dl\`{a}g functions with finite sectional variation norm. Let $\hat H_{a,\lambda_n^a}^{npar}$ and $\hat H_{c,\lambda_n^c}^{npar}$ be highly adaptive lasso estimators of $H_a$ and $H_c$ using $L_1$ norm bounds equal to $\lambda_n^a$ and $\lambda_n^c$ where $\lambda_n^a$ and $\lambda_n^c$ are data dependent parameter chosen such that conditions (\ref{eq:basisa}) and (\ref{eq:basisc}) are satisfied. 
 Then  
$
n^{1/2} \{\hat{S}^{npar}(\hat{\eta}_{opt}^{npar}) - S({\eta}_{opt})\} = n^{-1/2} \sum_{i=1}^n IC_{ HAL}(o_i) + o_p(1),
$ where the influence function $IC_{ HAL}(o_i)$ is defined in Appendix~\ref{app-ch3}.
\end{theorem}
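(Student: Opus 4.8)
The plan is to establish the asymptotic linearity of $\hat{S}^{npar}(\hat{\eta}_{opt}^{npar})$ in three stages, mirroring the structure of Theorems~\ref{ch3-theorem1}--\ref{ch3-theorem2} but replacing the parametric delta-method arguments with empirical-process control of the highly adaptive lasso fits. First I would fix $\eta$ and show that $n^{1/2}\{\hat{S}_U^{npar}(x;\eta) - S_U(x;\eta)\}$ is asymptotically linear. Starting from the estimating equation $\Psi_{\hat H_a^{npar},\hat H_c^{npar}}(x;\eta)=0$, I would perform a first-order expansion in the nuisance functions around the truth $(H_a,H_c)$. Writing $\hat{S}_U^{npar}-S_U$ as a ratio, the numerator decomposes into (i) the oracle term $\mathbb{P}_n\left[\frac{\Delta^\eta_a(x)\Delta_c(x)}{H_a(x)H_c(x)}\{I(U>x)-S_U(x;\eta)\}\right]$, which by Assumption~\ref{assump:basic} and a standard coarsening-at-random argument has mean zero and contributes the leading influence function; (ii) a ``drift'' term involving $\P_n$ applied to the products $f^a_j\{I(A_j=g_j^\eta)-\hat h_{a,j}^{npar}\}$ and $f^c_j\{I(C_j(x)=0)-\hat h_{c,j,x}^{npar}\}$ summed over $j\le l(x)$, obtained by telescoping the difference of the two cumulative-product weights $H_a^{npar}H_c^{npar}$ versus $H_aH_c$ landmark by landmark; and (iii) a second-order remainder that is a product of two nuisance errors, bounded by $\sum_j \|\hat h_{a,j}^{npar}-h_{a,j}\|_2\,\|\hat h_{c,j,x}^{npar}-h_{c,j,x}\|_2 = o_p(n^{-1/2})$ under Assumption~\ref{assump:cadlag} (the HAL rate $n^{-1/3}\log n$ squared is $o_p(n^{-1/2})$), plus an empirical-process term controlled by the Donsker property of HAL classes with bounded sectional variation norm via cross-fitting (so the $o_p(n^{-1/2})$ empirical-process increment holds even without Donsker, by the sample-splitting argument of \citet{chernozhukov2017double}).

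The heart of the argument is to kill term (ii). Here I would invoke the undersmoothing: by construction the HAL estimator $\hat h_{a,j}^{npar}$ with $L_1$-norm bound $\breve\lambda_{n,j}^a$ solves a Karush--Kuhn--Tucker condition forcing $\P_n\tilde\Omega_{s,i}(\phi,\hat h_{a,j}^{npar})$ to be $o_p(n^{-1/2})$ for every basis function $\phi_{s,i}$ in the active set (this is exactly what Lemma~\ref{lem:ucondition} and conditions (\ref{eq:basisa})--(\ref{eq:basisc}) deliver). Since $f^a_j$ is, by Assumption~\ref{assump:cadlag}, càdlàg with finite sectional variation norm, it lies in the closed linear span of the HAL basis, and Assumption~\ref{assump:proj} says its $L^2(P)$-projection $f^a_{\phi,j}$ onto the selected basis approximates it to $O_p(n^{-1/4})$. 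Decomposing $f^a_j = f^a_{\phi,j} + (f^a_j - f^a_{\phi,j})$, the projected part is a finite linear combination of basis functions on which the KKT conditions give $o_p(n^{-1/2})$, while the residual part pairs with $\{I(A_j=g_j^\eta)-\hat h_{a,j}^{npar}\}$ to give a term of order $O_p(n^{-1/4})\cdot O_p(n^{-1/4})=o_p(n^{-1/2})$ by Cauchy--Schwarz (using $\|\hat h_{a,j}^{npar}-h_{a,j}\|_2=o_p(n^{-1/4})$ and the fact that the centered indicator has the same $L^2$ size up to the nuisance error). The identical reasoning applied to $f^c_j$ and the censoring scores eliminates the censoring contribution. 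Collecting terms, $n^{1/2}\{\hat{S}_U^{npar}(x;\eta)-S_U(x;\eta)\}$ equals $n^{-1/2}\sum_i \{S_U(x;\eta)\}^{-1}$-scaled oracle influence contributions plus $o_p(1)$, uniformly over $x\in(0,L_U]$, giving a tight Gaussian process limit.

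Next I would handle the data-dependence of $\hat{\eta}_{opt}^{npar}$. Because $\hat{\eta}_{opt}^{npar}=\argmax_\eta \hat{S}_U^{npar}(x;\eta)$ and the smoothed surrogate $\tilde\Delta_a^\eta(x)$ makes the objective differentiable in $\eta$ with bandwidth $\nu=n^{-1/3}sd(\mathcal{Z})/K\to0$, I would argue as in \citet{jiang2017estimation} and Theorem~\ref{ch3-theorem1}(3)--(4): the map $\eta\mapsto S_U(x;\eta)$ has a well-separated maximum at $\eta_{opt}$ (a margin/identifiability condition folded into (C.1)--(C.5)), so $\hat{\eta}_{opt}^{npar}\to_p\eta_{opt}$ by an argmax-continuous-mapping argument; then a Taylor expansion of $\hat{S}_U^{npar}(x;\cdot)$ at $\eta_{opt}$ combined with the first-order condition shows the $(\hat\eta-\eta_{opt})$ contribution enters at second order because the population gradient vanishes at $\eta_{opt}$, and the smoothing-induced bias is $o_p(n^{-1/2})$ by the choice of $\nu$ and the fourth claim of Theorem~\ref{ch3-theorem1} transplanted to the nonparametric fit. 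Hence $n^{1/2}\{\hat{S}^{npar}(\hat{\eta}_{opt}^{npar})-S(\eta_{opt})\} = n^{1/2}\{\hat{S}_U^{npar}(x;\eta_{opt})-S_U(x;\eta_{opt})\}+o_p(1)$, which by the first stage equals $n^{-1/2}\sum_i IC_{HAL}(o_i)+o_p(1)$, where $IC_{HAL}$ is the oracle influence function of the inverse-probability-weighted estimating equation at $\eta_{opt}$ (it carries no correction terms for the nuisance estimation precisely because undersmoothing made those terms negligible, which is the sense in which the estimator attains nonparametric efficiency without deriving the efficient influence function).

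The main obstacle I anticipate is the simultaneous control of the telescoping nuisance-error expansion (term (ii)) across all $l(x)$ landmarks: the cumulative products $H_a(x)$ and $H_c(x)$ couple the errors at different stages multiplicatively, so one must carefully show that replacing each $h_{a,j}$ (resp.\ $h_{c,j,x}$) one at a time produces a sum of single-stage linear terms plus genuinely higher-order cross-stage products, and that the coefficients $f^a_j, f^c_j$ arising from this peeling are exactly the càdlàg functions named in Assumption~\ref{assump:proj} with uniformly bounded sectional variation norm (so the projection/undersmoothing argument applies stage-by-stage with constants that do not blow up in $K$). Positivity (Assumption~\ref{assump:basic}c) is what keeps all the denominators bounded away from zero throughout this peeling, and is essential for both the bounded-variation claim on $f^a_j, f^c_j$ and the $L^2$ bounds in the Cauchy--Schwarz step.
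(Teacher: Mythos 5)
Your overall architecture tracks the paper's proof closely: the same three-way decomposition $\P_n \breve\Psi_{\hat H_{a,c}} - \P\breve\Psi_{H_{a,c}} = (\P_n-\P)\breve\Psi_{H_{a,c}} + \P(\breve\Psi_{\hat H_{a,c}}-\breve\Psi_{H_{a,c}}) + (\P_n-\P)(\breve\Psi_{\hat H_{a,c}}-\breve\Psi_{H_{a,c}})$, the same stagewise peeling of the cumulative weights into score-type terms $f^a_j\{I(A_j=g_j^\eta)-\hat h^{npar}_{a,j}\}$ and $f^c_j\{I(C_j(x)=0)-\hat h^{npar}_{c,j,x}\}$, the same use of the KKT/score conditions of Lemma~\ref{lem:ucondition} together with Assumption~\ref{assump:proj} to make those terms $o_p(n^{-1/2})$, and the same argmax/equicontinuity treatment of $\hat\eta_{opt}^{npar}$ recycled from Theorem~\ref{ch3-theorem1}.

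There is, however, a genuine gap in your final accounting, and it changes the influence function. The bias term $\P(\breve\Psi_{\hat H_{a,c}}-\breve\Psi_{H_{a,c}})$ is an expectation under $\P_0$, whereas the object that undersmoothing controls is the \emph{empirical} score $\P_n\sum_{j} f^a_j\{I(A_j=g_j^\eta)-\hat h_{a,j}\}$ (and its censoring analogue). Moreover, the raw telescoping of $1/\hat H_{a,c}-1/H_{a,c}$ produces increments in $(\hat h_{j}-h_{j})$, not in $\{I(A_j=g_j^\eta)-\hat h_{j}\}$; passing from one to the other uses $\P_0\bigl[f^a_j\{I(A_j=g_j^\eta)-h_{a,j}\}\bigr]=0$ and costs you exactly $-(\P_n-\P_0)\sum_j f^a_j\{I(A_j=g_j^\eta)-h_{a,j}\}$ plus a negligible $(\P_n-\P_0)$ increment of a vanishing function. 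That first quantity is a centered empirical average of a fixed, nondegenerate function --- the paper's $D_{CAR}(Q,h_{a,c})$ --- and it is $O_p(n^{-1/2})$, \emph{not} $o_p(n^{-1/2})$. The paper's $IC_{HAL}$ is accordingly $\breve\Psi_{H_{a,c}}$ \emph{minus} this $D_{CAR}$ augmentation term, whereas you assert the influence function ``carries no correction terms for the nuisance estimation.'' This also inverts the efficiency story: the estimator attains nonparametric efficiency precisely because estimating the weights with undersmoothed HAL injects $-D_{CAR}$ into the influence function, pushing the plain IPW influence function toward the efficient one; if the nuisance contribution truly vanished you would be left with the inefficient known-weights IPW influence function. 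Any variance estimate or confidence interval built on your ``oracle'' influence function would therefore be incorrect.
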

A direct consequence of Theorem \ref{ch3-theorem3} is that the RQAL with highly adaptive lasso will be asymptotic linear with
      $n^{1/2}\left\{\hat{R}^{npar}(\hat{\eta}_{opt}^{npar}) - R(\eta_{opt})\right\} = n^{-1/2} \sum_{i=1}^{n} \int_{0}^{L_U} IC_{HAL}(o_i; x, \eta_{opt}) dx + o_p(1).$
An important implication of Theorems \ref{ch3-theorem2} and \ref{ch3-theorem3} is that when the form of the influence functions is unknown, inference is attainable via the standard bootstrap \citep{cai2020nonparametric}.    Alternatively, for $\diamond \in \{par,npar\}$, one may use a conservative variance estimator for $\hat{R}^{\diamond}(\hat{\eta}_{opt}) $ defined as $\hat\sigma^2_{\hat\eta_{opt},\diamond} = n^{-1}\sum_{i=1}^{n} \left(\int_{0}^{L_U} \widehat{IC}_{x, \hat{\eta}}^{\diamond}(o_i)\right)^2$, 
where $\widehat{IC}_{x, \hat\eta}^{\diamond}(o_i) = W_{x, \hat\eta}^{\diamond}(o_i)\{I(U_i > x) - S_U(x; \hat{\eta})\} / n^{-1}\sum_{i=1}^{n} W_{x, \hat\eta}^{\diamond}(o_i)$
and \\ $W_{x, \hat\eta}^{\diamond}(o_i) = \Delta^{\hat\eta}_{a,i}(x)\Delta_{c,i}(x) \{I(U_i > x) - S_U(x; \hat{\eta})\}/\hat H_{a,\lambda_n^a i}^{\diamond}(x) \hat H_{c,\lambda_n^c, i}^{\diamond}(x)$.

\section{Simulation Results}\label{ch3-sec:simu}

We conduct  simulation studies to investigate the performance of the proposed estimator for restricted quality-adjusted lifetime (RQAL). We estimate the weight functions using logistic regression, highly adaptive lasso, and random forest where we used R packages \texttt{randomForest} \citep{liaw2002classification}  and \texttt{hal9001} \citep{coyle2019hal9001} with all tuning parameters
set to their respective default values. The only exception is the parameter $ \lambda$ in highly adaptive lasso that is specified using the undersmoothing criteria presented in Section \ref{sec:undercri}.

Let the end time of the study be $L$, the gap between two landmarks be $G$. Let  $K = L/G$ be the number of stages in the simulated longitudinal study. Define the true optimal regime as $g^{\eta_{opt}} = \{g_j^{\eta_{opt}} =I(\eta_{opt}^\top Z_j\ge 0)^{1 - A_{j-1}}, j = 1,\ldots,K\} $ where $\eta_{opt} = (1, -1, -1)$. For convenience of notation, we define $W_j = I(\bar{A}_j = g^{\eta_{opt}})$ indicating whether the observed treatment assignment $\bar{A}_j$ matches the true optimal regime up to the $j$th landmark.
We generate baseline values $Z_0 = (Z_{10}, Z_{20}, Z_{30})$ independently from uniform(0.6, 1) and set $A_0 = C_0 = 0$. 
 If a patient is still at risk at the $j$th landmark, we will update the time-dependent information $Z_j = (Z_{1j}, Z_{2j}, Z_{3j})$ where $Z_{1j} = \mathrm{uniform}(0.4 + 0.02\times K, 1)\times Z_{10}$, $Z_{2j} = \mathrm{uniform}(0.4 + 0.02\times K, 1)\times Z_{20}$ and $Z_{3j} = \mathrm{uniform}\{0.4 + 0.02 \times K - (1 - W_j) \times 0.1, 1 - (1 - W_j) \times 0.1)\}\times Z_{30}$. In our model, $Z_{3j}$ serves the role of the quality of life score assessed at time $j$. Patients who do not follow the optimal regime $g^{\eta_{opt}}$ will receive a penalty with respect to quality of life score or the hazard rate function for survival, and thus, their expected quality-adjusted lifetime will not be optimal when deviating from $\eta_{opt}$.

We assume the data generation process follows the following discrete hazard models: $h_{a, j}(\bar{z}) = logit^{-1}(\kappa_0 + \kappa_1 \times z_{1j} + \kappa_2 \times z_{2j})$ and  
        $h_{c, j}(\bar{z}, \bar{a}) = logit^{-1}(\nu_0 + \nu_1 \times z_{1j} + \nu_2 \times z_{2j})$, 
where $(\kappa_0, \kappa_1, \kappa_2) = (0.5 - 0.1 \times K, -0.5, -0.5)$,  $(\nu_0, \nu_1, \nu_2) = (-2 + 0.5 \times \kappa_0, -1, -1)$. 
We consider two scenarios  for the  treatment and censoring mechanisms:

\par
\medskip
\begin{itemize}
    \item \textit{Scenario 1} (hazard models are correctly specified by logistic models):
    In this scenario, the covariates $Z$ is observed and can be used to model the discrete hazard functions for treatment assignment and censoring.
    \item \textit{Scenario 2} (hazard models are mis-specified by logistic models):
    In this scenario, the transformed covariates $X = \{(Z_1 + Z_2 - 1)^2, (Z_1-0.5)^2, Z_3\}$ is observed and can be used model the discrete hazard functions for treatment assignment and censoring.
\end{itemize}

We generate 500 datasets of sizes  250 and 500 according to each scenario where, for $t \in (jG, (j+1)G)]$ and   $j = 0,..., K-1$, the discrete survival hazard model  $h_{y}(t, \bar{a}, \bar{z}) =  pr(T = t | T > t - 1, \bar{A}_j = \bar{a}_j, \bar{C}_j = 0, \bar{Z}_j = \bar{z})$ is  
    $h_{y}(t, \bar{a}, \bar{z}) =   \text{logit}^{-1}\{-5 + t \times (6 \times 0.75)/K -0.5 \times z_{1j} -0.5 \times z_{2j} -0.5 \times a_j \times W_j\}$.

We then the define the utility map as $\mathcal{Q}(Z_{3j}) = Z_{3j}/\max_{k = 0, .., K} Z_{3k}$ which is always a number between (0, 1]. Patients' quality of life scores will be set to zero if they are no longer at-risk. This set up will generate the data with around 15\% censoring.  To approximate the true optimal restricted quality-adjusted lifetime, we generated  $10^5$ samples from the true survival model with treatment assignment according to $g^{\eta_{opt}}$. To estimate the optimal regimes, a genetic  optimization algorithm in the R package \texttt{rgenoud} \citep{mebane2011genetic} was implemented.

When the weight functions are estimated using random forest, the proposed estimator is no longer asymptotically linear, and the theoretical formula for the standard error does not exist. We used the conservative variance estimator presented at the end of Section \ref{sec:nonparm}.

\begin{remark}
    To simplify the presentation of different estimators in the tables, we use the notation $\breve R$ and $\breve \eta$  to represent the estimated quantities corresponding to the methods specified in each row. For example, in Table \ref{tb:4.2},  $\breve R(\breve \eta_{opt})$ in row 2 denotes $\hat R^{npar}(\hat \eta^{npar}_{opt})$, whereas in row 4, it denotes $\tilde R^{par}(\tilde \eta^{par}_{opt})$.
\end{remark}


\subsection{Correctly Specified Hazard Models} \label{sim:corect}

We generate the treatment and censoring variables according to scenario 1 and fit a correctly specified logistic model for the treatment assignment and the censoring mechanism. We consider the following two setups for $L$ and $G$.   In the first set of simulation, we set $L = 60$ and $G = 10$ with the true targwt parameter $R(\eta_{opt})$ = 21.04 weeks.   In the second set of simulation, we set $L = 100$ and $G = 4$ with the true $R(\eta_{opt})$ = 31.74 weeks in this case. See Section \ref{app-scenario1} of the Supplementary Material for more details.

Table \ref{App:tb:4.1} in Section \ref{app-scenario1} of the Supplementary Material  shows the results of the estimated optimal treatment length strategy using the proposed method where weight functions are estimated using logistic regression, highly adaptive lasso and random forest.  The highly adaptive lasso and logistic regression perform similarly in terms of estimating RQAL under the estimated optimal regime (i.e., $\breve{R}(\breve{\eta}_{opt})$). Importantly, the empirical standard errors reported in the parenthesis in the column  $\breve{R}(\breve{\eta}_{opt})$ matches closely the estimated theoretical standard errors (i.e., SE) which confirm our results in Theorem \ref{ch3-theorem3}. Figure \ref{fg:4.1} (first row) displays the coverage probabilities where the dashed line represents the nominal coverage of 0.95. 
In general, both logistic (circle) and highly adaptive lasso (cross) fits lead to reasonable coverage probabilities. When $K = 25$ and $n = 250$, the estimated theoretical standard errors tend to underestimate empirical standard errors, which partially leads to the insufficient coverage probabilities. This is expected because for larger number of stages, we would need more sample to learn the optimal regime. However, when the sample size is increased to 500, the empirical standard errors match the estimated standard errors and the coverage probabilities are improved considerably. See Section \ref{app-scenario1} of the Supplementary Material for additional discussion and figures. 

\begin{figure}[t]
\centering
\captionsetup{font=small}
  \includegraphics[width=0.7\textwidth]{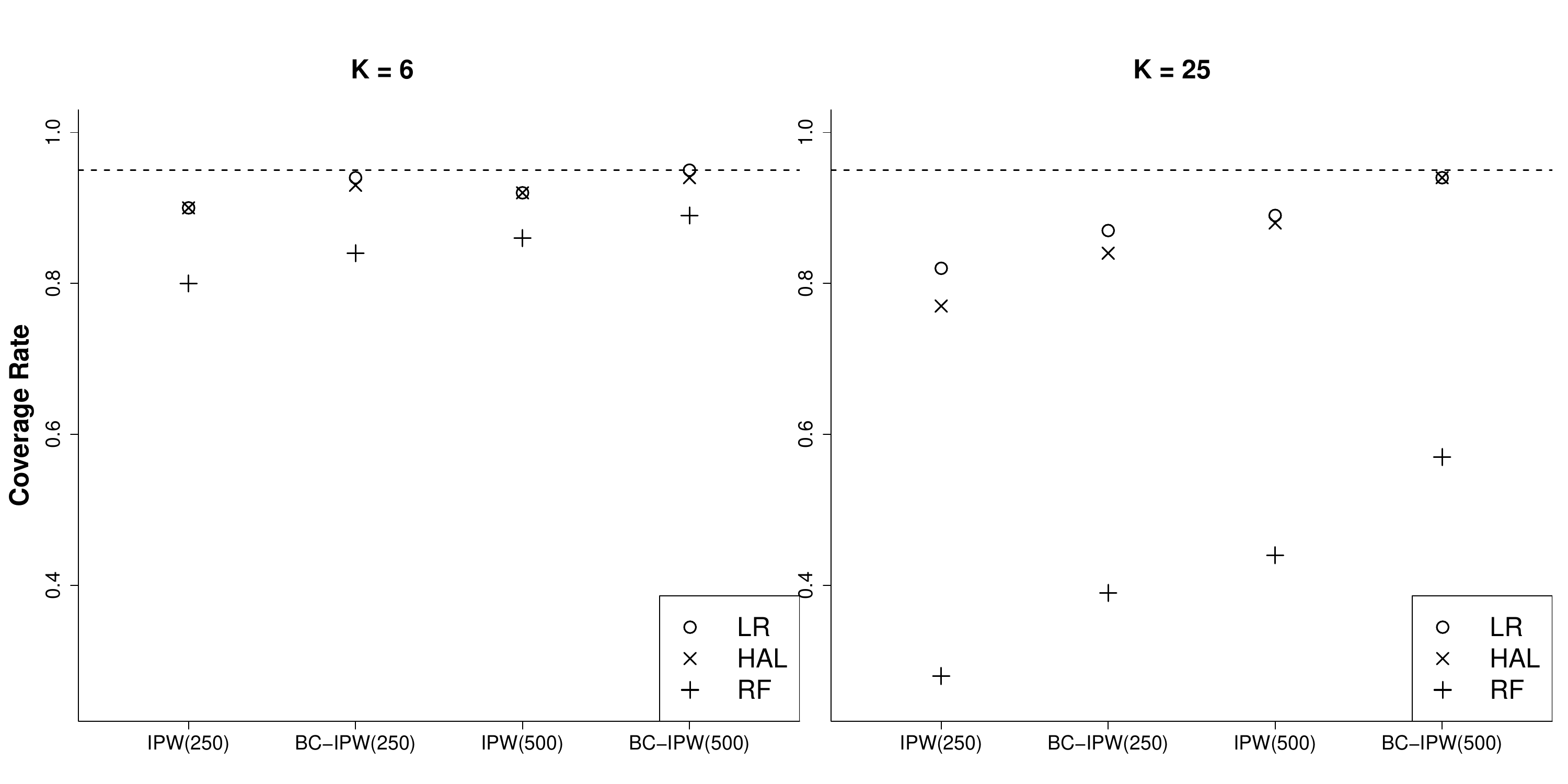}
  \includegraphics[width=0.7\textwidth]{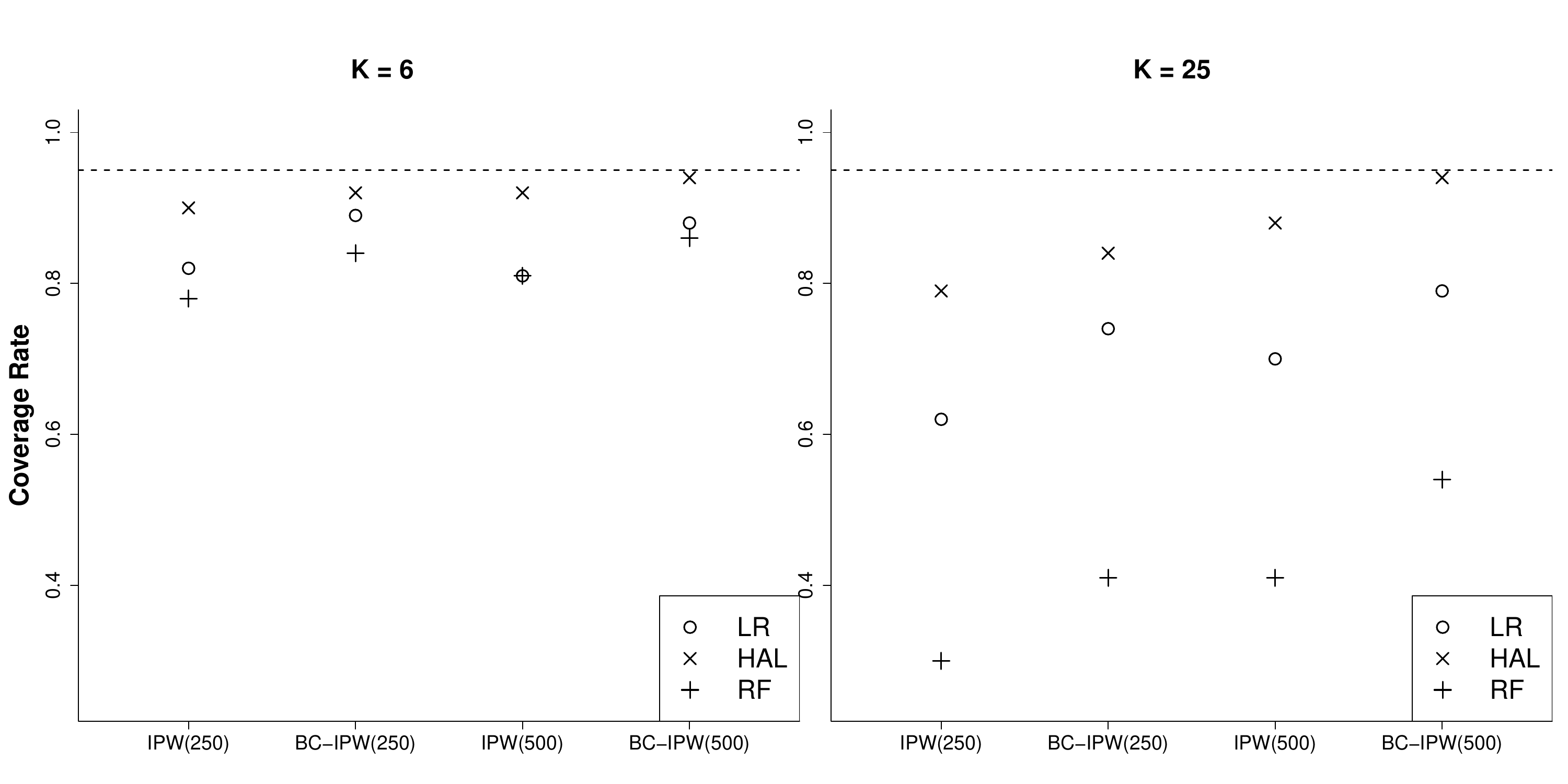}
   \caption{The coverage rates of the estimated RQAL in Scenarios 1 (first row) and 2 (second row) with logistic (LR), highly adaptive lasso (HAL) and random forest (RF) hazard models. IPW is the estimator \eqref{eq:rqaldirect} while BC-IPW is the bias-corrected estimator \eqref{eq:rqalsmooth}. The number in brackets is the sample size and the dashed line shows the nominal rate of 0.95. }
   \label{fg:4.1}
\end{figure}

\begin{table}[htbp!]
\centering
\captionsetup{font=small}
\caption{Summary table of simulation studies in Scenario 2. The target parameter is $R(\eta_{\text{opt}}) = 21.04$ when $K = 6$ and $R(\eta_{\text{opt}}) = 31.74$ when $K = 25$. The numbers in brackets are the empirical standard deviation. The notation $\breve{R}$ and $\breve{\eta}$ denote the estimated quantities corresponding to the methods in each row.}
\label{tb:4.2}
\scalebox{0.8}{
\begin{tabular}{ll|c|c|ccc|c|c|c|c|c}
\hline
Method        & Model   & $K$ & $n$  & $\eta_0$        & $\eta_1$        & $\eta_2$        & $\breve{R}(\breve{\eta}_{\text{opt}})$ & SE   & CP   & $R(\breve{\eta}_{\text{opt}})$ & MR           \\ \hline
\multirow{3}{*}{IPW} 
              & Logit   & \multirow{6}{*}{6}  & \multirow{3}{*}{250} & 1.08(0.19) & 1.03(0.32) & 0.96(0.15) & 21.63(0.57) & 0.59 & 0.82 & 20.41(0.26) & 19.24(7.59) \\
              & HAL     &                     &                      & 1.01(0.18) & 1.07(0.35) & 0.99(0.15) & 21.26(0.53) & 0.55 & 0.92 & 20.69(0.33) & 10.72(10.40) \\
              & RF      &                     &                      & 1.04(0.39) & 1.08(0.76) & 1.00(0.32) & 21.75(0.75) & 0.69 & 0.78 & 20.54(0.52) & 16.20(16.04) \\ \cline{4-12}
\multirow{3}{*}{BC-IPW} & Logit   &                     & \multirow{3}{*}{250} & 1.07(0.19) & 1.06(0.35) & 0.95(0.14) & 21.46(0.56) & 0.59 & 0.89 & 20.44(0.25) & 19.15(7.40) \\
              & HAL     &                     &                      & 1.02(0.19) & 1.06(0.36) & 1.00(0.16) & 20.93(0.52) & 0.56 & 0.97 & 20.70(0.29) & 10.46(9.54)  \\
              & RF      &                     &                      & 1.04(0.46) & 1.10(0.81) & 0.98(0.36) & 21.57(0.75) & 0.69 & 0.84 & 20.51(0.51) & 17.09(16.06) \\ \hline
\multirow{3}{*}{IPW} 
              & Logit   & \multirow{6}{*}{6}  & \multirow{3}{*}{500} & 1.06(0.09) & 0.99(0.16) & 0.96(0.08) & 21.49(0.40) & 0.42 & 0.81 & 20.47(0.15) & 17.60(4.55)  \\
              & HAL     &                     &                      & 1.01(0.09) & 1.02(0.17) & 1.00(0.09) & 21.15(0.38) & 0.39 & 0.94 & 20.84(0.16) & 5.81(5.40)   \\
              & RF      &                     &                      & 1.02(0.18) & 1.05(0.38) & 1.00(0.15) & 21.52(0.56) & 0.53 & 0.81 & 20.77(0.30) & 8.92(9.77)   \\ \cline{4-12}
\multirow{3}{*}{BC-IPW} & Logit   &                     & \multirow{3}{*}{500} & 1.06(0.09) & 1.01(0.16) & 0.96(0.07) & 21.37(0.40) & 0.42 & 0.88 & 20.51(0.14) & 17.28(4.28)  \\
              & HAL     &                     &                      & 1.01(0.09) & 1.01(0.16) & 1.00(0.08) & 20.92(0.37) & 0.40 & 0.95 & 20.84(0.15) & 5.79(4.92)   \\
              & RF      &                     &                      & 1.03(0.20) & 1.05(0.37) & 1.00(0.16) & 21.38(0.56) & 0.53 & 0.86 & 20.75(0.32) & 9.50(10.21)  \\ \hline
\multirow{3}{*}{IPW} 
              & Logit   & \multirow{6}{*}{25} & \multirow{3}{*}{250} & 1.10(0.27) & 1.03(0.40) & 0.96(0.17) & 33.13(1.10) & 1.00 & 0.62 & 28.45(0.86) & 66.84(10.44) \\
              & HAL     &                     &                      & 1.05(0.37) & 1.10(0.65) & 0.99(0.27) & 32.37(0.83) & 0.87 & 0.85 & 30.14(0.33) & 33.09(24.18) \\
              & RF      &                     &                      & 1.13(0.78) & 1.10(0.89) & 1.02(0.28) & 33.75(0.89) & 0.72 & 0.30 & 30.05(1.78) & 34.46(24.43) \\ \cline{4-12}
\multirow{3}{*}{BC-IPW} & Logit   &                     & \multirow{3}{*}{250} & 1.11(0.29) & 1.03(0.43) & 0.96(0.18) & 32.75(1.08) & 1.03 & 0.74 & 28.44(0.72) & 68.00(10.73) \\
              & HAL     &                     &                      & 1.07(0.44) & 1.06(0.58) & 1.01(0.27) & 31.59(0.86) & 0.92 & 0.95 & 30.26(1.06) & 32.53(22.61) \\
              & RF      &                     &                      & 1.11(0.81) & 0.96(1.21) & 1.05(0.37) & 33.46(0.94) & 0.77 & 0.41 & 30.05(1.35) & 36.31(24.60) \\ \hline
\multirow{3}{*}{IPW} 
              & Logit   & \multirow{6}{*}{25} & \multirow{3}{*}{500} & 1.06(0.10) & 1.01(0.17) & 0.96(0.09) & 32.73(0.87) & 0.81 & 0.70 & 28.51(0.44) & 67.37(8.15)  \\
              & HAL     &                     &                      & 1.02(0.15) & 1.03(0.23) & 1.00(0.12) & 32.08(0.72) & 0.69 & 0.86 & 30.90(0.77) & 18.08(17.11) \\
              & RF      &                     &                      & 1.05(0.27) & 1.03(0.42) & 1.02(0.19) & 33.31(0.80) & 0.72 & 0.41 & 30.63(0.94) & 25.48(20.44) \\ \cline{4-12}
\multirow{3}{*}{BC-IPW} & Logit   &                     & \multirow{3}{*}{500} & 1.06(0.09) & 1.01(0.16) & 0.96(0.07) & 32.44(0.88) & 0.84 & 0.79 & 28.48(0.41) & 68.66(7.68)  \\
              & HAL     &                     &                      & 1.01(0.11) & 1.02(0.20) & 1.00(0.10) & 31.47(0.70) & 0.73 & 0.95 & 30.95(0.65) & 17.20(14.85) \\
              & RF      &                     &                      & 1.04(0.28) & 1.08(0.54) & 1.00(0.21) & 33.04(0.83) & 0.76 & 0.54 & 30.56(1.02) & 26.84(22.01) \\ \hline
\end{tabular}}
\end{table}

\subsection{Misspecified Hazard Models}
In Scenario 2, the logistic models include the transformed covariates $X$ instead of $Z$, and thus, logistic regressions using $X$ are misspecified. We consider two setups for $L$ and $G$ that are similar to those in Section~\ref{sim:corect} and generate 500 datasets of sizes 250 and 500.

 Table~\ref{tb:4.2} summarizes the results of Scenario 2. In this table, IPW is the direct estimation using \eqref{eq:rqaldirect}, BC-IPW is the bias-correction estimator \eqref{eq:rqalsmooth}; Logit, logistic regression, HAl, highly adaptive lasso, RF, random forest; $K$ is the number of stages; $n$ is the sample size; $\eta_0$, $\eta_1$, $\eta_2$ are the empirical average of $\breve{\eta}_0/\breve{\eta}_1$, $\breve{\eta}_1/\breve{\eta}_2$, $\breve{\eta}_2/\breve{\eta}_0$; $\breve{R}(\breve{\eta}_{opt})$ is the empirical average of  estimated RQAL of the estimated optimal regime. SE is the standard error estimation; CP is the average coverage probability; $R(\breve{\eta}_{opt})$ is the average of true RQAL of the estimated optimal regime; MR is the empirical mis-classification rate of the estimated optimal regime. The numbers in brackets are the empirical standard deviation. 
 
 The undersmoothed highly adaptive lasso outperforms the logistic regression and random forest based estimators in terms of bias, coverage probabilities and misclassification rates. Specifically, the highly adaptive lasso leads to an  estimator with smallest bias in $\hat{R}(\hat{\eta}_{opt})$, $\tilde{R}(\tilde{\eta}_{opt})$, ${R}(\hat{\eta}_{opt})$ and ${R}(\tilde{\eta}_{opt})$ and the coverage probabilities are close to the nominal rate. The misclasification rate of the highly adaptive lasso is also substantially lower than the other two estimators. Figure \ref{fg:4.1} (second row) displays the coverage probabilities where the dashed line represents the nominal coverage of 0.95. The random forest results in coverage probabilities as low as 0.3 with $n=250$ and $K=25$. The highly adaptive lasso estimator has the coverage probability of 0.95 for the bias corrected estimator for $n=500$ while the coverage probabilities for the logistic regression and the random forest are 0.77 and 0.54, respectively.

Similar to Section~\ref{sim:corect}, we provide additional information about the distribution of the estimators in both K=6 and 25 in  Section A.4 of the Supplementary Material (Figures \ref{fg:s3} and \ref{fg:s4}).  The figures provide insight about the distribution of (a) the true restricted quality-adjusted lifetime under different estimated optimal regimes (i.e., $R(\hat{\eta}_{opt})$ and $R(\tilde{\eta}_{opt})$); (b)  the estimated restricted quality-adjusted lifetime under the estimated optimal regime (i.e., $\hat R(\hat{\eta}_{opt})$ and $\tilde R(\tilde{\eta}_{opt})$); and, (c) the misclassification rates. The plots does not show any evidence of deviating from normal distribution of our estimators, thereby, supporting our theoretical results. The highly adaptive lasso outperforms the logistic regression and the random forest for both sample sized 250 and 500. The importance of using  the highly adaptive lasso is more pronounced when there are more stages (i.e., K=25 in Figures \ref{fg:s4}).

In Section A.5 of the Supplementary Material, we provide additional simulation studies to compare the performance of the undersmoothed and the cross-validated highly adaptive lasso. The results show that while the cross-validated based estimator results in severely under-covered confidence intervals (as low as 69\%), the coverage of the undersmoothed based estimator is close to the nominal rate. 

\section{Application: The effect of PEG tube in patients with ALS } \label{sec:real}

\subsection{Existing results}

\citet{atassi2011advanced}  used marginal structural models to assess the causal effect of PEG on survival time among  patients with ALS \citep{robins2000marginal}. They concluded that PEG placement was associated with a 28\% increase in the hazard of death, permanent assisted ventilation (PAV), or tracheostomy after accounting for confounding from vital capacity and functional capability. \citet{mcdonnell2017causal} also found that PEG placement may harm survival with no quality of life related benefit for people with ALS by considering marginal structural models and structural nested models \citep{van2005multidisciplinary, hernan2005structural}. There are also a number of studies on elderly subjects which  did not find evidence to support PEG feeding \citep{callahan2000outcomes, finucane2007tube}. Despite the lack of convincing evidence on the benefits of PEG tube placement, it is still widely used and considered as the current standard of care \citep{rabeneck1997ethically, rowland2001amyotrophic, miller2009practice, efns2012efns}.

Most of the existing substantive literature has focused on the average effect of PEG on survival time or quality of life among ALS patients. However, in practice,  an alternative clinical endpoint is a combination of the quantity and quality of life \citep{rabeneck1997ethically}. For example, for a subgroup of patients, PEG insertion may increase the survival time but reduce the quality of life drastically; conversely, delaying PEG insertion may improve quality of life but decrease the survival time. This investigation is motivated by a recent paper by Dr. David Schoenfeld and colleagues \citep{mcdonnell2017causal} that concluded PEG insertion has no benefit for survival or quality of life outcomes.  

\subsection{Data source}

We use the data from a double-blinded, placebo-controlled randomized clinical trial
(NCT00349622) designed to assess ceftriaxone's effects on survival in patients with ALS \citep{berry2013design, cudkowicz2014safety}. From 2006 to 2012, the ceftriaxone trial enrolled  513 participants with
laboratory-supported probable, probable, or definite ALS according
to the El Escorial criteria \citep{brooks2000escorial}. Patients visit the clinic every four weeks by protocol, and may receive PEG based on their disease progression and status.
 Therefore, PEG insertion is not randomized and its effect can be confounded by patients' characteristics. 


\subsection{Our approach}

We are using data of 481 samples from a multicenter placebo-controlled RCT (NCT00349622) designed to assess ceftriaxone's effects on survival
and functional decline in patients with ALS. Per protocol, participants completed assessments at baseline; weeks 1, 2, and 4; and every four weeks thereafter. Dates of PEG placement were recorded. To simplify the analysis,  we only consider the first 120 weeks (i.e., at most 30 time points) data, because more than 95\% of patients are dead or censored before this time points. 
Censoring is  defined at the time after more than a 17-week gap between the 2 ALS Functional Rating Scale (ALSFRS-R) assessments or more than a 33-week gap
between 2 BMI, FVC, grip strength, or ALSSQOL measurements \citep{mcdonnell2017causal}.
We set the interval between two assessments as $G = 4$. Thus, we have a $K = 30$ design. Baseline characteristics considered for confounding included sex, age,
disease duration, site of symptom onset, respiratory comorbidities
(history and prevalence at screening), cardiovascular comorbidities
(history and prevalence at screening), use of riluzole, use of
ceftriaxone, ALSFRS-R total
score and four subscales (bulbar, breathing, fine motor, and gross
motor), percent change in body mass index (BMI) from screening,
forced vital capacity (FVC), grip strength using handheld
dynamometry, and the ALSSQOL. Survival was defined as the time in weeks from screening until death, PAV, or tracheostomy. Quality of life (QOL) was measured
using the average total score of the ALS-Specific Quality of Life
(ALSSQOL) inventory, a 59-item patient-reported assessment
that evaluates perceived QOL concerning negative emotion, interaction with people and the environment, intimacy, religiosity, physical symptoms, and bulbar function. The utility coefficient map is defined as $\mathcal{Q} := \mathrm{ALSSQOL(t)}/\max_t \mathrm{ALSSQOL(t)}$.  Here, we focus on maximizing RQAL as the outcome of interest while the restricted value for QAL is set as $L_U = 91.12$ by picking the 95\% quantile of the observed QAL. According to experts' advice, two important time-dependent clinical variables, percent change in BMI from screening and FVC, are considered in the treatment regime $g^\eta$, which means  the PEG insertion is recommended based on the value of the linear combination of percent change of BMI and FVC. 

Table~\ref{app:tb:logit} in Section \ref{app:addtable} of the Supplementary Material shows the estimated coefficients of the measured variables on the probability of PEG tube insertion and censoring on logit scale.  Based on these results, the ALS bulbar and FVC are the most significant variables in deciding for PEG placement such that patients with higher values of bulbar and FVC are less likely to receive PEG. Also, the stage (i.e., number of months since the beginning of the follow-up), site of onset, Ceftriaxone, and ALS total score contribute significantly in hazard of censoring.  

To estimate the optimal treatment length strategy, we apply the proposed method using logistic regression and highly adaptive lasso for the hazard models. The estimate nuisance parameters are plugged into the value function \eqref{eq:rqaldirect} and \eqref{eq:rqalsmooth} and maximized to obtain the estimated optimal strategy. We find that logistic regression and highly adaptive lasso return similar RQAL estimation under different regimes. The only difference is the effect of BMI reduction in the estimated optimal regimes. The results of estimated optimal strategy $g^{\hat{\eta}_{opt}}$ and $g^{\tilde{\eta}_{opt}}$ are summarized in Table~\ref{tb:est-opt}. Since these two estimated regimes are nearly identical, we consider only $g^{\hat{\eta}_{opt}}$ for simplicity of representation. The coefficients for FVC and BMI reduction are negative, which means $g^{\hat{\eta}_{opt}}$ tends to recommend PEG when both FVC and BMI reduction decrease (when BMI decreases, this is a negative number, and thus, the larger is the score, the better is the patient), and the direction of these parameters coincides with the experts' knowledge. The FVC variable is on a large scale, which explains the small coefficient.

\begin{figure}[t]
\centering
\captionsetup{font=small}
  \includegraphics[width=0.4\textwidth]{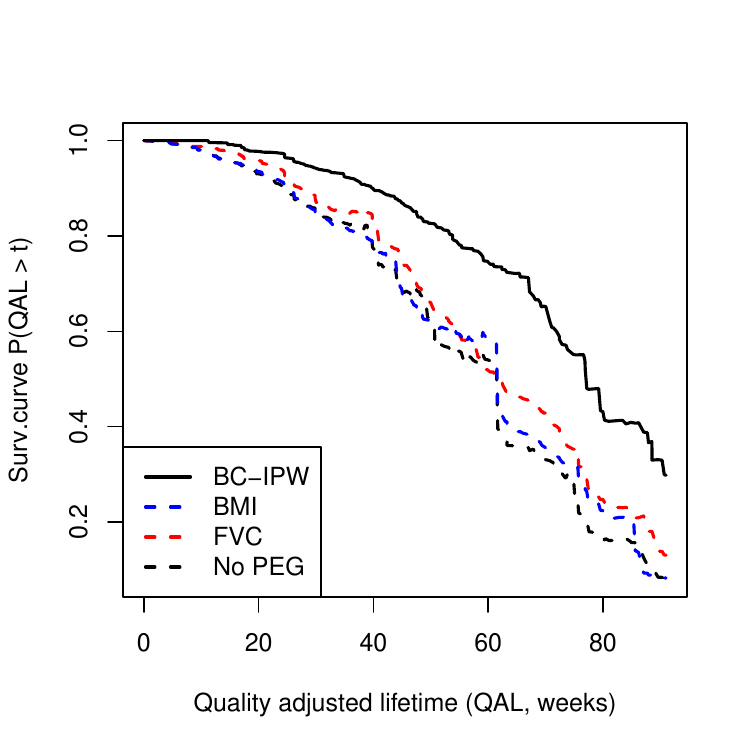}
   \caption{The estimated survival curve of QAL under regime $g^{\tilde{\eta}_{opt}}$ (solid line), never insertion (black dashed line), always insertion (dotted line), FVC regime (red dashed line), and BMI regime (blue dashed line) with logistic hazard models. }
   \label{fg:4.4}
\end{figure}

We  compare the RQAL estimation with four other treatment regimes: immediate PEG insertion ($R(1)$), never PEG insertion ($R(0)$), insert the PEG when FVC score is less than 50 ($R(F)$) and insert the PEG when BMI changes -10\% or greater ($R(B)$).  The latter two treatment regimes are currently used in clinical practice. The estimated regime $g^{\hat{\eta}_{opt}}$ is significantly better than the always/never insertion strategies by looking at the point estimation and 95\% confidence intervals of RQAL.   Moreover, the estimated RQAL under the estimated treatment regime using highly adaptive lasso is roughly 16 and 11 points higher than the RQAL obtained by the existing treatment rules $\hat{R}(B)$ and $\hat{R}(F)$, respectively. The difference between $\hat R ({\hat{\eta}_{opt}})$ and $\hat{R}(B)$ is also statistically significant. Regardless, the estimated regime provides clinically significant improvement on RQAL  compared with both existing rules.

The IPW and Bias-correction estimators are equal for always/never insertion regimes. The reason is because we use $\eta = (1, 0, 0)$ and $(-1, 0, 0)$ as the representative regimes with only intercept, and $sd(\mathcal{Z})$, a scale factor in the bandwidth $h$, would be 0 under these regimes. With 0 bandwidth,  the bias-corrected estimator reduces to the IPW estimator. Another issue is that the standard error estimation of the immediate insertion regime is pretty large. This is expected since few people (2.5\%) in this data set start the treatment at stage 1, and this fact results in the high variability of the proposed IPW estimator. This phenomenon can also be seen from Figure~\ref{fg:4.4} about the overall estimated survival curve.

\begin{table}[H]
\centering
\captionsetup{font=small}
\caption{Estimated optimal treatment length strategy for PEG inssertion.}\label{tb:est-opt}
\scalebox{0.75}{
\begin{tabular}{llllllllll}
\hline
            PS           &   Est    & (Int.) & FVC    & BMI Red. & $\hat{R}(\hat{\eta}_{opt})$           & $\hat{R}(1)$        & $\hat{R}(0)$  &     $\hat{R}(F)$  &     $\hat{R}(B)$ \\ \hline
\multirow{2}{*}{Logit} & IPW   & 0.995       & -0.011 & -0.097        & 72.24(4.75) & 33.07(8.08) & 57.30(2.51) & 61.26(2.57) & 58.51(3.17)\\
                       & BC-IPW & 0.996       & -0.014 & -0.084        & 71.95(4.14) & 33.07(8.08) & 57.30(2.51) & 60.84(2.57) & 58.52(3.17)\\ \hline
\multirow{2}{*}{HAL}   & IPW   & 0.998       & -0.012 & -0.066        & 76.09(4.29) & 23.59(7.08) & 60.24(2.16) & 65.01(2.53) & 60.66(2.39)   \\
                       & BC-IPW & 0.998       & -0.012 & -0.054        & 75.05(4.25) & 23.59(7.08) &  60.24(2.16) & 64.70(2.53) & 60.66(2.39)  \\ \hline
\end{tabular}}
\end{table}

Figure~\ref{fg:4.4} shows the estimated survival curves of QAL under different regimes using logistic regression. For simplicity, we only plot the survival curve of the IPW estimator. The estimated survival curve under the estimated optimal strategy $g^{\hat{\eta}_{opt}}$ is uniformly better than under the other regimes, indicating that $g^{\hat{\eta}_{opt}}$ may lead to improved  QAL if used in ALS study. Additionally, the estimated survival curve  shows that the regime that no patients insert the PEG leads to better patient outcomes than the regime that all patients insert the PEG immediately, which partially explains why the previous studies by using MSM conclude that PEG insertion is harmful to survival for patients with ALS.  

\section{Discussion} \label{sec:dis}

  There are several directions for future work with this interesting problem. The proposed methods directly maximize the estimated restricted quality-adjusted lifetime using a genetic algorithm and it is computationally expensive in multi-stage cases. It is also challenging to handle high-dimensional covariate space in $g^{\eta}$. The extensions of the proposed method to allow for high-dimensional covariates could be considered. Also, the proposed method only search optimal strategies in a restricted functional class, say the linear classifiers with the linear combination of some covariates. This type of regimes are easy to interpret but can be sub-optimal when the actual optimal regime is complex. Thus, the trade-off between interpretability and accuracy of the estimated optimal regime and extending the search space of optimal regime to more general functional class would be considered.  

\bibliographystyle{rss}
\bibliography{sample}

\begin{thebibliography}{67}
\expandafter\ifx\csname natexlab\endcsname\relax\def\natexlab#1{#1}\fi
\expandafter\ifx\csname url\endcsname\relax
  \def\url#1{\texttt{#1}}\fi
\expandafter\ifx\csname urlprefix\endcsname\relax\def\urlprefix{URL: }\fi

\bibitem[{Andersen et~al.(2012)Andersen, Abrahams, Borasio, de~Carvalho, Chio,
  Van~Damme, Hardiman, Kollewe, Morrison et~al.}]{efns2012efns}
Andersen, P.~M., Abrahams, S., Borasio, G.~D., de~Carvalho, M., Chio, A.,
  Van~Damme, P., Hardiman, O., Kollewe, K., Morrison, K.~E. et~al. (2012) Efns
  guidelines on the clinical management of amyotrophic lateral sclerosis
  (mals)--revised report of an efns task force.
\newblock \textit{European journal of neurology}, \textbf{19}, 360--375.

\bibitem[{Atassi et~al.(2011)Atassi, Cudkowicz and
  Schoenfeld}]{atassi2011advanced}
Atassi, N., Cudkowicz, M.~E. and Schoenfeld, D.~A. (2011) Advanced statistical
  methods to study the effects of gastric tube and non-invasive ventilation on
  functional decline and survival in amyotrophic lateral sclerosis.
\newblock \textit{Amyotrophic Lateral Sclerosis}, \textbf{12}, 272--277.

\bibitem[{Bai et~al.(2013)Bai, Tsiatis and O'Brien}]{bai2013doubly}
Bai, X., Tsiatis, A.~A. and O'Brien, S.~M. (2013) Doubly-robust estimators of
  treatment-specific survival distributions in observational studies with
  stratified sampling.
\newblock \textit{Biometrics}, \textbf{69}, 830--839.

\bibitem[{Bang and Tsiatis(2002)}]{bang2002median}
Bang, H. and Tsiatis, A.~A. (2002) Median regression with censored cost data.
\newblock \textit{Biometrics}, \textbf{58}, 643--649.

\bibitem[{Benkeser et~al.(2017)Benkeser, Carone, Laan and
  Gilbert}]{benkeser2017doubly}
Benkeser, D., Carone, M., Laan, M. V.~D. and Gilbert, P. (2017) Doubly robust
  nonparametric inference on the average treatment effect.
\newblock \textit{Biometrika}, \textbf{104}, 863--880.

\bibitem[{Benkeser and Van Der~Laan(2016)}]{benkeser2016highly}
Benkeser, D. and Van Der~Laan, M. (2016) The highly adaptive lasso estimator.
\newblock In \textit{Proceedings of the... International Conference on Data
  Science and Advanced Analytics. IEEE International Conference on Data Science
  and Advanced Analytics}, vol. 2016, 689. NIH Public Access.

\bibitem[{Van~den Berg et~al.(2005)Van~den Berg, Kalmijn, Lindeman, Veldink,
  De~Visser, Van~der Graaff, Wokke and Van~den Berg}]{van2005multidisciplinary}
Van~den Berg, J., Kalmijn, S., Lindeman, E., Veldink, J., De~Visser, M.,
  Van~der Graaff, M., Wokke, J. and Van~den Berg, L. (2005) Multidisciplinary
  als care improves quality of life in patients with als.
\newblock \textit{Neurology}, \textbf{65}, 1264--1267.

\bibitem[{Berry et~al.(2013)Berry, Shefner, Conwit, Schoenfeld, Keroack,
  Felsenstein, Krivickas, David, Vriesendorp, Pestronk
  et~al.}]{berry2013design}
Berry, J.~D., Shefner, J.~M., Conwit, R., Schoenfeld, D., Keroack, M.,
  Felsenstein, D., Krivickas, L., David, W.~S., Vriesendorp, F., Pestronk, A.
  et~al. (2013) Design and initial results of a multi-phase randomized trial of
  ceftriaxone in amyotrophic lateral sclerosis.
\newblock \textit{PloS one}, \textbf{8}, e61177.

\bibitem[{Bibaut and van~der Laan(2019)}]{bibaut2019fast}
Bibaut, A.~F. and van~der Laan, M.~J. (2019) Fast rates for empirical risk
  minimization over c$\backslash$adl$\backslash$ag functions with bounded
  sectional variation norm.
\newblock \textit{arXiv preprint arXiv:1907.09244}.

\bibitem[{Brooks et~al.(2000)Brooks, Miller, Swash and
  Munsat}]{brooks2000escorial}
Brooks, B.~R., Miller, R.~G., Swash, M. and Munsat, T.~L. (2000) El escorial
  revisited: revised criteria for the diagnosis of amyotrophic lateral
  sclerosis.
\newblock \textit{Amyotrophic lateral sclerosis and other motor neuron
  disorders}, \textbf{1}, 293--299.

\bibitem[{Cai and van~der Laan(2020{\natexlab{a}})}]{cai2020nonparametric}
Cai, W. and van~der Laan, M. (2020{\natexlab{a}}) Nonparametric bootstrap
  inference for the targeted highly adaptive least absolute shrinkage and
  selection operator (lasso) estimator.
\newblock \textit{The International Journal of Biostatistics}, \textbf{16}.

\bibitem[{Cai and van~der Laan(2020{\natexlab{b}})}]{cai2020one}
Cai, W. and van~der Laan, M.~J. (2020{\natexlab{b}}) One-step targeted maximum
  likelihood estimation for time-to-event outcomes.
\newblock \textit{Biometrics}, \textbf{76}, 722--733.

\bibitem[{Callahan et~al.(2000)Callahan, Haag, Weinberger, Tierney, Buchanan,
  Stump and Nisi}]{callahan2000outcomes}
Callahan, C.~M., Haag, K.~M., Weinberger, M., Tierney, W.~M., Buchanan, N.~N.,
  Stump, T.~E. and Nisi, R. (2000) Outcomes of percutaneous endoscopic
  gastrostomy among older adults in a community setting.
\newblock \textit{Journal of the American Geriatrics Society}, \textbf{48},
  1048--1054.

\bibitem[{Chakraborty and Moodie(2013)}]{chakraborty2013statistical}
Chakraborty, B. and Moodie, E. (2013) \textit{Statistical methods for dynamic
  treatment regimes}.
\newblock Springer.

\bibitem[{Chen and Tsiatis(2001)}]{chen2001causal}
Chen, P.-Y. and Tsiatis, A.~A. (2001) Causal inference on the difference of the
  restricted mean lifetime between two groups.
\newblock \textit{Biometrics}, \textbf{57}, 1030--1038.

\bibitem[{Chernozhukov et~al.(2017)Chernozhukov, Chetverikov, Demirer, Duflo,
  Hansen and Newey}]{chernozhukov2017double}
Chernozhukov, V., Chetverikov, D., Demirer, M., Duflo, E., Hansen, C. and
  Newey, W. (2017) Double/debiased/neyman machine learning of treatment
  effects.
\newblock \textit{American Economic Review}, \textbf{107}, 261--65.

\bibitem[{Coyle et~al.(2019)Coyle, Hejazi and {van der
  Laan}}]{coyle2019hal9001}
Coyle, J.~R., Hejazi, N.~S. and {van der Laan}, M.~J. (2019) \textit{{hal9001}:
  The scalable highly adaptive lasso}.
\newblock \urlprefix\url{https://doi.org/10.5281/zenodo.3558314}.
\newblock {R} package version 0.2.5.

\bibitem[{Cudkowicz et~al.(2014)Cudkowicz, Titus, Kearney, Yu, Sherman,
  Schoenfeld, Hayden, Shui, Brooks, Conwit et~al.}]{cudkowicz2014safety}
Cudkowicz, M.~E., Titus, S., Kearney, M., Yu, H., Sherman, A., Schoenfeld, D.,
  Hayden, D., Shui, A., Brooks, B., Conwit, R. et~al. (2014) Safety and
  efficacy of ceftriaxone for amyotrophic lateral sclerosis: a multi-stage,
  randomised, double-blind, placebo-controlled trial.
\newblock \textit{The Lancet Neurology}, \textbf{13}, 1083--1091.

\bibitem[{D{\'\i}az et~al.(2018)D{\'\i}az, Savenkov and
  Ballman}]{diaz2018targeted}
D{\'\i}az, I., Savenkov, O. and Ballman, K. (2018) Targeted learning ensembles
  for optimal individualized treatment rules with time-to-event outcomes.
\newblock \textit{Biometrika}, \textbf{105}, 723--738.

\bibitem[{Ertefaie et~al.(2023)Ertefaie, Hejazi and van~der
  Laan}]{ertefaie2023nonparametric}
Ertefaie, A., Hejazi, N.~S. and van~der Laan, M.~J. (2023) Nonparametric
  inverse-probability-weighted estimators based on the highly adaptive lasso.
\newblock \textit{Biometrics}, \textbf{79}, 1029--1041.

\bibitem[{Fine and Gelber(2001)}]{fine2001joint}
Fine, J.~P. and Gelber, R.~D. (2001) Joint regression analysis of survival and
  quality-adjusted survival.
\newblock \textit{Biometrics}, \textbf{57}, 376--382.

\bibitem[{Finucane et~al.(2007)Finucane, Christmas and Leff}]{finucane2007tube}
Finucane, T.~E., Christmas, C. and Leff, B.~A. (2007) Tube feeding in dementia:
  how incentives undermine health care quality and patient safety.
\newblock \textit{Journal of the American Medical Directors Association},
  \textbf{8}, 205--208.

\bibitem[{Gelber et~al.(1989)Gelber, Gelman and Goldhirsch}]{gelber1989quality}
Gelber, R.~D., Gelman, R.~S. and Goldhirsch, A. (1989) A
  quality-of-life-oriented endpoint for comparing therapies.
\newblock \textit{Biometrics}, \textbf{45}, 781--795.

\bibitem[{Gill et~al.(1995)Gill, {van} {der}~Laan and
  Wellner}]{gill1995inefficient}
Gill, R.~D., {van} {der}~Laan, M.~J. and Wellner, J.~A. (1995) Inefficient
  estimators of the bivariate survival function for three models.
\newblock In \textit{Annales de l'IHP Probabilit{\'e}s et statistiques},
  vol.~31, 545--597.

\bibitem[{Glasziou et~al.(1990)Glasziou, Simes and
  Gelber}]{glasziou1990quality}
Glasziou, P., Simes, R. and Gelber, R. (1990) Quality adjusted survival
  analysis.
\newblock \textit{Statistics in medicine}, \textbf{9}, 1259--1276.

\bibitem[{Goldberg and Kosorok(2012)}]{goldberg2012q}
Goldberg, Y. and Kosorok, M.~R. (2012) Q-learning with censored data.
\newblock \textit{Annals of statistics}, \textbf{40}, 529.

\bibitem[{Goldhirsch et~al.(1989)Goldhirsch, Gelber, Simes, Glasziou and
  Coates}]{goldhirsch1989costs}
Goldhirsch, A., Gelber, R.~D., Simes, R.~J., Glasziou, P. and Coates, A.~S.
  (1989) Costs and benefits of adjuvant therapy in breast cancer: a
  quality-adjusted survival analysis.
\newblock \textit{Journal of Clinical Oncology}, \textbf{7}, 36--44.

\bibitem[{Heller(2007)}]{heller2007smoothed}
Heller, G. (2007) Smoothed rank regression with censored data.
\newblock \textit{Journal of the American Statistical Association},
  \textbf{102}, 552--559.

\bibitem[{Hern{\'a}n et~al.(2005)Hern{\'a}n, Cole, Margolick, Cohen and
  Robins}]{hernan2005structural}
Hern{\'a}n, M.~A., Cole, S.~R., Margolick, J., Cohen, M. and Robins, J.~M.
  (2005) Structural accelerated failure time models for survival analysis in
  studies with time-varying treatments.
\newblock \textit{Pharmacoepidemiology and drug safety}, \textbf{14}, 477--491.

\bibitem[{Huang and Louis(1998)}]{huanglouis1998}
Huang, Y. and Louis, T.~A. (1998) Nonparametric estimation of the joint
  distribution of survival time and mark variables.
\newblock \textbf{85}, 785.

\bibitem[{Hubbard et~al.(2000)Hubbard, Van Der~Laan and
  Robins}]{hubbard2000nonparametric}
Hubbard, A.~E., Van Der~Laan, M.~J. and Robins, J.~M. (2000) Nonparametric
  locally efficient estimation of the treatment specific survival distribution
  with right censored data and covariates in observational studies.
\newblock In \textit{Statistical Models in Epidemiology, the Environment, and
  Clinical Trials}, 135--177. Springer.

\bibitem[{Jiang et~al.(2017)Jiang, Lu, Song and Davidian}]{jiang2017estimation}
Jiang, R., Lu, W., Song, R. and Davidian, M. (2017) On estimation of optimal
  treatment regimes for maximizing t-year survival probability.
\newblock \textit{Journal of the Royal Statistical Society: Series B
  (Statistical Methodology)}, \textbf{79}, 1165--1185.

\bibitem[{Klaassen(1987)}]{klaassen1987consistent}
Klaassen, C.~A. (1987) Consistent estimation of the influence function of
  locally asymptotically linear estimators.
\newblock \textit{The Annals of Statistics}, \textbf{15}, 1548--1562.

\bibitem[{Kosorok(2008)}]{kosorok2008introduction}
Kosorok, M.~R. (2008) \textit{Introduction to empirical processes and
  semiparametric inference.}
\newblock Springer.

\bibitem[{van~der Laan(2023)}]{van2023higher}
van~der Laan, M. (2023) Higher order spline highly adaptive lasso estimators of
  functional parameters: Pointwise asymptotic normality and uniform convergence
  rates.
\newblock \textit{arXiv preprint arXiv:2301.13354}.

\bibitem[{Van~der Laan(2014)}]{van2014targeted}
Van~der Laan, M.~J. (2014) Targeted estimation of nuisance parameters to obtain
  valid statistical inference.
\newblock \textit{The international journal of biostatistics}, \textbf{10},
  29--57.

\bibitem[{van~der Laan and Hubbard(1999)}]{van1999locally}
van~der Laan, M.~J. and Hubbard, A. (1999) Locally efficient estimation of the
  quality-adjusted lifetime distribution with right-censored data and
  covariates.
\newblock \textit{Biometrics}, \textbf{55}, 530--536.

\bibitem[{Leigh et~al.(2003)Leigh, Abrahams, Al-Chalabi, Ampong, Goldstein,
  Johnson, Lyall, Moxham, Mustfa, Rio et~al.}]{leigh2003management}
Leigh, P., Abrahams, S., Al-Chalabi, A., Ampong, M., Goldstein, L., Johnson,
  J., Lyall, R., Moxham, J., Mustfa, N., Rio, A. et~al. (2003) The management
  of motor neurone disease.
\newblock \textit{Journal of Neurology, Neurosurgery \& Psychiatry},
  \textbf{74}, iv32--iv47.

\bibitem[{Liaw et~al.(2002)Liaw, Wiener et~al.}]{liaw2002classification}
Liaw, A., Wiener, M. et~al. (2002) Classification and regression by
  randomforest.
\newblock \textit{R news}, \textbf{2}, 18--22.

\bibitem[{Mathus-Vliegen et~al.(1994)Mathus-Vliegen, Louwerse, Merkus, Tytgat
  and de~Jong}]{mathus1994percutaneous}
Mathus-Vliegen, L.~M., Louwerse, L.~S., Merkus, M.~P., Tytgat, G.~N. and
  de~Jong, J.~V. (1994) Percutaneous endoscopic gastrostomy in patients with
  amyotrophic lateral sclerosis and impaired pulmonary function.
\newblock \textit{Gastrointestinal endoscopy}, \textbf{40}, 463--469.

\bibitem[{McDonnell et~al.(2017)McDonnell, Schoenfeld, Paganoni and
  Atassi}]{mcdonnell2017causal}
McDonnell, E., Schoenfeld, D., Paganoni, S. and Atassi, N. (2017) Causal
  inference methods to study gastric tube use in amyotrophic lateral sclerosis.
\newblock \textit{Neurology}, \textbf{89}, 1483--1489.

\bibitem[{Mebane~Jr et~al.(2011)Mebane~Jr, Sekhon et~al.}]{mebane2011genetic}
Mebane~Jr, W.~R., Sekhon, J.~S. et~al. (2011) Genetic optimization using
  derivatives: the rgenoud package for r.
\newblock \textit{Journal of Statistical Software}, \textbf{42}, 1--26.

\bibitem[{Miller et~al.(2009)Miller, Jackson, Kasarskis, England, Forshew,
  Johnston, Kalra, Katz, Mitsumoto, Rosenfeld et~al.}]{miller2009practice}
Miller, R.~G., Jackson, C.~E., Kasarskis, E.~J., England, J., Forshew, D.,
  Johnston, W., Kalra, S., Katz, J., Mitsumoto, H., Rosenfeld, J. et~al. (2009)
  Practice parameter update: the care of the patient with amyotrophic lateral
  sclerosis: drug, nutritional, and respiratory therapies (an evidence-based
  review): report of the quality standards subcommittee of the american academy
  of neurology.
\newblock \textit{Neurology}, \textbf{73}, 1218--1226.

\bibitem[{Mukherjee et~al.(2017)Mukherjee, Newey and
  Robins}]{mukherjee2017semiparametric}
Mukherjee, R., Newey, W.~K. and Robins, J.~M. (2017) Semiparametric efficient
  empirical higher order influence function estimators.
\newblock \textit{arXiv preprint arXiv:1705.07577}.

\bibitem[{Rabeneck et~al.(1997)Rabeneck, McCullough and
  Wray}]{rabeneck1997ethically}
Rabeneck, L., McCullough, L.~B. and Wray, N.~P. (1997) Ethically justified,
  clinically comprehensive guidelines for percutaneous endoscopic gastrostomy
  tube placement.
\newblock \textit{The Lancet}, \textbf{349}, 496--498.

\bibitem[{Robins(1986)}]{robins1986new}
Robins, J. (1986) A new approach to causal inference in mortality studies with
  a sustained exposure period---application to control of the healthy worker
  survivor effect.
\newblock \textit{Mathematical modelling}, \textbf{7}, 1393--1512.

\bibitem[{Robins et~al.(2008{\natexlab{a}})Robins, Li, Tchetgen, van~der Vaart
  et~al.}]{robins2008higher}
Robins, J., Li, L., Tchetgen, E., van~der Vaart, A. et~al. (2008{\natexlab{a}})
  Higher order influence functions and minimax estimation of nonlinear
  functionals.
\newblock \textit{Probability and statistics: essays in honor of David A.
  Freedman}, \textbf{2}, 335--421.

\bibitem[{Robins et~al.(2008{\natexlab{b}})Robins, Orellana and
  Rotnitzky}]{robins2008estimation}
Robins, J., Orellana, L. and Rotnitzky, A. (2008{\natexlab{b}}) Estimation and
  extrapolation of optimal treatment and testing strategies.
\newblock \textit{Statistics in medicine}, \textbf{27}, 4678--4721.

\bibitem[{Robins(2004)}]{robins2004optimal}
Robins, J.~M. (2004) Optimal structural nested models for optimal sequential
  decisions.
\newblock In \textit{Proceedings of the second seattle Symposium in
  Biostatistics}, 189--326. Springer.

\bibitem[{Robins et~al.(2000)Robins, Hernan and Brumback}]{robins2000marginal}
Robins, J.~M., Hernan, M.~A. and Brumback, B. (2000) Marginal structural models
  and causal inference in epidemiology.

\bibitem[{Robins et~al.(2017)Robins, Li, Mukherjee, Tchetgen and van~der
  Vaart}]{robins2017minimax}
Robins, J.~M., Li, L., Mukherjee, R., Tchetgen, E.~T. and van~der Vaart, A.
  (2017) Minimax estimation of a functional on a structured high-dimensional
  model.
\newblock \textit{The Annals of Statistics}, \textbf{45}, 1951--1987.

\bibitem[{Rosenbaum and Rubin(1983)}]{rosenbaum1983central}
Rosenbaum, P.~R. and Rubin, D.~B. (1983) The central role of the propensity
  score in observational studies for causal effects.
\newblock \textit{Biometrika}, \textbf{70}, 41--55.

\bibitem[{Rowland and Shneider(2001)}]{rowland2001amyotrophic}
Rowland, L.~P. and Shneider, N.~A. (2001) Amyotrophic lateral sclerosis.
\newblock \textit{New England Journal of Medicine}, \textbf{344}, 1688--1700.

\bibitem[{Rytgaard et~al.(2022)Rytgaard, Gerds and van~der
  Laan}]{rytgaard2022continuous}
Rytgaard, H.~C., Gerds, T.~A. and van~der Laan, M.~J. (2022) Continuous-time
  targeted minimum loss-based estimation of intervention-specific mean
  outcomes.
\newblock \textit{The Annals of Statistics}, \textbf{50}, 2469--2491.

\bibitem[{Strijbos et~al.(2017)Strijbos, Hofstede, Keszthelyi, Masclee and
  Gilissen}]{strijbos2017percutaneous}
Strijbos, D., Hofstede, J., Keszthelyi, D., Masclee, A.~A. and Gilissen, L.~P.
  (2017) Percutaneous endoscopic gastrostomy under conscious sedation in
  patients with amyotrophic lateral sclerosis is safe: an observational study.
\newblock \textit{European journal of gastroenterology \& hepatology},
  \textbf{29}, 1303--1308.

\bibitem[{Tian et~al.(2014)Tian, Alizadeh, Gentles and
  Tibshirani}]{tian2014simple}
Tian, L., Alizadeh, A.~A., Gentles, A.~J. and Tibshirani, R. (2014) A simple
  method for estimating interactions between a treatment and a large number of
  covariates.
\newblock \textit{Journal of the American Statistical Association},
  \textbf{109}, 1517--1532.

\bibitem[{Van~der Vaart(2000)}]{van2000asymptotic}
Van~der Vaart, A.~W. (2000) \textit{Asymptotic statistics}, vol.~3.
\newblock Cambridge university press.

\bibitem[{Van~der Vaart et~al.(2006)Van~der Vaart, Dudoit and {van}
  {der}~Laan}]{van2006oracle}
Van~der Vaart, A.~W., Dudoit, S. and {van} {der}~Laan, M.~J. (2006) Oracle
  inequalities for multi-fold cross validation.
\newblock \textit{Statistics \& Decisions}, \textbf{24}, 351--371.

\bibitem[{Vaart and Wellner(1996)}]{vaart1996weak}
Vaart, A.~W. and Wellner, J.~A. (1996) \textit{Weak convergence and empirical
  processes: with applications to statistics}.
\newblock Springer.

\bibitem[{{van} {der}~Laan and Dudoit(2003)}]{van2003unified}
{van} {der}~Laan, M.~J. and Dudoit, S. (2003) Unified cross-validation
  methodology for selection among estimators and a general cross-validated
  adaptive epsilon-net estimator: Finite sample oracle inequalities and
  examples.

\bibitem[{Wang and Zhao(2006)}]{wang2006regression}
Wang, H. and Zhao, H. (2006) Regression analysis of mean quality-adjusted
  lifetime with censored data.
\newblock \textit{Biostatistics}, \textbf{8}, 368--382.

\bibitem[{Yang and Ding(2017)}]{yang2017asymptotic}
Yang, S. and Ding, P. (2017) Asymptotic causal inference with observational
  studies trimmed by the estimated propensity scores.
\newblock \textit{arXiv preprint arXiv:1704.00666}.

\bibitem[{Zarei et~al.(2015)Zarei, Carr, Reiley, Diaz, Guerra, Altamirano,
  Pagani, Lodin, Orozco and Chinea}]{zarei2015comprehensive}
Zarei, S., Carr, K., Reiley, L., Diaz, K., Guerra, O., Altamirano, P.~F.,
  Pagani, W., Lodin, D., Orozco, G. and Chinea, A. (2015) A comprehensive
  review of amyotrophic lateral sclerosis.
\newblock \textit{Surgical neurology international}, \textbf{6}.

\bibitem[{Zhang et~al.(2012)Zhang, Tsiatis, Laber and
  Davidian}]{zhang2012robust}
Zhang, B., Tsiatis, A.~A., Laber, E.~B. and Davidian, M. (2012) A robust method
  for estimating optimal treatment regimes.
\newblock \textit{Biometrics}, \textbf{68}, 1010--1018.

\bibitem[{Zhao and Tsiatis(1997)}]{zhao1997consistent}
Zhao, H. and Tsiatis, A.~A. (1997) A consistent estimator for the distribution
  of quality adjusted survival time.
\newblock \textit{Biometrika}, \textbf{84}, 339--348.

\bibitem[{Zhao et~al.(2014)Zhao, Zeng, Laber, Song, Yuan and
  Kosorok}]{zhao2014doubly}
Zhao, Y.-Q., Zeng, D., Laber, E.~B., Song, R., Yuan, M. and Kosorok, M.~R.
  (2014) Doubly robust learning for estimating individualized treatment with
  censored data.
\newblock \textit{Biometrika}, \textbf{102}, 151--168.

\bibitem[{Zheng and van~der Laan(2011)}]{zheng2011cross}
Zheng, W. and van~der Laan, M.~J. (2011) Cross-validated targeted
  minimum-loss-based estimation.
\newblock In \textit{Targeted Learning}, 459--474. Springer.

\end{thebibliography}

\pagebreak


\appendix

\setcounter{section}{0}
\renewcommand*{\thesection}{S\arabic{section}}
\renewcommand*{\thesubsection}{S\arabic{section}.\arabic{subsection}}
\renewcommand{\theequation}{S\arabic{equation}}
\renewcommand{\thefigure}{S\arabic{figure}}
\renewcommand{\thetable}{S\arabic{table}}
\renewcommand{\thetheorem}{S\arabic{theorem}}
\renewcommand{\thelemma}{S\arabic{lemma}}
\renewcommand{\bibnumfmt}[1]{[S#1]}
\renewcommand{\citenumfont}[1]{S#1}
\setcounter{page}{1}

\section{Appendix} \label{app-ch3}
We first list the regularity conditions needed for this section.
\par
\medskip
{\bf Regularity Conditions:}
\begin{enumerate}
    \item[(C.1)]  The covariates $Z_j$ are bounded. 
    \item[(C.2)]  $H_a(x;\bar{z})$ is bounded away from 0 and 1 for all possible values of $x$ and $\bar{z}$; $H_c(x; \bar{a}, \bar{z})$ is bounded away from 0 and 1 for all possible values of $(x, \bar{a}, \bar{z})$.
    \item[(C.3)] The treatment assignment model and censoring model are both correctly specified by logistic regression models, e.g., $P(C_j = 1 \mid \bar{Z}_{j}, C_{j-1} = 0) = 1/\left\{1 + \exp(\beta_j^{\top}\bar Z_j)\right\}$ and $P(A_j = 1\mid \bar{Z}_{j}, A_{j-1} = 0) = 1/\left\{1 + \exp(\theta_j^{\top}\bar Z_j)\right\}$. 
    \item[(C.4)] $|\Delta_{c}(x) / H_a(x, \bar{Z})H_c(x, \bar{A}, \bar{Z})| < \infty$, $|\Delta_{c}(x)I(U > x)/ H_a(x, \bar{Z})H_c(x, \bar{A}, \bar{Z})| < \infty$ uniformly for $x \in (0, L_U]$ and $\eta$.
    \item[(C.5)] $n\nu \rightarrow \infty$ and $n\nu^4 \rightarrow 0$ as $n \rightarrow \infty$.
\end{enumerate}

Condition~(C.1) is a common assumption of convenience used to bound the score function and influence curves.  Condition~(C.2) is required to bound the weights $H_a$ and $H_c$ away from zero such that their reciprocals are finite.
Condition~(C.3) implies that, for $j=1,\cdots,K$, $\hat\beta_j$ and $\hat\theta_j$ are consistent and asymptotic normal at the nominal $\sqrt{n}$-rate, i.e., $n^{1/2}(\hat{\beta}_j - \beta_j) = n^{1/2} \mathbb{P}_n \{IC_{j,\beta}(O)\} + o_p(1)$ and $n^{1/2}(\hat{\theta}_j - \theta_j) = n^{1/2} \mathbb{P}_n \{ IC_{j,\theta}(O)\} + o_p(1)$. Condition~(C.5) gives common conditions on the smoothing parameter 

\subsection{Proof of Theorem~\ref{ch3-theorem1}}
\begin{proof}

In this proof, we omit the superscript '$^{par}$' for simplicity, with the understanding that we are focusing on the parametric modeling of the nuisance functions.

For simplicity, we suppress the covariate information in $H_c(x; \bar{a}, \bar{z})$ and $H_a(x; \bar{z})$, and use $H_a(x)$ and $H_c(x)$ instead. We divide the proof into three steps.
 
\subsubsection*{Step 1: The consistency and asymptotic normality of $\hat{S}_U(x; \eta)$}
We first show the consistency of the direct IPW estimator. Fix a target time $x$ and note that $\Delta^\eta_{a}(x)I(U > x) = \Delta^\eta_{a}(x)I(U^{\eta} > x)$ under Assumption \ref{assump:basic}a.
Thus, $E[\Delta^\eta_{a}(x)I(U > x)] = S_U(x; \eta)$. 
Then, by an ordinary Weak Law of Large Numbers,
\[
\mathbb{P}_n \left[ \frac{\Delta^\eta_{a}(x)\Delta_{c}(x)I(U > x)}{\hat{H}_{a}(x)\hat{H}_{c}(x)} \right]
    \rightarrow_p   E\left[ \frac{\Delta^\eta_{a}(x)\Delta_{c}(x)I(U > x)}{H_{a}(x)H_{c}(x)} \right] \\
= S_U(x;\eta),
\]
 uniformly for $x \in (0, L_U]$ as $n \rightarrow \infty$ because of the correct specification of parametric propensity score models in Condition~(C.2). Similarly, we have $\mathbb{P}_n
    [\Delta^\eta_{a}(x)\Delta_{c}(x)/\hat{H}_{a}(x)\hat{H}_{c}(x)] \rightarrow_p E[\Delta^\eta_{a}(x)\Delta_{c}(x)/ H_{a}(x)H_{c}(x)] = 1$ uniformly for $x \in (0, L_U]$ as $n \rightarrow \infty$.  
Putting these two results together leads one to conclude that 
$\hat{S}_U(x;\eta)\rightarrow_p S_U(x;\eta)$ as $n\rightarrow\infty$.

We then show the asymptotic normality of $n^{1/2}\left\{\hat{S}_U(x;\eta) - S_U(x;\eta)\right\}$. Let $\hat{S}_U(x; \eta, \theta, \beta)$ denote an estimator where the nuisance functions are known.  Based on \eqref{eq:direct} and first order Taylor expansion, we have
\begin{align} \label{eq:asy prob} \tag{A.2}
    \begin{split}
        & n^{1/2}\{\hat{S}_U(x;\eta) - S_U(x;\eta)\} \\
        =& n^{1/2}\{\hat{S}_U(x; \eta, \theta, \beta) - S_U(x;\eta)\} + D_1^{\top}(x) n^{1/2}(\hat{\theta} - \theta) + D_2^{\top}(x) n^{1/2}(\hat{\beta} - \beta) + o_p(1)\\
        =& n^{-1/2} \frac{\sum_{i=1}^{n}   \frac{\Delta^\eta_{a,i}(x)\Delta_{c,i}(x)}{H_{a, i}(x;\theta)H_{c, i}(x; \beta)}I(U_i > x)}{1/n\sum_{i=1}^{n}   \frac{\Delta^\eta_{a,i}(x)\Delta_{c,i}(x)}{H_{a, i}(x;\theta)H_{c, i}(x; \beta)}} - S_U(x;\eta) \\
        &+ D_1^{\top}(x) n^{1/2}(\hat{\theta} - \theta) + D_2^{\top}(x) n^{1/2}(\hat{\beta} - \beta) + o_p(1) \\
        =& n^{-1/2} \frac{\sum_{i=1}^{n}   \left\{\frac{\Delta^\eta_{a,i}(x)\Delta_{c,i}(x)}{H_{a, i}(x;\theta)H_{c, i}(x; \beta)}I(U_i > x) -   \frac{\Delta^\eta_{a,i}(x)\Delta_{c,i}(x)}{H_{a, i}(x;\theta)H_{c, i}(x; \beta)}S_U(x;\eta)\right\}}{1/n\sum_{i=1}^{n}   \frac{\Delta^\eta_{a,i}(x)\Delta_{c,i}(x)}{H_{a, i}(x;\theta)H_{c, i}(x; \beta)}} \\
        &+ D_1^{\top}(x) n^{1/2}(\hat{\theta} - \theta) + D_2^{\top}(x) n^{1/2}(\hat{\beta} - \beta) + o_p(1) \\
        =& n^{-1/2} \sum_{i=1}^{n}  \frac{\Delta^\eta_{a,i}(x)\Delta_{c,i}(x)}{H_{a, i}(x;\theta)H_{c, i}(x; \beta)}\left\{I(U_i > x) - S_U(x;\eta)\right\}\\
        &+ D_1^{\top}(x) n^{1/2}(\hat{\theta} - \theta) + D_2^{\top}(x) n^{1/2}(\hat{\beta} - \beta) + o_p(1) \\
        =& n^{-1/2} \sum_{i=1}^{n}  \{IC_{x;\eta}(o_i) + D_1^{\top}(x) IC_{\theta}(o_i) + D_2^{\top}(x)IC_{\beta}(o_i)\}+ o_p(1)\\
        := & n^{-1/2} \sum_{i=1}^{n} IC_{PAR}(o_i; x, \eta) + o_p(1),
    \end{split}
\end{align}
where $D_1(x) = \lim_{n \rightarrow \infty} \partial \hat{S}_U(x;\eta)/\partial \theta$, $D_2(x) = \lim_{n \rightarrow \infty} \partial \hat{S}_U(x;\eta)/\partial \beta$, $IC_{x;\eta}(o)$ is the influence function under regime $g^{\eta}$ at time $x$ when all nuisnace parameters are known, and $IC_{\beta}(o)$ and $IC_{\theta}(o)$ are part of the parametric models defined in regularity conditions (C.3). $\hat{S}_U(x;\eta) - S_U(x;\eta)$ is thus an asymptotic linear estimator within the range $(0, L_U]$. The first term is composed of bounded probability functions and simple indicator functions, which belong to the Donsker class, and thus $n^{-1/2}\left\{\hat{S}_U(x;\eta) - S_U(x;\eta)\right\}$ converges to a mean zero Gaussian process by empirical process theory \cite{van2000asymptotic}. 

Consequently, a consistent covariance estimation is given by 
$1/n \sum_{i=1}^{n} \widehat{IC}_{PAR}(o_i; x, \eta)^2$ with estimated $\hat{\theta}$ and $\hat{\beta}$, where
\[\widehat{IC}_{PAR}(o_i; x, \eta) = \frac{ \left\{\frac{\Delta^\eta_{a,i}(x)\Delta_{c,i}(x)}{H_{a, i}(x;\theta)H_{c, i}(x; \beta)}I(U_i > x) - \hat{S}_U(x;\eta)\right\}}{1/n\sum_{i=1}^{n}   \frac{\Delta^\eta_{a,i}(x)\Delta_{c,i}(x)}{H_{a, i}(x;\theta)H_{c, i}(x; \beta)}} + \hat{D}_1^{\top}(x)\widehat{IC}_{\theta}(o_i) + \hat{D}_2^{\top}(x)\widehat{IC}_{\beta}(o_i).\]

\subsubsection*{Step 2: The asymptotic distribution of $\hat{S}_U(x; \hat{\eta}_{opt})$}
Let $\hat{\eta}_{opt}$ be the maximizer of $\hat{S}_U(x;\eta)$ and $\eta_{opt}$ be the maximizer of $S_U(x;\eta)$. We assume $S_U(x;\eta)$ is a smooth function of $\eta$. Then by the consistency of $\hat{S}_U(x;\eta)$, $\hat\eta_{opt}$ will converge in probability to $\eta_{opt}$. Let $W_n(\eta) = n^{1/2}\left\{\hat{S}_U(x;\eta) - S_U(x;\eta)\right\} $. Following the argument in \citet{zhang2012robust} and using a stochastic equicontinuity argument, we have  $W_n(\hat{\eta}_{opt}) - W_n(\eta_{opt})
\rightarrow 0$ in probability. We can represent
\begin{align} \tag{A.3}
    \begin{split}
        & n^{1/2}\{\hat{S}_U(x; \hat{\eta}_{opt}) - S_U(x; \eta_{opt})\} \\
        =& W_n(\hat{\eta}_{opt}) + n^{1/2}\{S_U(x; \hat{\eta}_{opt}) - S_U(x; \eta_{opt}) \} \\
        =& W_n(\hat{\eta}_{opt}) - W_n(\eta_{opt}) +  W_n(\eta_{opt})  + n^{1/2}\{S_U(x; \hat{\eta}_{opt}) - S_U(x; \eta_{opt}) \} \\
        =& W_n(\hat{\eta}_{opt}) - W_n(\eta_{opt}) +  W_n(\eta_{opt})  + n^{1/2}(\hat{\eta}_{opt} - \eta_{opt} )\frac{\partial}{\partial \eta}S_U(x; \eta)\mid_{\eta_{opt}} + o_p(1)\\
        =& W_n(\eta_{opt}) + o_p(1),
    \end{split}
\end{align}
The last equality follows because $W_n(\hat{\eta}_{opt}) = o_p(1)$ and $\frac{\partial}{\partial \eta}S_U(x; \eta)\mid_{\eta_{opt}}=0$ ($\eta_{opt}$ is the maximizer of the function). Thus, $n^{1/2}\left\{\hat{S}_U(x; \hat{\eta}_{opt}) - S_U(x; \eta_{opt})\right\}$ converges to the same Gaussian process as $n^{1/2}\left\{\hat{S}_U(x; \eta_{opt}) - S_U(x; \eta_{opt})\right\}$. That is $\hat{S}_U(x; \hat{\eta}_{opt})$ converges to a Gaussian process with mean $S_U(x; \eta_{opt})$ and variance $E[IC_{PAR}(o;x, \eta_{opt})^2]$ which can be empirically estimated as
\begin{equation} \tag{A.4}
    1/n\sum_{i=1}^{n} \widehat{IC}_{PAR}(o_i;x, \hat{\eta}_{opt})^2.
\end{equation}

\subsubsection*{Step 3: The asymptotic equivalence between $\hat{S}_U(x; \hat{\eta}_{opt})$ and $\tilde{S}_U(x; \tilde{\eta}_{opt})$}
Define $Q = E\left\{\Delta^{\eta}_{a,i}(x)\Delta_{c,i}(x)/H_{a, i}(x)H_{c, i}(x)\right\}$ and $\tilde Q = E\left\{\tilde \Delta_{a, i}^\eta(x)\Delta_{c,i}(x)/H_{a, i}(x)H_{c, i}(x)\right\}$. $K$ is actually 1. Let's first consider the case with one stage.

\subsubsection*{K = 1}

Let's look at the trivial case with stage K = 1. We first show that $n^{1/2}(\tilde Q  - Q) = o_p(1)$. Notice that 
\begin{align} \label{eq:s1k1} \tag{A.5}
    \begin{split}
        & n^{-1/2}\sum_{i=1}^{n}\frac{ \{\Delta^{\eta}_{a,i}(x) - \tilde \Delta_{a, i}^\eta(x)\}\Delta_{c,i}(x)}{\hat{H}_{a, i}(x)\hat{H}_{c, i}(x)} \\
        =&n^{-1/2} \sum_{i=1}^{n}\frac{ \{\Delta^{\eta}_{a,i}(x) - \tilde \Delta_{a, i}^\eta(x)\}\Delta_{c,i}(x)}{H_{a, i}(x)H_{c, i}(x)} + o_p(1)
    \end{split}
\end{align}
The definition of $\Delta^{\eta}_{a}(x)$ and $\tilde \Delta_{a}^\eta(x)$ tell us that when K= 1, they are the indicator and normal distribution function themselves. Define $L = ( A_0, T, \delta)$, $\Xi(Z_0,L) = \Delta_{c}(x) / H_{a}(x)H_{c}(x)$ and $r^{\eta} = \eta^{\top} Z_0$. Following the arguments in \citet{heller2007smoothed}, we have
 \begin{align} \label{eq:s1k2} \tag{A.6}
     \begin{split}
         |\eqref{eq:s1k1}| \le M_1 n^{1/2} \sup_{|\eta| = 1}\left|\int_{l} \int_{r^{\eta}} \{I(r^{\eta} \ge 0) - \Phi(r^{\eta} \ge 0)\}
         {\Xi}(l) d\hat{F}(r^{\eta}|l) d\hat{G}(l)\right|
     \end{split}
 \end{align}
 where $M_1$ is a finite constant, $\hat{G}(l)$ and $\hat{F}(r^{\eta}|l)$ are the marginal empirical cumulative distribution functions for $l$ and the conditional empirical cumulative distribution function for $r^{\eta}$, respectively. For simplicity, we omit the superscript $\eta$ in $r^{\eta}$. \eqref{eq:s1k1} is thus bounded by $M_1 n^{1/2} \sup_{\|\eta\| = 1} U$. Write $U$ as $U_1 + U_2$ where 
 \begin{align*} \tag{A.7}
     \begin{split}
         U_1 =& \int_{l} \int_{r^{\eta}} \{I(r^{\eta} \ge 0) - \Phi(r^{\eta} \ge 0)\}
         {\Xi}(l) \{d\hat{F}(r^{\eta}|q) - dF(r^{\eta}|q)\} d\hat{G}(q),\\
         U_2 =& \int_{l} \int_{r^{\eta}} \{I(r^{\eta} \ge 0) - \Phi(r^{\eta} \ge 0)\}
         {\Xi}(l) dF(r^{\eta}|l) d\hat{G}(l),
     \end{split}
 \end{align*}
 with $F(r^{\eta}|l) = \lim_{n \rightarrow \infty} \hat{F}(r^{\eta}|l)$. By variable transformation $z = r^{\eta}/\nu$ and integration by parts, we have 
 \begin{equation} \label{eq:s1k3} \tag{A.8}
     U_1 = \int_{l} \int_{z} {\Xi}(l) \phi(z) \left\{\left[\hat{F}(z\nu|l) - F(z\nu|l)\right] - \left[\hat{F}(0|l) - F(0|l)\right] \right\}dz d\hat{G}(q),
 \end{equation}
 where $\phi(z)$ is the probability density function of standard normal distribution. Under regularity condition (C.4), we apply the results on oscillations of empirical process (Shorack and Wellner, 2009, Theorem 1, p. 542) to equation \eqref{eq:s1k3} and have
 \begin{equation*}
     n^{1/2} U_1 = O_p\left( \sqrt{\nu\log n \log\left(\frac{1}{\nu \log n}\right)}\right).
 \end{equation*}
 In addition, by similar arguments and applying second order Taylor expansion of $U_2$ with respect to $\nu$ around 0, we have
 \begin{equation*}
     U_2 = -\nu^2/2 \int_{q} \int_{z} {\Xi}(l) \phi(z) f^{\prime}(z\nu^{\ast}|l) z^2dz d\hat{G}(l),
 \end{equation*}
where $f^{\prime}(z\nu^{\ast}|l) = \partial^2 F(u|l)/ \partial u^2$ and $\nu^{\ast}$ lies between $\nu$ and 0. We thus have $n^{1/2} U_2 = O_p(n^{1/2}\nu^2)$. Combine the above results, we have 
\begin{equation*}
      M_1 n^{1/2} U \le M_1 n^{1/2} (|U_1| + |U_2|) = O_p\left( \sqrt{\nu\log n \log\left(\frac{1}{ \nu \log n}\right)} + n^{1/2}\nu^2\right).
\end{equation*}
By condition (C.4)-(C.5), we have $\sup_{|\eta| = 1}M_1 n^{1/2} U  = o_p(1)$, which means $n^{1/2}(\tilde Q - Q) = o_p(1)$.

With this result, let's look at the next step:
\begin{align} \label{eq:s1k4} \tag{A.9}
    \begin{split}
        & n^{1/2}\left\{\hat{S}_U(x;\eta) - \tilde{S}_U(x;\eta)\right\}\\
        =& n^{-1/2} \frac{\sum_{i=1}^{n}\frac{ \Delta^{\eta}_{a,i}(x)\Delta_{c,i}(x)}{\hat{H}_{a, i}(x)\hat{H}_{c, i}(x)}I(U_i > x)}{1/n\sum_{i=1}^{n}\frac{ \Delta^{\eta}_{a,i}(x)\Delta_{c,i}(x)}{\hat{H}_{a, i}(x)\hat{H}_{c, i}(x)}} - \frac{\sum_{i=1}^{n}\frac{ \tilde \Delta_{a, i}^\eta(x)\Delta_{c,i}(x)}{\hat{H}_{a, i}(x)\hat{H}_{c, i}(x)}I(U_i > x)}{1/n\sum_{i=1}^{n}\frac{ \tilde \Delta_{a, i}^\eta(x)\Delta_{c,i}(x)}{\hat{H}_{a, i}(x)\hat{H}_{c, i}(x)}}\\
         =& \frac{1}{n^{1/2}\tilde Q Q}\left[\tilde Q \sum_{i=1}^{n}\frac{ \{\Delta^{\eta}_{a,i}(x) - \tilde \Delta_{a, i}^\eta(x)\}\Delta_{c,i}(x)I(U_i > x)}{H_{a, i}(x)H_{c, i}(x)}  \right]\\
        &- \frac{1}{\tilde Q Q}\left[(\tilde Q-Q) n^{-1/2}\sum_{i=1}^{n}\frac{\tilde \Delta_{a, i}^\eta(x)\Delta_{c,i}(x)I(U_i > x)}{H_{a, i}(x)H_{c, i}(x)}  \right] + o_p(1). 
    \end{split}
\end{align}
The second term of \eqref{eq:s1k4} is $o_p(1)$ because $n^{1/2}(Q - \tilde Q) = o_p(1)$ and condition (C.4)-(C.5). As for the first term, we can show it is $o_p(1)$ by using similar arguments proving $n^{1/2}(Q - \tilde Q) = o_p(1)$, and we omit it here. Therefore, we have $n^{1/2}\left\{\hat{S}_U(x; \hat{\eta}_{opt}) - \tilde{S}_U(x; \tilde{\eta}_{opt}) \right\} = o_p(1)$ uniformly for $\eta$. 

\subsubsection*{K = 2}
When $K = 2$, $\Delta^{\eta}_{a}(x)$ and $\tilde \Delta_{a}^\eta(x)$ can be either the indicator (distribution) function or a product of two indicator (distribution) functions depend on the map of QAL. Let's also start from the equivalence between $R$ and $Q$. Suppose at target time $x$, there are two group of patients who are at stage 1 or stage 2
\begin{align} \label{eq:s2k1} \tag{A.10}
    \begin{split}
        & n^{-1/2}\sum_{i=1}^{n}\frac{ \{\Delta^{\eta}_{a,i}(x) - \tilde \Delta_{a, i}^\eta(x)\}\Delta_{c,i}(x)}{\hat{H}_{a, i}(x)\hat{H}_{c, i}(x)} \\
        =&n^{-1/2} \sum_{i \in Stage 1}\frac{ \{\Delta^{\eta}_{a,i}(x) - \tilde \Delta_{a, i}^\eta(x)\}\Delta_{c,i}(x)}{H_{a, i}(x)H_{c, i}(x)} + \sum_{i \in Stage 2}\frac{ \{\Delta^{\eta}_{a,i}(x) - \tilde \Delta_{a, i}^\eta(x)\}\Delta_{c,i}(x)}{H_{a, i}(x)H_{c, i}(x)} + o_p(1)\\
        =& n^{-1/2} \sum_{i=1}^{n}\frac{ \{\Delta^{\eta}_{a,i}(x) - \tilde \Delta_{a, i}^\eta(x)\}\Delta_{c,i}(x)I\{L_i(x) = 1\}}{H_{a, i}(x)H_{c, i}(x)} \\
&+ \sum_{i=1}^{n}\frac{ \{\Delta^{\eta}_{a,i}(x) - \tilde \Delta_{a, i}^\eta(x)\}\Delta_{c,i}(x) I\{L_i(x) = 2\}}{H_{a, i}(x)H_{c, i}(x)} + o_p(1) \\
        := & \mathcal{K}_1 + \mathcal{K}_2 + o_p(1),
    \end{split}
\end{align}
where $L_i(x)$ is the number of stages the $i$th patient stays at target time $x$.  
Notice that $|\eqref{eq:s2k1}| \le = |\mathcal K_1| + |\mathcal K_2|$.
We can use similar arguments in the case $K = 1$ to bound the term $|\mathcal K_1|$ and we omit the details here. Define $L_1 = ( A_0, T_0, \delta_0)$, $L_2 = (L_1,Z_0, A_1, T_1, \delta_1, S)$, $L = (L_1, L_2)$, $K_2(q) = \Delta_{c}(x)I\{K(x) = 2\} / H_{a}(x)H_{c}(x)$ and $r_1^{\eta} = \eta^{\top} Z_0$, $r_2^{\eta} = \eta^{\top} Z_1$.  Following the arguments in \citet{heller2007smoothed}, we have
 \begin{align} \label{eq:s2k2} \tag{A.11}
     \begin{split}
        |\mathcal K_2| \le & M_2 n^{1/2} \\
        & \times  \sup_{|\eta| = 1} \Bigg| \Bigg. \int_{l_1} \int_{r_1^{\eta}} \int_{l_2} \int_{r_2^{\eta}} \{I(r_1^{\eta} \ge 0)I(r_2^{\eta} \ge 0) - \Phi(r_1^{\eta}/\nu)\Phi(r_2^{\eta}/\nu)\}K_2(q) \\
&\times d\hat{F}(r_2^{\eta}|l_2) d\hat{G}(l_2|l_1)d\hat{F}(r_1^{\eta}|l_1) d\hat{G}(l_1) \Bigg. \Bigg|
     \end{split}
 \end{align}
 where $M_2$ is a finite constant, $\hat{G}(l_1)$ and $\hat{F}(r_1^{\eta}|l_1)$ are the marginal empirical cumulative distribution functions for $l_1$ and the conditional empirical cumulative distribution function for $r_1^{\eta}$, respectively at Stage 1 for subjects who belong to Stage 2, while $\hat{F}(r_2^{\eta}|l_2)$ and $\hat{G}(l_2|l_1)$ are the conditional empirical cumulative distribution function for $r_2^{\eta}$ and $l_2$ at Stage 2 for subjects who stay at stage 2. 
 
 Write the absolute integral of the right hand side of \eqref{eq:s2k2} as $U_1 + U_2$ where 
 \begin{align*}
     \begin{split}
        U_1 =& \int_{q_1} \int_{r_1^{\eta}} \int_{l_2} \int_{r_2^{\eta}} \{I(r_1^{\eta} \ge 0)I(r_2^{\eta} \ge 0) - I(r_1^{\eta} \ge 0)\Phi(r_2^{\eta}/\nu)\}
         K_2(l) \\
& \times   d\hat{F}(r_2^{\eta}|l_2) d\hat{G}(l_2|l_1)d\hat{F}(r_1^{\eta}|l_1) d\hat{G}(l_1)\\
         =& \int_{l_1} \int_{r_1^{\eta}} I(r_1^{\eta} \ge 0) \int_{l_2} \int_{r_2^{\eta}} \{I(r_2^{\eta} \ge 0) - \Phi(r_2^{\eta}/\nu)\} \\
& \times
         K_2(q) d\hat{F}(r_2^{\eta}|l_2) d\hat{G}(l_2|l_1) d\hat{F}(r_1^{\eta}|l_1) d\hat{G}(l_1)\\
         =& \int_{l_1} \int_{r_1^{\eta}} I(r_1^{\eta} \ge 0) U_3 d\hat{F}(r_1^{\eta}|l_1) d\hat{G}(l_1),\\
         U_2 =& \int_{l_1} \int_{r_1^{\eta}} \int_{l_2} \int_{r_2^{\eta}} \{I(r_1^{\eta} \ge 0)\Phi(r_2^{\eta}/\nu) - \Phi(r_1^{\eta}/h)\Phi(r_2^{\eta}/\nu)\}\\
& \times
         K_2(q) d\hat{F}(r_2^{\eta}|l_2) d\hat{G}(l_2|l_1)d\hat{F}(r_1^{\eta}|l_1) d\hat{G}(l_1)\\
         =& \int_{l_1} \int_{r_1^{\eta}} \{I(r_1^{\eta} \ge 0) - \Phi(r_1^{\eta}/\nu)\} \int_{l_2} \int_{r_2^{\eta}} \Phi(r_2^{\eta}/\nu) K_2(l) \\
& \times d\hat{F}(r_2^{\eta}|l_2) d\hat{G}(l_2|l_1) d\hat{F}(r_1^{\eta}|q_1) d\hat{G}(l_1).
     \end{split}
 \end{align*}
where $U_3 = \int_{l_2} \int_{r_2^{\eta}} \{I(r_2^{\eta} \ge 0) - \Phi(r_2^{\eta}/\nu)\} 
         K_2(q) d\hat{F}(r_2^{\eta}|l_2) d\hat{G}(l_2|l_1)$. Using the arguments made in the case $K = 1$, Conditions (C.4)-(C.5), and the boundedness of the term
 $\int_{l_2} \int_{r_2^{\eta}} \Phi(r_2^{\eta}/\nu) K_2(l) d\hat{F}(r_2^{\eta}|l_2) d\hat{G}(l_2|l_1)$, we can show that $n^{1/2}U_2 = o_p(1)$ and $n^{1/2}U_3 = o_p(1)$. Because $r_1^{\eta}$ is bounded, $\int_{l_1} \int_{r_1^{\eta}} I(r_1^{\eta} \ge 0) U_3 d\hat{F}(r_1^{\eta}|l_1) d\hat{G}(l_1)$ is also $o_p(1)$. We thus have $n^{1/2}(R - Q) = o_p(1)$. 

We then do the expansion in equation \eqref{eq:s1k4}. The second term is obviously $o_p(1)$ due to $n^{1/2}(R - Q) = o_p(1)$. The first term can also be shown to be $o_p(1)$ by repeating the above arguments when $K = 2$. With fixed finite number of stages $K$, we can do similar things to generalize this result.

Above arguments show that
$n^{1/2}\left\{\hat{S}_U(x;\eta) - \tilde{S}_U(x;\eta)\right\} = o_p(1)$ uniformly in $\eta$. Then by Taylor expansion, $n^{1/2}\left\{\tilde{S}_U(x; \eta_{opt}) - \tilde{S}_U(x; \tilde{\eta}_{opt})\right\} = o_p(1)$ and \\
$n^{1/2}\left\{\hat{S}_U(x; \eta_{opt}) - \hat{S}_U(x; \hat{\eta}_{opt})\right\} = o_p(1)$ and thus $n^{1/2}\left\{\hat{S}_U(x;\hat{\eta}_{opt}) - \tilde{S}_U(x; \tilde{\eta}_{opt})\right\} = \\ n^{1/2}\left\{\hat{S}_U(x; \hat{\eta}_{opt}) - \hat{S}_U(x; \eta_{opt}) + \hat{S}_U(x; \eta_{opt}) - \tilde{S}_U(x; \eta_{opt}) + \tilde{S}_U(x; \eta_{opt})  - \tilde{S}_U(x; \tilde{\eta}_{opt})\right\} = o_p(1)$ and the proof of property 4 in Theorem~\ref{ch3-theorem1} completes.

\end{proof}

\subsection{Proof of Theorem~\ref{ch3-theorem2}}
\begin{proof}

In this proof, we omit the superscript '$^{par}$' for simplicity, with the understanding that we are focusing on the parametric modeling of the nuisance functions.

As shown in Theorem~\ref{ch3-theorem1}, $\hat{S}_U(x; \hat{\eta}_{opt})$ is consistent to $S_U(x; \eta_{opt})$ and 
\begin{equation*}
    n^{1/2} \left\{\hat{S}_U(x; \hat{\eta}_{opt}) - S_U(x; \eta_{opt})\right\} = n^{-1/2} \sum_{i=1}^{n} IC_{PAR}(o_i; x, \eta_{opt}) + o_p(1),
\end{equation*}
where $IC_{PAR}(o; x, \eta_{opt})$ is the influence function when target time is $x$. For the pre-defined restricted upper limit $L_U$, the RQAL $R(\eta)$ up to $L_U$ is a continuous mapping of $S(x; \eta)$. By applying continuous mapping theorem, $\hat{R}(\hat{\eta}_{opt}, \hat{\theta}, \hat{\beta})$ is consistent to $R(\eta_{opt})$ when the model for treatment assignment and censoring are correctly specified.

We then show the asymptotic normality. We already have the asymptotic lineality expression of \eqref{eq:asy prob}. With delta method, it is easy to show that 
\begin{align*}
\begin{split}
     n^{1/2}\left\{\hat{R}(\hat{\eta}_{opt},\hat{\theta}, \hat{\beta}) - R(\eta_{opt})\right\} =& n^{-1/2} \sum_{i=1}^{n} \int_{0}^{L_U} IC_{PAR}(o_i; x, \eta_{opt}) dx + o_p(1) \\
     :=&  n^{-1/2} \sum_{i=1}^{n} IC_{2, PAR}(o_i; x, \eta_{opt}) + o_p(1),
\end{split}
\end{align*}
and $IC_{2, PAR}(o; x, \eta_{opt})$ is the influence function for RQAL at time $x$ under regime $g^{\eta}$.

Thus, $\hat{R}(\hat{\eta}_{opt},\hat{\theta}, \hat{\beta})$ is asymptotic normal with mean $R(\eta_{opt})$ and variance $E[IC_{2, PAR}(o; x, \eta_{opt})^2]$ which can be empirically estimated as
\begin{equation} \tag{A.12}
    1/n\sum_{i=1}^{n} \hat{IC}_{2, PAR}(o_i; x, \hat{\eta}_{opt})^2 = 1/n\sum_{i=1}^{n} \left\{\int_{0}^{L_U} \hat{IC}_{PAR}(o_i; x, \hat{\eta}_{opt})\right\}^2.
\end{equation}
The asymptotic equivalence between $\hat{S}_U(x; \hat{\eta}_{opt})$ and $\tilde{S}_U(x; \tilde{\eta}_{opt})$ can be shown using the arguments for Theorem~\ref{ch3-theorem1}. 
\end{proof}

\subsection{Proof of Theorem~\ref{ch3-theorem3}}
\begin{proof}
In this proof, we omit the superscript '$^{npar}$' for simplicity, with the understanding that we are focusing on the nonparametric modeling of the nuisance functions.

We use the following Lemma to prove the theorem.  Recall
\begin{align*}
    h_{a,j}(g_j^\eta\mid\bar{z}) &= (1-A_{j-1})p(A_j = g_j^\eta \mid A_{j-1} = 0, C_{j-1} = 0, Y_j = 1, \bar{Z}_j = \bar{z})+A_{j-1}\\
    h_{c,j}(c\mid\bar{a}, \bar{z}) &= (1-C_{j-1})p(C_j = c \mid \bar{A}_{j} = \bar{a}, C_{j-1} = 0, Y_j = 1, \bar{Z}_j = \bar{z})+C_{j-1}.
\end{align*}
\begin{lemma}[Theoretical undersmoothing conditions]\label{lem:ucondition}
Let $\hat h_{a,j,\lambda_n^a}$ and $\hat h_{c,j,\lambda_n^c}$ denote highly adaptive lasso estimators of $h_{a,j}$ and $h_{c,j}$ with $L_1$-norm bounds $\lambda_n^a$ and $\lambda_n^c$ chosen such that, for $j=1,2,\cdots,l(x)$,
\begin{align}
    \min_{k \in \mathcal{J}_n^a} {\bigg \Vert} P_n \frac{d}{d\logit
    \hat h_{a,j,\lambda_n^a}}
    L(\logit \hat h_{a,j,\lambda_n^a}) (\phi_{k}) {\bigg \Vert}&= o_p\left(n^{-1/2}\right),  \label{eq:basisa} \\
    \min_{k \in \mathcal{J}_n^c} {\bigg \Vert} P_n \frac{d}{d\logit
    \hat h_{c,j,\lambda_n^c}}
    L(\logit \hat h_{c,j,\lambda_n^c}) (\phi_{k}) {\bigg \Vert} &= o_p\left(n^{-1/2}\right), \label{eq:basisc} 
\end{align}
where  $L(\cdot)$  is log-likelihood loss and $\phi_{k}$ denote the $k$th basis function (i.e., feature) included in the model. Also,   $\mathcal{J}_n^.$ is a set of indices
for the basis functions with nonzero coefficients in the corresponding model. Let  $D^a(f^a,\hat h_{a,j})
= f^a \cdot (A_j - \hat h_{a,j})$, and $D^c(f^c,\hat h_{c,j})
= f^c \cdot (C_j - \hat h_{c,j})$. The functions $f^a$ and $f^c$ are c\`{a}dl\`{a}g with finite sectional
variation norm. Let  $\tilde{f}_\phi^a$ and $\tilde{f}_\phi^c$ be projections of  $\tilde f^a$  and $ \tilde f^c$ onto the linear
span of the basis functions $\phi_{j}$ in $L^2(P)$, where $\phi_{s,j}$
satisfies condition (\ref{eq:basisa}) and (\ref{eq:basisc}), respectively. 
Then,  we have   $P_n D^a({f}^a,\hat h_{a,j}) = o_p\left(n^{-1/2}\right)$ and $P_n D^c({f}^c,\hat h_{c,j}) = o_p\left(n^{-1/2}\right)$.
\end{lemma}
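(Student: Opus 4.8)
The plan is to prove the two conclusions by the undersmoothing argument of \cite{ertefaie2023nonparametric}, adapted to the longitudinal coarsening structure; since the arguments for $D^a$ and $D^c$ are symmetric, I describe the treatment part only and then indicate the changes for censoring. First I would identify the empirical score: writing $\logit \hat h_{a,j}$ as a linear combination of the indicator basis functions and differentiating the log-likelihood loss $L$ along the direction $\phi_k$ gives, up to the at-risk and no-prior-censoring weights embedded in the definition of $h_{a,j}$, the directional derivative $P_n\{\phi_k(A_j-\hat h_{a,j})\}$. Thus condition \eqref{eq:basisa} reads $\min_{k\in\mathcal{J}_n^a}|P_n\{\phi_k(A_j-\hat h_{a,j})\}|=o_p(n^{-1/2})$; because $\hat h_{a,j}$ solves an $L_1$-constrained minimization, the Karush--Kuhn--Tucker conditions force the empirical score to have a common magnitude over all active coordinates, so in fact $\max_{k\in\mathcal{J}_n^a}|P_n\{\phi_k(A_j-\hat h_{a,j})\}|=o_p(n^{-1/2})$.

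Next I would split $f^a$ into its $L^2(P)$-projection $f_\phi^a=\sum_{k\in\mathcal{J}_n^a}c_k\phi_k$ onto the span of the active basis functions and the residual $r^a=f^a-f_\phi^a$, writing $P_nD^a(f^a,\hat h_{a,j})=P_nD^a(f_\phi^a,\hat h_{a,j})+P_nD^a(r^a,\hat h_{a,j})$. For the projected part, Assumption~\ref{assump:cadlag} together with conditions (C.1)--(C.2) make $f^a$ c\`adl\`ag with finite sectional variation norm, so the approximating coefficients satisfy $\sum_{k\in\mathcal{J}_n^a}|c_k|<\infty$, and therefore
\[
\bigl|P_nD^a(f_\phi^a,\hat h_{a,j})\bigr|\le\Bigl(\textstyle\sum_{k\in\mathcal{J}_n^a}|c_k|\Bigr)\max_{k\in\mathcal{J}_n^a}\bigl|P_n\{\phi_k(A_j-\hat h_{a,j})\}\bigr|=o_p(n^{-1/2}).
\]
For the residual part I would write $P_nD^a(r^a,\hat h_{a,j})=(P_n-P)\{r^a(A_j-\hat h_{a,j})\}+P\{r^a(A_j-\hat h_{a,j})\}$. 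The empirical-process term is controlled by asymptotic equicontinuity: both $r^a$ and $\hat h_{a,j}$ lie in a class of c\`adl\`ag functions with uniformly bounded sectional variation norm, which is $P$-Donsker, and $\|r^a(A_j-\hat h_{a,j})\|_{L^2(P)}\to_p 0$ since $\|r^a\|_{2,\mu}=O_p(n^{-1/4})$ by Assumption~\ref{assump:proj}; hence this term is $o_p(n^{-1/2})$. The bias term I would decompose as $P\{r^a(A_j-h_{a,j})\}+P\{r^a(h_{a,j}-\hat h_{a,j})\}$: the first summand is zero because $r^a$ is, on the event where the indicators in $f^a$ are nonzero, a function of the history through which $E\{A_j-h_{a,j}\mid\bar A_{j-1},\bar Z_j,Y_j=1,C_{j-1}=0\}=0$; the second is bounded by Cauchy--Schwarz by $\|r^a\|_{2,\mu}\,\|h_{a,j}-\hat h_{a,j}\|_{L^2(P)}=O_p(n^{-1/4})\cdot o_p(n^{-1/4})=o_p(n^{-1/2})$, where the $o_p(n^{-1/4})$ rate for the undersmoothed highly adaptive lasso fit is guaranteed by the choice of $L_1$-bound discussed in Section~\ref{sec:undercri} (the cross-validated $n^{-1/3}$ rate, up to a $\log n$ factor, degraded only by the admissible amount of undersmoothing). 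Collecting the three bounds gives $P_nD^a(f^a,\hat h_{a,j})=o_p(n^{-1/2})$, and repeating the argument verbatim with $(C_j,h_{c,j},f^c,\mathcal{J}_n^c)$, the conditioning set augmented by $\bar A_j$, and condition \eqref{eq:basisc} in place of \eqref{eq:basisa}, gives $P_nD^c(f^c,\hat h_{c,j})=o_p(n^{-1/2})$.

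I expect the main obstacle to be the empirical-process step $(P_n-P)\{r^a(A_j-\hat h_{a,j})\}=o_p(n^{-1/2})$, because $\hat h_{a,j}$ is data-adaptive and $r^a$ is itself a random c\`adl\`ag function: one must verify that the relevant class has sectional variation norm bounded uniformly in $n$ (so it is $P$-Donsker) and then invoke asymptotic equicontinuity at the shrinking $L^2$ radius $O_p(n^{-1/4})$. This is precisely where conditions (C.1)--(C.2), the finite-sectional-variation-norm assumption, and the $L_1$-norm bound on the highly adaptive lasso all enter, and it is essentially the only place the proof is more than bookkeeping. A secondary technical point is tracking the at-risk and no-prior-censoring indicators embedded in the definitions of $h_{a,j}$ and $h_{c,j}$, so that the conditional mean-zero identity that eliminates the leading bias term is applied on exactly the event on which these hazards are defined.
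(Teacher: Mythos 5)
Your sketch is correct and follows essentially the same route as the paper, which in fact omits the proof entirely by deferring to Lemma~1 of \cite{ertefaie2023nonparametric}: the score identification via the log-likelihood directional derivative, the KKT equal-magnitude argument on the active set, the projection-plus-residual decomposition with the sectional-variation-norm bound on $\sum_k|c_k|$, the Donsker/equicontinuity control of the empirical-process term, and the conditional-mean-zero cancellation plus Cauchy--Schwarz for the bias term are precisely the ingredients of that cited argument, adapted as you describe to the longitudinal coarsening structure. Your sketch is, if anything, more explicit than what the paper provides.
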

\begin{proof}
    The proof is similar to the proof of Lemma 1 in \cite{ertefaie2023nonparametric}, and thus is omitted. 
\end{proof}

We now return to the proof of Theorem~\ref{ch3-theorem3}. We first show the consistency of the direct inverse probability weighting estimator with highly adaptive lasso. Let $\breve \Psi_{H_{a,c}}(x;\eta)=\frac{\Delta^{\eta}_{a}(x)\Delta_{c}(x)}{H_{a,c}(x)}\{I(U > x) - S_U(x; \eta)\}$ and  $\Psi_{H_a,H_c}(x;\eta)= \P_n \breve\Psi_{H_{a,c}}(x;\eta)$ where  $\P_n$ is the empirical average and $ H_{a,c,i}(x)={H}_{a, i}(x){H}_{c, i}(x)$.   For simplicity of notation, we will suppress $\eta$ in our notation such that $\breve\Psi_{H_{a,c}}(x;\eta) \equiv \breve\Psi_{H_{a,c}}(x)$.  We have
\begin{align} \label{eq:decompose hal1} \tag{A.13}
    \begin{split}
        \P_n \breve\Psi_{\hat H_{a,c}} - \mathbb{P} \breve\Psi_{H_{a,c}}
       = (\mathbb{P}_n - \mathbb{P}) \breve\Psi_{H_{a,c}} + \mathbb{P}(\breve\Psi_{\hat H_{a,c}} - \breve\Psi_{H_{a,c}}) + (\mathbb{P}_n - \mathbb{P})(\breve\Psi_{\hat H_{a,c}}  - \breve\Psi_{H_{a,c}}).
    \end{split}
\end{align}
Using empirical process theory and by consistency of the  highly adaptive lasso estimate $\breve\Psi_{\hat H_{a,c}}$, the third term will be $o_p(n^{-1/2})$ \citep{vaart1996weak, kosorok2008introduction}. The first term is already linear so it will contribute to the influence function. Hence, the asymptotic linearity fo our estimator relies on the linearity of the second term.  It can be written as 

\begin{align} \label{eq:asy hal1} \tag{A.14}
    \begin{split}
    \mathbb{P}_0(\breve\Psi_{\hat H_{a,c}} - \breve\Psi_{H_{a,c}}) 
    &= \mathbb{P}_0 \left\{ \frac{\Delta^\eta_{a}(x) \Delta_{c}(x) I(U > x)}{\hat{H}_{a,c}(x)} - \frac{\Delta^\eta_{a}(x) \Delta_{c}(x) I(U > x)}{H_{a,c}(x)} \right\} \\
    &= \mathbb{P}_0 \left( Q(x) \left[ - \frac{ \hat{H}_{a,c}(x) - H_{a,c}(x)}{H_{a,c}(x)} + \frac{ (\hat{H}_{a,c}(x) - H_{a,c}(x))^2}{\hat{H}_{a,c}(x)} \right] \right) \\
    &= \mathbb{P}_0 \sum_{j=0}^{l(x)} \Bigg[ \frac{Q_j(x) I(\bar{A}_{j-1} = \bar{g}_{j-1}^\eta) I(C_{j-1} = 0)}{ \prod_{s \le j} \left\{ 1 - h_{c,s}(1 \mid \bar{z}_s, \bar{a}_s) \right\} \prod_{s \le j} \left\{ h_{a,s}(g_j^\eta \mid \bar{z}_s, \bar{a}_s) \right\}}  \\
    & \hspace{2in} \quad  \left\{ I(A_j = g_j^\eta) I(C_j = 0) - \hat{h}_{a,c,j} \right\} \Bigg]  + o_p(n^{1/2}),
    \end{split}
\end{align}
where $Q_j(x)= E\{I(U  >x) | \bar{A}_{j}=\bar g_j^\eta,C_{j-1}=0,\bar{Z}_j \}$ and $h_{a,c,j}=h_{a,j}(g_j^\eta\mid\bar{z}) h_{c,j}(0\mid\bar{a}, \bar{z})$. 
Recall our definitions,
\begin{eqnarray*}
    h_{a,j}(g_j^\eta\mid\bar{z}_j) &=& (1-A_{j-1})P(A_j = g_j^\eta \mid A_{j-1} = 0, C_{j-1} = 0, Y_j = 1, \bar{Z}_j = \bar{z})+A_{j-1},\\
    h_{c,j}(c_j\mid\bar{a}_j, \bar{z}_j) &=& (1-C_{j-1})P(C_j = c_j \mid \bar{A}_{j} = \bar{a}_j, C_{j-1} = 0, Y_j = 1, \bar{Z}_j = \bar{z}_j)+C_{j-1}.
\end{eqnarray*}
The third equality in (\ref{eq:asy hal1}) follows from (i) the projection of the terms in the second equality onto the nuisance tangent space at each time point; and (ii) the rate of convergence of the highly adaptive lasso estimate (i.e., $\|\hat{ H}_{a,c}(x) - { H}_{a,c}(x) \|_2=o_p(n^{-1/4})$).   For simplicity of presentation, let 
\begin{eqnarray*}
\lefteqn{D_{CAR}(Q,h_{a,c}) = }\\
&&\sum_{j=0}^{l(x)} \left[ \frac{Q_j(x) I(\bar A_{j-1}=\bar g_{j-1}^\eta) I(C_{j-1}=0) }{\prod_{s \le j}\{1 - h_{c,s}(1\mid\bar{z}_s, \bar{a}_s)\}\prod_{s \le j}\{ h_{a,s}(g_j^\eta\mid\bar{z}_s, \bar{a}_s)\}} \left\{ I(A_j=g_j^\eta) I(C_j=0) - \hat h_{a,c,j}\right\}\right] 
\end{eqnarray*}
We first rewrite
\begin{eqnarray*}
 \lefteqn{I(A_j=g_j^\eta) I(C_j=0) - \hat h_{a,c,j} = }\\
 &&I(A_j=g_j^\eta)\{  I(C_j=0) - \hat h_{c,j}(0\mid\bar{a}, \bar{z})\} +\hat h_{c,j}(0\mid\bar{a}, \bar{z})\{ I(A_j=g_j^\eta)  - \hat h_{a,j}(g_j^\eta\mid\bar{z})\}. 
\end{eqnarray*}
Then define
\begin{align*}
    f^a_j&= \frac{Q_j(x) I(\bar A_{j-1}=\bar g_{j-1}^\eta) I(C_{j-1}=0) \hat h_{c,j}(0\mid\bar{z}_s, \bar{a}_s) }{\Pi_{s \le j}\{1 - h_{c,s}(1\mid\bar{z}_s, \bar{a}_s)\}\Pi_{s \le j}\{ h_{a,s}(g_j^\eta\mid\bar{z}_s, \bar{a}_s)\}}\\
    f^c_j&= \frac{Q_j(x) I(\bar A_{j-1}=\bar g_{j-1}^\eta) I(C_{j-1}=0) I(A_j=g_j^\eta) }{\Pi_{s \le j}\{1 - h_{c,s}(1\mid\bar{z}_s, \bar{a}_s)\}\Pi_{s \le j}\{ h_{a,s}(g_j^\eta\mid\bar{z}_s, \bar{a}_s)\}}.
\end{align*}
Accordingly, 
\begin{align*}
    \mathbb{P}_0(\breve\Psi_{\hat H_{a,c}} - \breve\Psi_{H_{a,c}}) = -(\P_n-\P_0) D_{CAR}(Q,h_{a,c})&+\P_n \sum_{j=0}^{l(x)} f^a_j \left\{I(A_j=g_j^\eta)  - \hat h_{a,j}(g_j^\eta\mid\bar{z})\right\} \\
        &+\P_n \sum_{j=0}^{l(x)} f^c_j \left\{I(C_j=0)  - \hat h_{c,j}(0\mid \bar{a}_j,\bar{z}_j)\right\}+o_p(n^{1/2}).
\end{align*}

 Gathering all the terms together, we have
\begin{align} \label{eq:decompose hal1} \tag{A.15}
    \begin{split}
        \P_n \breve\Psi_{\hat H_{a,c}} - \mathbb{P} \breve\Psi_{H_{a,c}}
       = (\mathbb{P}_n - \mathbb{P}) \breve\Psi_{H_{a,c}}  -&(\P_n-\P_0) D_{CAR}(Q,h_{a,c})+\P_n \sum_{j=0}^{l(x)} f^a_j \left\{I(A_j=g_j^\eta)  - \hat h_{a,j}(g_j^\eta\mid\bar{z})\right\} \\
        &+\P_n \sum_{j=0}^{l(x)} f^c_j \left\{I(C_j=0)  - \hat h_{c,j}(0\mid \bar{a}_j,\bar{z}_j)\right\}+o_p(n^{1/2}).
    \end{split}
\end{align}
Hence, if we undersmooth $\hat h_{c,j}(1\mid \bar{a}_j,\bar{z}_j)$ and $\hat h_{a,j}(1\mid\bar{z}_j)$, for each $j=1,2,\cdots,l(x)$, such that $\left| \P_n \sum_{j=0}^{l(x)} f^a_j \left\{I(A_j=g_j^\eta)  - \hat h_{a,j}(g_j^\eta\mid\bar{z})\right\} \right|=o_p(n^{-1/2})$ and 
$\left| \P_n \sum_{j=0}^{l(x)} f^c_j \left\{I(C_j=0)  - \hat h_{c,j}(0\mid \bar{a}_j,\bar{z}_j)\right\} \right|=o_p(n^{-1/2})$, the estimator $\P_n \breve\Psi_{\hat H_{a,c}}$ will be asymptotically linear. Subsequently, we have 
\begin{align}    
    \hat{S}_U(x;\eta) - S_U(x;\eta) = & (\mathbb{P}_n - \mathbb{P}) \breve\Psi_{H_{a,c}}  -(\P_n-P_0) D_{CAR}(Q,h_{a,c}) + o_p(n^{1/2}).
\end{align}
Denote $IC_{HAL}(O;x,\eta) = (\mathbb{P}_n - \mathbb{P}) \breve\Psi_{H_{a,c}}  -(\P_n-P_0) \left[ Q(x) \frac{\{\Delta^\eta_{a}(x) \Delta_{c}(x) - { H}_{a,c}(x)\}}{{ H}_{a,c}(x)}\right]$. 
Then, the RQAL with highly adaptive lasso will be asymptotic linear with
\begin{align} \tag{A.15}
\begin{split}
     n^{1/2}\left\{\hat{R}(\hat{\eta}_{opt}) - R(\eta_{opt})\right\} =& n^{-1/2} \sum_{i=1}^{n} \int_{0}^{L_U} IC_{HAL}(o_i; x, \eta_{opt}) dx + o_p(1).
\end{split}
\end{align}

Similar arguments in the proof of Theorem~\ref{ch3-theorem1} can be used to show the asymptotic equivalence between $\hat{S}_U(x; \hat{\eta}_{opt})$ and $\tilde{S}_U(x; \tilde{\eta}_{opt})$.

\end{proof}

\subsection{The Highly Adaptive Lasso} \label{app:hal}

The basic idea is to model an outcome on a $d_j$-dimensional covariate vector $X$ by mapping $X$ into a group of indicator basis functions.  Let $s \subset (1, ..., d)$, $\mathcal{T} = \{s: s \subset (1, ..., d)\}$, and denote $X_i^*(s) = \{X_{il}: l\in s \}, i = 1,...,n$. For a typical observation $x$, we generate the basis expansion as $\Phi_n(x) = \{\phi_1(x), ..., \phi_n(x)\}^{\top}$, where $\phi_i(x) = \{\phi_{s,i}(x): s \in \mathcal{T}\}$ and $\phi_{s,i}(x) = \{x \rightarrow I(x^*(s) \ge X_i^*(s))\}$. For example when $X = (X_1, X_2)$, $\phi_i(x) = \{\phi_{1,i}(x), \phi_{2,i}(x), \phi_{12,i}(x)\} = \{I(x_1 \ge X_{i1}), I(x_2 \ge X_{i2}), I(x_1 \ge X_{i1}, x_2 \ge X_{i2})\}, i = 1,...,n,$ is the collection of the first- and the second-order indicator basis functions.

We now discuss the highly adaptive lasso fit of the treatment assignment model.  Let $h_{a,j} \in \mathcal{D}_j[0,\tau]$, $j = 1, ..., K$, where $\mathcal{D}_j[0,\tau]$ be the Banach space of $d_j$-variate real-valued c\`{a}dl\`{a}g
 (right-continuous with left-hand limits) functions on a cube $[0,\tau] \in \R^{d_j}$, where $\tau$ is the upper bound of all supports and is assumed to be finite. For a given subset  $s \subset \{0,1,\ldots,d_j\}$, define $u_s =(u_l :l \in s)$ and $u_{-s}$ as the complement of $u_s$. Then, $h_{a,j,s}: [0_s,\tau_s] \rightarrow \R$ defined as $h_{a,j,s}(u_s) = h_{a,j,s}(u_s,0_{-s})$. Thus, $h_{a,j,s}(u_s)$  is a section of function $h_{a,j}$ that sets the components in the complement of $s$ equal to zero and varies only along the variables in $u_s$.

 Following \cite{gill1995inefficient}, $\logit h_{a,j}$ can be represented as
 \begin{align}
\text{logit } h_{a,j}(\bar z_j):&=\text{logit } h_{a,j}(0)+\sum_{s \subset\{1,\ldots,d_j\}} \int_{0_s}^{\bar{z}_{j,s}} d\logit h_{a,j,s}(u_{s}) \nonumber \\
      &=\text{logit } h_{a,j}(0)+\sum_{s \subset\{1,\ldots,d_j\}} \int_{0_s}^{\tau_s} I(u_{s} \leq \bar z_{j,s})d\logit h_{a,j,s}(u_{s}). \label{eq:hal1}
\end{align}
 Let   $\phi_{s,i}(c_s)= I(\bar z_{j,s,i} \leq c_s)$ where $\bar z_{j,s,i} $ are support points of $dh_{a,j,s}$. The representation (\ref{eq:hal1}) can be approximated
$
 \text{logit } h_{a,j,\beta} = \beta^a_{j,0}+\sum_{s \subset\{1,\ldots,d_j\}}\sum_{i=1}^{n} \beta^a_{j,s,i} \phi_{s,i},
$ 
where $ |\beta^a_{j,0}|+\sum_{s \subset\{1,\ldots,d_j\}}\sum_{i=1}^{n} |\beta^a_{j,s,i}|$ is an approximation of the sectional variation norm of $\logit h_{a,j}$. The highly adaptive lasso defines a minimum loss based estimator $\hat \beta^a_j$  as
\[
\hat \beta^a_j= \arg \min_{\beta: |\beta^a_{j,0}|+\sum_{s \in \mathcal{T}}\sum_{i=1}^{n} |\beta^a_{j,s,i}|<\lambda} \mathbb{P}_n L(\text{logit } h_{a,j,\beta}),
\]
where $L(\cdot)$ is the negative log-likelihood loss function and $\mathbb{P}_n$ is the empirical average. We denote the estimated treatment assignment model at time $j$ as $\hat h_{a,j,\hat \beta^a}\equiv \hat h_{a,j}$.
The sectional variation norm $\lambda$ (i.e., $l_1$-norm of the coefficient vector) is unknown and must be estimated using the observed data. The finite sectional variation norm assumption implies that there is a positive constant $ 0<M<\infty$ such that $0<\lambda < M$. In practice, $\lambda$ is determined using cross-validation which is  justified theoretically by the oracle properties of cross-validation \citep{van2003unified, van2006oracle}. 
Similarly, we define a minimum loss based estimator $\hat \beta^c_j$  as
\[
\hat \beta^c_j= \arg \min_{\beta: |\beta^c_{j,0}|+\sum_{s \in \mathcal{T}}\sum_{i=1}^{n} |\beta^c_{j,s,i}|<\lambda} \mathbb{P}_n L(\text{logit } h_{c,j,\beta}).
\]
We denote the estimated discrete hazard function for censoring at time $j$ as $\hat h_{c,j,x,\hat \beta^c}\equiv \hat h_{c,j,x}$.

\subsection{Plots for Simulation Section} \label{app:addplot}
The first row of these figures shows the distribution of true RQAL of each estimated optimal regime compared to the true RQAL of the true optimal regime (the dashed line). The second row shows the estimated RQAL of the estimated optimal regime. The third row compares the treatment regime recommendation between the true optimal regime and the estimated optimal regime by computing the mis-classification rate (MR).

\begin{figure}[H]
\centering
\captionsetup{font=small}
  \includegraphics[width=1\textwidth]{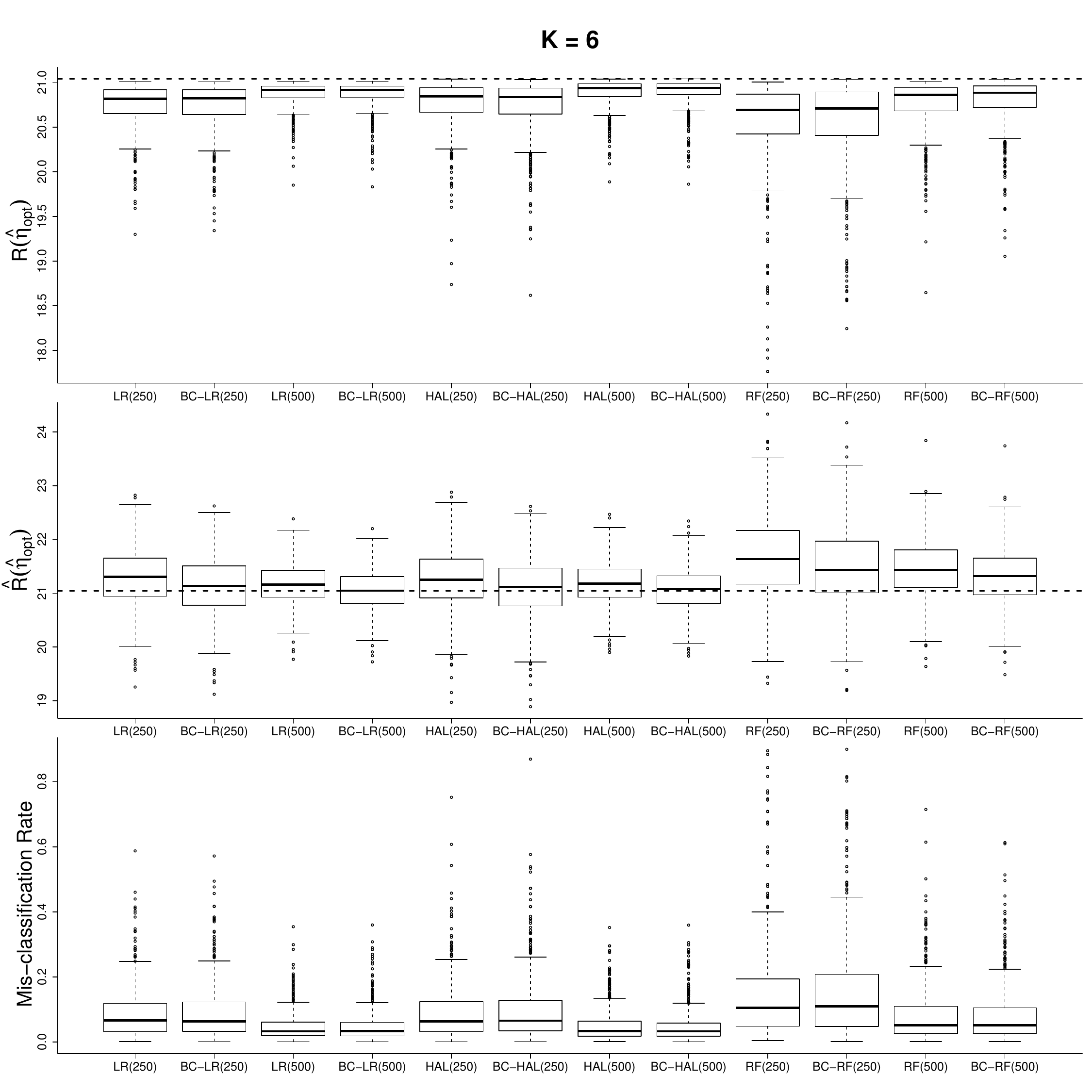}
   \caption{The simulation results of the estimated RQAL in scenario 1 with K =6. IPW is the estimator \eqref{eq:rqaldirect} while BC-IPW is the bias-corrected estimator \eqref{eq:rqalsmooth}. The number in brackets is the sample size. }
  \label{fg:s1}
\end{figure}

\begin{figure}[H]
\centering
\captionsetup{font=small}
  \includegraphics[width=1\textwidth]{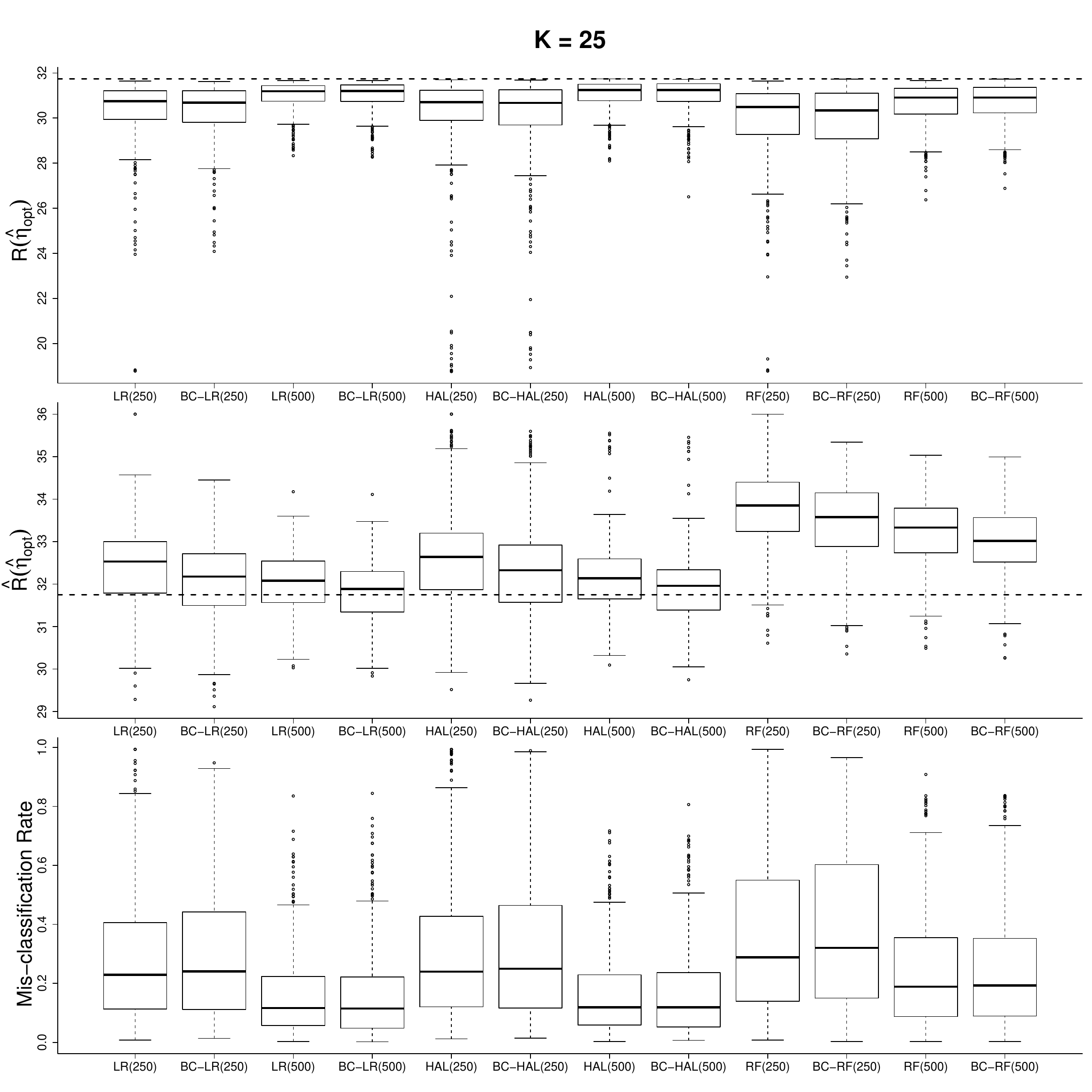}
   \caption{The simulation results of the estimated RQAL in scenario 1 with K = 25. IPW is the estimator \eqref{eq:rqaldirect} while BC-IPW is the bias-corrected estimator \eqref{eq:rqalsmooth}. The number in brackets is the sample size. }
  \label{fg:s2}
\end{figure}

\begin{figure}[H]
\centering
\captionsetup{font=small}
  \includegraphics[width=1\textwidth]{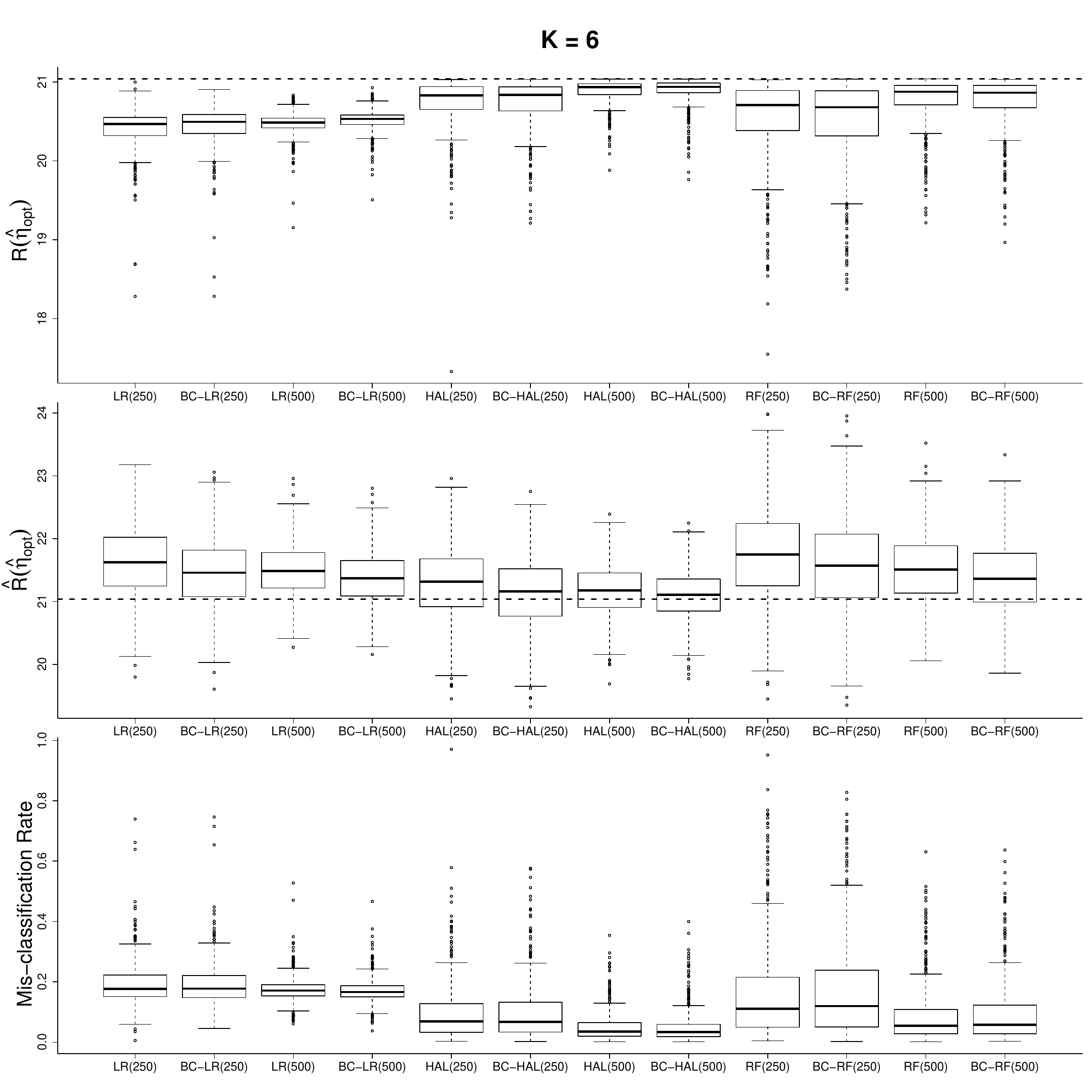}
   \caption{The simulation results of the estimated RQAL in scenario 2 with K = 6. IPW is the estimator \eqref{eq:rqaldirect} while BC-IPW is the bias-corrected estimator \eqref{eq:rqalsmooth}. The number in brackets is the sample size. }
  \label{fg:s3}
\end{figure}

\begin{figure}[H]
\centering
\captionsetup{font=small}
  \includegraphics[width=1\textwidth]{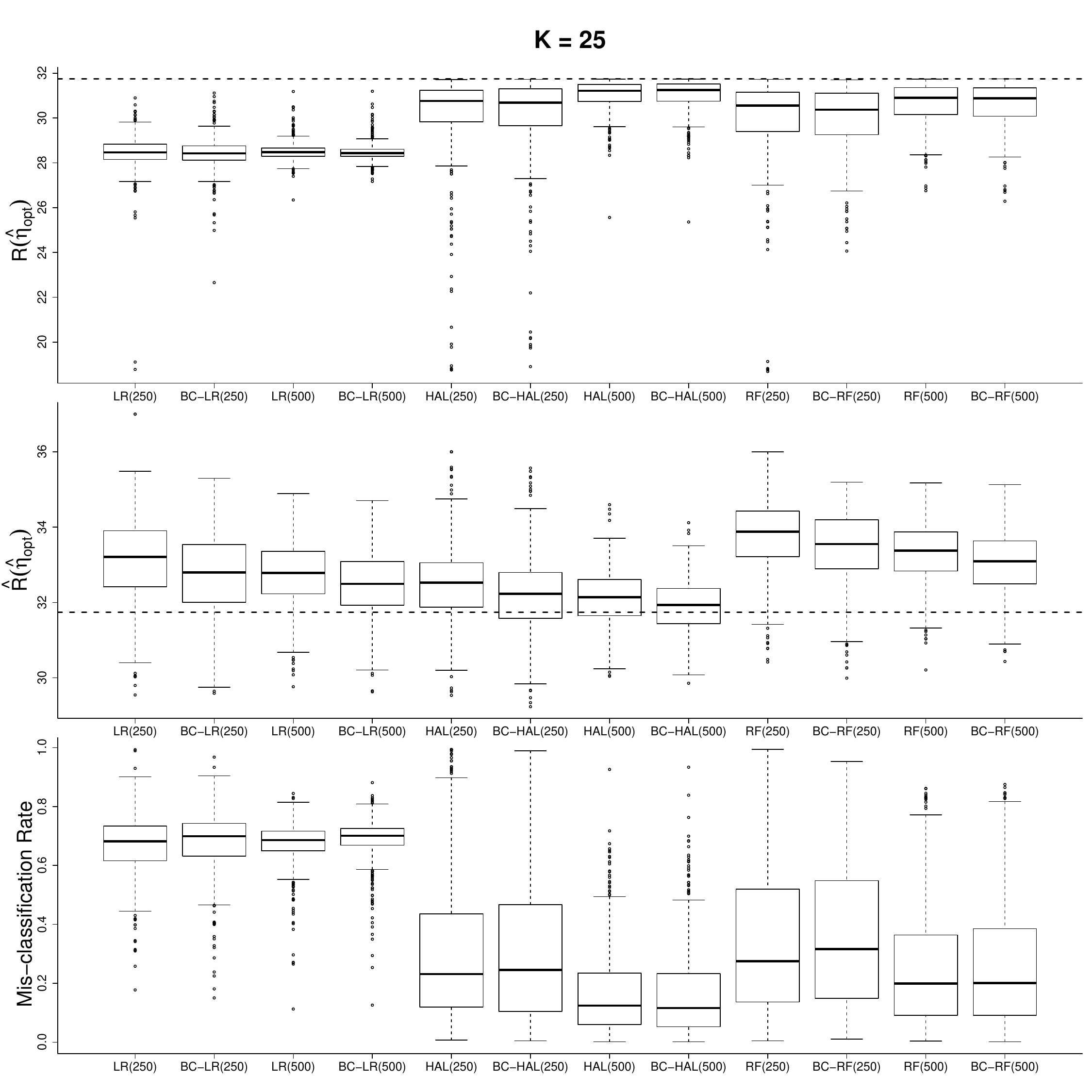}
   \caption{The simulation results of the estimated RQAL in scenario 2 with K =25. IPW is the estimator \eqref{eq:rqaldirect} while BC-IPW is the bias-corrected estimator \eqref{eq:rqalsmooth}. The number in brackets is the sample size. }
  \label{fg:s4}
\end{figure}

\subsection{ Simulation Results for Scenario 1} \label{app-scenario1}

  In the first set of simulation, we set $L = 60$ and $G = 10$. The mean observed survival time is around 32.4 weeks; the mean quality-adjusted lifetime is around 18.4 weeks. In this setting, $\sim$66\% patients initialize the treatment in the follow-up, and $\sim$29\% of patients  follow the optimal regime $g^{\eta_{opt}}$ exactly. We choose $L_U = 26$ weeks as the upper bound of restricted quality-adjusted lifetime for identifiability which lies in the 90\% quantile of the observed quality-adjusted lifetime and the theoretical $R(\eta_{opt})$ = 21.04 weeks in this case.  
  
 In the second set of simulation, we set $L = 100$ and $G = 4$. The mean observed survival time is around 47 weeks; the mean quality-adjusted lifetime is around 25.2 weeks. In this setting,  $\sim$55\% patients initialize the treatment in the follow-up, and $\sim$20\% of patients follow the optimal regime $g^{\eta_{opt}}$ exactly. We choose $L_U = 36$ weeks as the upper bound of restricted quality-adjusted lifetime which lies in the 90\% quantile of the observed quality-adjusted lifetime and the true $R(\eta_{opt})$ = 31.74 weeks in this case. 

 \begin{remark}
    To simplify the presentation of different estimators in the tables, we use the notation $\breve R$ and $\breve \eta$  to represent the estimated quantities corresponding to the methods specified in each row. For example, in Table \ref{App:tb:4.1},  $\breve R(\breve \eta_{opt})$ in row 2 denotes $\hat R^{npar}(\hat \eta^{npar}_{opt})$, whereas in row 4, it denotes $\tilde R^{par}(\tilde \eta^{par}_{opt})$.
\end{remark}

 {\bf Table \ref{App:tb:4.1} Specifications:} The columns $(\eta_0, \eta_1, \eta_2)$ correspond to the average of estimated parameters used to form the optimal strategy and the numbers in parenthesis are the empirical standard errors. IPW is the direct estimation using \eqref{eq:rqaldirect}, BC-IPW is the bias-correction estimator \eqref{eq:rqalsmooth}; Logit, logistic regression, HAL, undersmoothed highly adaptive lasso, RF, random forest; $K$ is the number of stages; $n$ is the sample size; $\eta_0$, $\eta_1$, $\eta_2$ are the empirical average of $\breve{\eta}_0/\breve{\eta}_1$, $\breve{\eta}_1/\breve{\eta}_2$, $\breve{\eta}_2/\breve{\eta}_0$; $\breve{R}(\breve{\eta}_{opt})$ is the empirical average of  estimated RQAL of the estimated optimal regime. MR is the empirical misclassification rate of the estimated optimal regime.

 The misclassification rates of both highly adaptive lasso and logistic regression are also very close. Among the methods considered, the random forest demonstrates the poorest performance in both bias and empirical standard errors. Notably, the conservative standard error presented at the end of Section \ref{sec:nonparm} underestimates the empirical standard errors of $\hat{R}(\hat{\eta}_{opt})$, as indicated in parentheses, resulting in significant undercoverage (Figure \ref{fg:4.1}). The misclassification rates are also substantially higher than the other estimators. 

 Figures \ref{fg:s1} and \ref{fg:s2} in Section \ref{app:addplot} provide additional information about the estimators for $K=6$ and $K=25$, respectively. Specifically, it provides insight about the distribution of (a) the true restricted quality-adjusted lifetime under different estimated optimal regimes (i.e., $R(\hat{\eta}_{opt})$ and $R(\tilde{\eta}_{opt})$); (b)  the estimated restricted quality-adjusted lifetime under the estimated optimal regime (i.e., $\hat R(\hat{\eta}_{opt})$ and $\tilde R(\tilde{\eta}_{opt})$); and, (c) the misclassification rates. Overall the plots show the superiority of the correctly specified logistic regression and the highly adaptive lasso compared with random forest. Also, there is no evidence of deviating from normal distribution for $\hat{R}(\hat{\eta}_{opt})$, $\tilde{R}(\tilde{\eta}_{opt})$, ${R}(\hat{\eta}_{opt})$ and ${R}(\tilde{\eta}_{opt})$ which confirms our theoretical results in Theorems \ref{ch3-theorem2} and \ref{ch3-theorem3}.

\begin{table}[htbp!]
\centering
\captionsetup{font=small}
\caption{Summary table of simulation studies in Scenario 1.  The target parameter here is $R(\eta_{opt}) = 21.04$ when $K = 6$ and $R(\eta_{opt}) = 31.74$ when $K = 25$; SE is the standard error estimation; CP is the average coverage probability; $R(\hat{\eta}_{opt})$ is the average of true RQAL of the estimated optimal regime. The numbers in brackets are the empirical standard deviation. } \label{App:tb:4.1}
\scalebox{0.8}{
\begin{tabular}{ll|l|l|llllllll}
\hline
                       &       & K                & n                    & $\eta_0$ & $\eta_1$ & $\eta_2$ & $\hat{R}(\hat{\eta}_{opt})$ & SE    & CP    & $R(\hat{\eta}_{opt})$ & MR         \\ \hline
\multirow{3}{*}{IPW}   & Logit & \multirow{12}{*}{6}  & \multirow{6}{*}{250} & 1.03(0.16)&1.03(0.28)&1.00(0.13)   & 21.30(0.55)      & 0.55   & 0.90  & 20.74(0.25)           & 9.12(8.26)    \\
                       & HAL   &                      &                      & {1.02(0.16)}&{1.05(0.32)}&{0.99(0.14)}   & {21.29(0.58)} & {0.55}   &{0.90}  &  {20.70(0.29)}                    &  {10.42(9.35)}  \\
                       & RF    &                      &                      & 1.04(0.30)&1.03(0.51)&1.01(0.24)   & 21.67(0.75)      & 0.67    & 0.80   & 20.55(0.49)           & 14.88(15.15) \\ \cline{5-12} 
\multirow{3}{*}{BC-IPW} & Logit &                     &                      & 1.02(0.17)&1.06(0.36)&0.99(0.15)   & 21.14(0.55)      & 0.55   &  0.94 & 20.73(0.27)            & 9.31(8.68)   \\
                       & HAL   &                      &                      & {1.02(0.19)}&{1.06(0.36)}&{1.00(0.16)}   & {20.93(0.52)}      & {0.56}  & {0.97}  & {20.70(0.29)}                    &  {10.46(9.54)}  \\
                       & RF    &                      &                      & 1.03(0.37)&1.09(0.67)&1.03(0.39)   & 21.49(0.75)      & 0.75  & 0.84   & 20.55(0.49)                  & 15.71(15.45) \\ \cline{1-2} \cline{4-12} 
\multirow{3}{*}{IPW}   & Logit &                      & \multirow{6}{*}{500} & 1.00(0.08)&1.02(0.15)&0.99(0.07)   & 21.17(0.39)      & 0.39 & 0.92 & 20.87(0.14)                    &  4.87(4.69)  \\
                       & HAL   &                      &                      &{0.99(0.07)}&{1.03(0.16)}&{0.99(0.08)}   & {21.17(0.41)}     & {0.39} & {0.92} &      {20.85(0.16)}                & {5.39(5.15)}    \\
                       & RF    &                      &                      & 1.01(0.15)&1.05(0.30)&0.99(0.13)   & 21.44(0.54)      & 0.50  & 0.86      & 20.76(0.28)          & 8.42(9.15)    \\ \cline{5-12} 
\multirow{3}{*}{BC-IPW} & Logit &                     &                      & 1.01(0.08)&1.02(0.17)&1.00(0.08)   & 21.06(0.39)      & 0.39 & 0.95 & 20.86(0.15)                     & 5.02(5.01)    \\
                       & HAL   &                      &                      & {1.01(0.09)}&{1.01(0.16)}&{1.00(0.08)}   & {20.92(0.37)}      & {0.40} & {0.95} &  {20.84(0.15)}                    &   {5.76(4.86)} \\
                       & RF    &                      &                      & 1.01(0.17)&1.07(0.36)&0.99(0.15)   & 21.31(0.54)      & 0.51  & 0.89   & 20.78(0.28)                   &   8.48(9.15)        \\ \hline
\multirow{3}{*}{IPW}   & Logit & \multirow{12}{*}{25} & \multirow{6}{*}{250} & 1.05(0.29)&1.06(0.45)&1.00(0.19)   & 32.40(0.90)      & 0.86 & 0.82 & 30.33(1.47)                    &  28.66(21.65)\\
                       & HAL   &                      &                      & {1.05(0.37)}&{1.10(0.65)}&{0.99(0.27)}   & {32.37(0.83)}      & {0.88} & {0.85} &              {30.14(1.39)}      & {33.09(24.18)} \\
                       & RF    &                      &                      & 1.10(0.53)&1.18(1.34)&1.02(0.30)   & 33.74(0.84)      & 0.72  & 0.28      & 29.95(1.74)          & 35.66(25.54)  \\ \cline{5-12} 
\multirow{3}{*}{BC-IPW} & Logit &                     &                      & 1.06(0.34)&1.08(0.55)&1.00(0.22)   & 32.11(0.88)      & 0.88 & 0.87 &  30.34(1.20)                 & 29.62(22.35) \\
                       & HAL   &                      &                      & {1.07(0.44)}&{1.06(0.58)}&{1.01(0.27)}   & {31.59(0.86)}      & {0.92}& {0.95} &  {30.26(1.06)}               & {32.53(22.61)} \\
                       & RF    &                      &                      & 1.01(0.84)&1.02(1.76)&1.01(0.39)   & 33.46(0.90)      & 0.77  & 0.39   &  29.94(1.51)                    &  37.70(26.13) \\ \cline{1-2} \cline{4-12} 
\multirow{3}{*}{IPW}   & Logit &                      & \multirow{6}{*}{500} & 1.01(0.10)&1.01(0.19)&1.00(0.09)   & 32.05(0.68)      & 0.71 & 0.89 & 31.01(0.60)                 &  15.92(13.85)\\
                       & HAL   &                      &                      & {1.02(0.14)}&{1.02(0.23)}&{1.00(0.12)}   & {32.07(0.71)}      & {0.69} &      {0.86}             & {30.89(0.75)}                 & {18.44(16.90)} \\
                       & RF    &                      &                      & 1.06(0.30)&1.00(0.39)&1.03(0.19)   & 33.25(0.77)      & 0.74  & 0.44       & 30.62(0.92)          & 24.63(20.23)  \\ \cline{5-12} 
\multirow{3}{*}{BC-IPW} & Logit &                     &                      & 1.01(0.12)&1.02(0.23)&1.00(0.11)   &  31.83(0.68)             & 0.72 & 0.94 & 30.99(0.65)                    & 16.37(15.01) \\
                       & HAL   &                      &                      & {1.01(0.11)}&{1.02(0.21)}&{1.00(0.10)}   & {31.47(0.71)} & {0.73} & {0.94} &  {30.93(0.66)}       & {17.64(15.11)} \\
                       & RF    &                      &                      & 1.06(0.30)&1.03(0.44)&1.02(0.21)   & 32.98(0.79)     & 0.77  & 0.57   &      30.65(0.93)               & 25.02(20.67) \\ \hline
\end{tabular}}
\end{table}

\subsection{Additional Simulation Results for Undersmoothed HAL} \label{app-sim}
In this section, we show the necessity of using undersmoothed HAL when modeling nuisance parameters by comparing with cross-validation (CV)-HAL. 
\subsubsection{One Stage Study}
In this example, we consider a one stage study with Gaussian outcome, and the purpose is to estimate $Y(1)$ which is 0.
\begin{itemize}
    \item \emph{Outcome model:} $Y = \alpha_1Z_1 + \alpha_2Z_2 + \epsilon$.
    \item \emph{Treatment assignment model:} $\mathbb{P}(A = 1 \mid Z_1, Z_2) = \exp(-\beta_1 Z_1 - \beta_2 Z_2)^{-1}$.
\end{itemize}
We generate the data as follows:
\begin{itemize}
    \item[](1) Generate $Z_1$, $Z_2 \sim N(0, 0.5)$, and the error term $\epsilon \sim N(0, 0.5)$.
    \item[](2) Generate the treatment variable $A$ by the treatment assignment model.
    \item[](3) Generate the outcome variable $Y$ by the outcome model.
    \item[](4) Set the observed covariates as $X_1 = \exp(Z_1/2)$ and $X_2 = (Z_1 + Z_2)^2$.
    \item[](5) The observed data is $(X_1, X_2, Y, A)$.
\end{itemize}
Here, $\alpha = (\alpha_1, \alpha_2) = (1, 1)$ and $\beta = (\beta_1, \beta_2) = (-1.5, 1.5)$. The PS model would be mis-specified if we model treatment assignment by logistic regression and the observed covariates. In Table~\ref{tb:appendix-sim} , we show the results of IPW estimators by using CV-HAL and undersmooth HAL for the PS model with sample size $n = 500$.
The standard error estimation for logistic regression is calculated by sandwich formula.

\subsubsection{Multi Stage Study}
In this example, we consider a $K = 6$ stage study which is similar to the example in Section~\ref{ch3-sec:simu} and the outcome of interest is RQAL $R(\eta_{opt})$. The optimal treatment regime is $\eta_{opt} = (1, -1, -1)$. We ignore the censoring part and the estimation of $\eta_{opt}$ for simplicity, and assume the optimal treatment regime is known. The purpose is to evaluate the performance of the IPW estimator \eqref{eq:rqaldirect} under CV-HAL and undersmooth HAL for the PS model. We still consider the case when PS model is mis-specified by logistic regression with observed covariates. The covariates $Z_j = (Z_{j1}, Z_{j2}, Z_{j3})$ are generated as described in Section~\ref{ch3-sec:simu}.
\begin{itemize}
    \item \textit{Treatment assignment model}: $h_{a, j}(\bar{z}) = logit^{-1}(\kappa_0 + \kappa_1 \times  z_{1j} + \kappa_2 \times z_{2j})$
\end{itemize}
where $(\kappa_0, \kappa_1, \kappa_2) = (- 0.1 \times K, -1, -1)$ and the observed covariates are $X_j = \{X_{j1}, X_{j2}, X_{j3}\} = \{\exp(Z_{j1}/2), (Z_{j1} + Z_{j2})^2, Z_{j3}\}$. We generate the survival time with the following discrete hazard model
\begin{align*}
    h_{y}(t, \bar{a}, \bar{z}) =&  pr(T = t | T > t - 1, \bar{A}_j = \bar{a}_j, \bar{C}_j = 0, \bar{Z}_j = \bar{z})\\
    =&  \text{logit}^{-1}\{-5 + t \times (6 \times 0.75)/K -0.5 \times z_{1j} -0.5 \times z_{2j} -0.5 \times a_j \times W_j\}\\
	& \text{for} \quad t \in (jG, (j+1)G)], \quad j = 0,..., K-1. 
\end{align*}
We then the define the utility map as $\mathcal{Q}(Z_{3j}) = Z_{3j}/\max_{k = 0, .., K} Z_{3k}$ which is always a number between (0, 1]. Patients' quality of life scores will reduce to zero if they are no longer at-risk. The upper limit for RQAL is selected as $L_U = 25$ by picking the 90\% quantile of observed QAL and $R(\eta_{opt}) = 20.73$ in this case. We use $n = 500$ in this example. See Table~\ref{tb:appendix-sim} of the results.

\begin{table}[htb]
\centering
\captionsetup{font=small}
\caption{Summary table to compare CV-HAL and undersmoothed HAL. The results are averaged over 500 Monte Carlo data sets. SD is the empirical standard error, SE is the averaged standard error estimation, CP is the averaged coverage probability, and $\lambda$ is the averaged tuning parameter in $l_1$ regression.} \label{tb:appendix-sim}
\scalebox{1}{
\begin{tabular}{lcccccc}
\hline
Example                      & Method    & Bias   & SD    & SE    & CP (\%) & $\lambda$\\ \hline
\multirow{3}{*}{One-stage}   & Logit     & 0.171  & 0.065 & 0.066 & 11       & NA\\
                             & CV-HAL    & 0.085  & 0.048 & 0.055 & 69       & 0.025\\
                             & Under-HAL & 0.006  & 0.054 & 0.054 & 94       & 0.001\\ \hline
\multirow{3}{*}{Multi-stage} & Logit     & 0.388  & 0.365 & 0.361 & 84       & NA\\
                             & CV-HAL    & 0.245  & 0.365 & 0.350 & 85       & 0.024\\
                             & Under-HAL & 0.021  & 0.387 & 0.364 & 94       & 0.007 \\ \hline
\end{tabular}}
\end{table}

\section{Additional Tables for the real data example} \label{app:addtable}

Table~\ref{app:tb:logit} shows the estimated coefficients of the measured variables on the probability of PEG tube insertion and censoring on logit scale.  Based on these results, the ALS bulbar and FVC are the most significant variables in deciding for PEG placement such that patients with higher values of bulbar and FVC are less likely to receive PEG. Also, the stage (i.e., number of months since the beginning of the follow-up), site of onset, Ceftriaxone, and ALS total score contribute significantly in hazard of censoring.

\begin{table}[t]
\centering
\captionsetup{font=small}
\caption{Summary table of logistic regression for PEG insertion and censoring. Stage represents the number of months since the beginning of the follow-up. Stage$^2$ and Stage$^3$ and the  square and cubic of Stage.} \label{app:tb:logit}
\scalebox{0.9}{
\begin{tabular}{lllllll}
\hline
                  & \multicolumn{3}{l}{PEG insertion}                    & \multicolumn{3}{l}{Censoring}     \\ \hline
                  & Est.   & S.E.   & \multicolumn{1}{l|}{p-val}            & Est.   & S.E.  & p-val            \\
(Intercept)       & 1.093  & 1.010  & \multicolumn{1}{l|}{0.279}            & -5.827 & 0.907 & \textless{}0.001 \\
Stage             & -0.054 & 0.098  & \multicolumn{1}{l|}{0.584}            & 0.849  & 0.131 & \textless{}0.001 \\
Stage$^2$            & -0.002 & 0.009  & \multicolumn{1}{l|}{0.781}            & -0.048 & 0.009 & \textless{}0.001 \\
Stage$^3$            & 0.000  & 0.000  & \multicolumn{1}{l|}{0.584}            & 0.001  & 0.000 & \textless{}0.001 \\
Sex               & -0.099 & 0.187  & \multicolumn{1}{l|}{0.595}            & 0.048  & 0.138 & 0.727         \\
Age               & 0.013  & 0.009  & \multicolumn{1}{l|}{0.131}            & -0.000 & 0.006 & 0.984            \\
Race              & -0.396 & 0.404  & \multicolumn{1}{l|}{0.327}            & 0.097  & 0.265 & 0.715            \\
Siteofonset       & 0.480  & 0.244  & \multicolumn{1}{l|}{0.049}            & 0.576  & 0.198 & 0.004            \\
Duration          & -0.007 & 0.014  & \multicolumn{1}{l|}{0.584}            & 0.012  & 0.009 & 0.189            \\
Ceftriaxone       & -0.340 & 0.172  & \multicolumn{1}{l|}{0.048}            & -0.292 & 0.124 & 0.019            \\
Riluzole          & 0.327  & 0.201  & \multicolumn{1}{l|}{0.105}            & -0.266 & 0.136 & 0.051            \\
BMI               & 0.002  & 0.016 & \multicolumn{1}{l|}{0.888}            & -0.011 & 0.012 & 0.361            \\
ALS total        & -0.03  & 0.042  & \multicolumn{1}{l|}{0.446}            & -0.074 & 0.029 & 0.010            \\
ALS bulbar       & -0.291 & 0.061  & \multicolumn{1}{l|}{\textless{}0.001} & 0.065  & 0.041 & 0.118            \\
ALS breathing    & -0.029 & 0.059  & \multicolumn{1}{l|}{0.627}            & 0.042  & 0.041 & 0.303            \\
ALS gross motor & -0.040 & 0.070  & \multicolumn{1}{l|}{0.56}             & -0.023 & 0.046 & 0.627            \\
FVC               & -0.016 & 0.004  & \multicolumn{1}{l|}{\textless 0.001}  & 0.004  & 0.003 & 0.156            \\
BMI change       & -0.012 & 0.010  & \multicolumn{1}{l|}{0.221}            & -0.000 & 0.005 & 0.967            \\
Grip              & -0.004 & 0.006  & \multicolumn{1}{l|}{0.460}            & 0.003  & 0.004 & 0.519           \\
ALSQOL            & 0.049  & 0.067  & \multicolumn{1}{l|}{0.465}            & -0.03  & 0.046 & 0.485            \\ \hline
\end{tabular}}
\end{table}

\end{document}